\theoremstyle{definition}
\newtheorem{theorem}{Theorem}[section]
\theoremstyle{definition}
\newtheorem{lemma}{Lemma}[section]
\theoremstyle{definition}
\theoremstyle{definition}
\newtheorem* {notation}{Notation}
\theoremstyle{definition}
\newtheorem{proposition}{Proposition}[section]
\newtheorem{corollary}{Corollary}[section]
\newtheorem* {remark}{Remark}
\newtheorem{example}{Example}[section]
\theoremstyle{definition}
\theoremstyle{definition}
\numberwithin{equation}{section}
\newcommand{\Mat}{\mathrm{Mat}} 
\def\({\left(}
\def\){\right)}
  \newcommand{\sP}{\mathscr{S}} \newcommand{\FF}{\mathbb{F}}  \newcommand{\CC}{\mathbb{C}}     \newcommand{\cP}{\mathcal{P}} \newcommand{\cA}{\mathcal{A}}
\newcommand{\cQ}{\mathcal{Q}} \newcommand{\cK}{\mathcal{K}} \newcommand{\cO}{\mathcal{O}} \newcommand{\cC}{\mathcal{C}}
\newcommand{\cR}{\mathcal{R}}  \newcommand{\cS}{\mathcal{S}}\newcommand{\cT}{\mathcal{T}} 
\def\NN{\mathbb{N}}
    \def\ZZ{\mathbb{Z}}   \def\H{\mathcal{H}} \def\Ind{\mathrm{Ind}} \def\GL{\mathrm{GL}}       \def\spanning{\textnormal{-span}}   
\def\Irr{\mathrm{Irr}}  \def\wt{\widetilde}
   \newcommand{\fkn}{\mathfrak{n}}\newcommand{\supp}{\mathrm{supp}}
\newcommand{\One}{{1\hspace{-.14cm} 1}}
\newcommand{\SInd}{\mathrm{SInd}}
\newcommand{\fka}{\mathfrak{a}}
\newcommand{\fkh}{\mathfrak{h}}
\def\fk{\mathfrak}
\def\fkm{\mathfrak{m}}
\def\barr{\begin{array}}
\def\earr{\end{array}}
\def\ba{\begin{aligned}}
\def\ea{\end{aligned}}
\def\be{\begin{equation}}
\def\ee{\end{equation}}
\def\cS{\mathcal{S}}
\renewcommand{\@makefnmark}{\mbox{\textsuperscript{}}}
\begin{document}
\title{A supercharacter analogue for normality}
\author{Eric Marberg\footnote{This research was conducted with government support under
the Department of Defense, Air Force Office of Scientific Research, National Defense Science
and Engineering Graduate (NDSEG) Fellowship, 32 CFR 168a.} \\ Department of Mathematics \\ Massachusetts Institute of Technology, United States \\ \tt{emarberg@math.mit.edu}}
\date{}

\maketitle

\begin{abstract}
Diaconis and Isaacs  define in \cite{DI06} a \emph{supercharacter theory} for algebra groups over a finite field by constructing certain unions of conjugacy classes called \emph{superclasses} and certain reducible characters called \emph{supercharacters}.  This work investigates the properties of algebra subgroups $H\subset G$ which are unions of some set of the superclasses of $G$; we call such subgroups \emph{supernormal}.  After giving a few useful equivalent formulations of this definition, we show that products of supernormal subgroups are supernormal and that all normal pattern subgroups are supernormal.  We then classify the set of supernormal subgroups of $U_n(q)$, the group of unipotent upper triangular matrices over the finite field $\FF_q$, and provide a formula for the number of such subgroups when $q$ is prime.   Following this, we give supercharacter analogues for Clifford's theorem and Mackey's ``method of little groups.''  Specifically, we show that a supercharacter restricted to a supernormal subgroup decomposes as a sum of supercharacters with the same degree and multiplicity.  We then describe how the supercharacters of an algebra group of the form $U_\fkn = U_\fkh \ltimes U_\fka$, where $U_\fka$ is supernormal and $\fka^2=0$, are parametrized by $U_\fkh$-orbits of the supercharacters of $U_\fka$ and the supercharacters of the stabilizer subgroups of these orbits.  

\end{abstract}

\section{Introduction}

%Consider the group $U_n(q)$ of $n\times n$ unipotent upper triangular matrices over a finite field $\FF_q$.  

Classifying the irreducible representations of $U_n(q)$, the group of $n\times n$ unipotent upper triangular matrices over a finite field $\FF_q$, is a well-known wild problem, provably intractable for arbitrary $n$.  Despite this, C. Andr\'e discovered a natural way of constructing certain sums of irreducible characters and certain unions of conjugacy classes of the group, which together form a useful approximation to its representation theory \cite{An95, An99, An01, An02}. In his 
PhD thesis \cite{Ya01}, N. Yan showed how to replace Andr\'e's construction with more elementary methods. This simplified theory proved to have both useful applications and a natural generalization.  For example, Arias-Castro, Diaconis, and Stanley 
\cite{ADS04} employed Yan's work in place of the usual irreducible character theory to study random walks on $U_n(q)$.

Later, Diaconis and Isaacs \cite{DI06} axiomatized the approximating approach to define the notion of a supercharacter theory for a finite group, in which supercharacters replace irreducible characters and superclasses replace conjugacy classes.  In addition, they generalized Andr\'e's original construction to define a supercharacter theory for algebra groups, a family of groups of the form $U_\fkn = \{1 + X : X \in \fkn \}$ where $\fkn$ is a nilpotent $\FF_q$-algebra.  The characters in this theory share many formal properties with the irreducible characters of a finite group.  For example, supercharacters are orthogonal with respect to the usual inner product on class functions and decompose the character of the regular representation.  Furthermore, restrictions and tensor products of supercharacters decompose as linear combinations of supercharacters with nonnegative integer coefficients, and there is a notion of superinduction that is adjoint to restriction on the space of superclass functions.  \cite{T09-2, MT09, T09, TV09} study these aspects of Diaconis and Isaacs' supercharacter theory in detail.

This work investigates a further analogy between supercharacters of algebra groups and irreducible characters of an arbitrary finite group, specifically with regard to normal subgroups.  The irreducible characters of a finite group are constant on conjugacy classes, and a subgroup which is a union of conjugacy classes is called \emph{normal}.  Mirroring this definition, we say that a subgroup of an algebra group is \emph{supernormal} if it is a union of superclasses.  A basic result in character theory states the we can define normal subgroups in another way: namely, as the subgroups of the form $\ker \chi_1 \cap \dots\cap \ker \chi_n$ for some set of irreducible characters $\chi_1,\dots,\chi_n$.  Thus, normal subgroups are the subgroups determined by the character table of a group.  We prove that supernormal subgroups  are likewise given by intersections of kernels of supercharacters, and thus are in the same way the subgroups determined by the supercharacter table of an algebra group.  

The goal of this paper is to investigate how this analogy between irreducible characters/normal subgroups and supercharacters/supernormal subgroups continues.  In particular, we study in depth how the restriction of a supercharacter to a supernormal subgroup parallels the restriction of an irreducible character to a normal subgroup.  Likewise, we present a supercharacter analogue for the classical result describing the irreducible characters of a semidirect product $G = H\ltimes A$ with an abelian, normal subgroup.  
 In the process of these investigations, we also study the properties of supernormal subgroups in their own right.  We show in particular that supernormal algebra subgroups correspond to two-sided ideals in the ambient nilpotent $\FF_q$-algebra, and we use this property to classify all supernormal subgroups of $U_n(q)$ when $q$ is prime.
 
 A more detailed outline of our results goes as follows.  Section \ref{prelim-sect} discusses the concept of a supercharacter theory of a finite group, and introduces the corresponding notion of a \emph{supernormal} subgroup.  The section continues with additional background material, including the definitions of algebra groups and pattern groups, and the particular supercharacter theory introduced by Diaconis and Isaacs \cite{DI06} for these groups.  We conclude these preliminaries by briefly discussing the restriction and superinduction of supercharacters of algebra groups.  

In Section \ref{supernormal1-sect} we investigate the supernormal subgroups of algebra groups.  We provide a useful characterization of such subgroups, showing that when $\fkn$ is an algebra over a field of prime order, the supernormal subgroups of $U_\fkn$ correspond to two-sided ideals in $\fkn$.  We also show that products of supernormal subgroups of algebra groups are supernormal, and define the lift of a supercharacter from a supernormal subgroup.  

Section \ref{supernormal2-sect} specializes this discussion to the case of pattern groups, a family of algebra groups defined by partial orderings.  We classify all normal pattern subgroups of a pattern group, and show that these subgroups are always supernormal.  In addition, we provide a combinatorial classification of all supernormal subgroups of $U_n(q)$, showing the number of such subgroups to be
\[ \sum_{0\leq i \leq j \leq k < n}  \frac{(-1)^{k-j}}{n} \binom{n}{k+1} \binom{n}{k}\binom{k}{j} \binom{j}{i}_q\] when $q$ is prime.  This formula also counts the number of two-sided ideals in the algebra of strictly upper triangular $n\times n$ matrices over $\FF_q$.

Section \ref{restrict-sect} discusses the restriction of a supercharacter to supernormal subgroup.  We prove a supercharacter version of Clifford's theorem on the restriction of an irreducible character to a normal subgroup, and also provide supercharacter analogues for results describing the restriction of an irreducible character to a normal subgroup of index two.  

Finally, in Section \ref{abelian-sect} we provide a supercharacter analogue for Mackey's ``method of little groups,'' which classifies the irreducible characters of a semidirect product of the form $G = H\ltimes A$ where $A$ is abelian and normal.  Specifically, we show how the supercharacters of an algebra group of the form $U_\fkn = U_\fkh \ltimes U_\fka$, where $U_\fka$ is supernormal and $\fka^2=0$, are parametrized by $U_\fkh$-orbits of the supercharacters of $U_\fka$ and the supercharacters of the stabilizer subgroups of these orbits.

\subsection*{Acknowledgements}

I  thank Nat Thiem for his helpful remarks and suggestions.

\section{Preliminaries}\label{prelim-sect}

This section presents the concept of a supercharacter theory and a supernormal subgroup, then defines algebra groups, pattern groups, and a specific supercharacter theory introduced by Diaconis and Isaacs in \cite{DI06}.  We also review the superclasses and supercharacters of the fundamental example $U_n(q)$, and discuss restriction and superinduction for algebra groups.

\subsection{Supercharacter Theories and Supernormal Subgroups} \label{abstract-sc}

Let $G$ be a finite group and write $\Irr(G)$ for the set of the group's (complex) irreducible characters.  A \emph{supercharacter theory} of $G$ is a set $\cS$ of characters of $G$ and a partition $\cS^\vee$ of the elements of $G$  satisfying the following conditions:
\begin{enumerate}
\item[(1)] $|\cS| = |\cS^\vee|$.  
\item[(2)] Each irreducible character of $G$ appears as a constituent of exactly one $\chi \in \cS$.
\item[(3)] Each $\chi \in \cS$ is constant on each set $\cK \in \cS$.
\item[(4)] The conjugacy class $\{1\} \in \cS^\vee$.
\end{enumerate} 
We call $\cS^\vee$ the set of \emph{superclasses} and $\cS$ the set of \emph{supercharacters} of the supercharacter theory $(\cS,\cS^\vee)$.  Each superclass is a union of conjugacy classes, and each supercharacter $\chi \in\cS$ is equal to a positive constant times  $\sum_{\psi  \in \Irr(G,\chi)} \psi(1) \psi$ where $\Irr(G,\chi)$ denotes the set of irreducible constituents of $\chi$ \cite[Lemma 2.1]{DI06}.  By condition (2), the sets $\Irr(G,\chi)$ for $\chi \in \cS$ form a partition of $\Irr(G)$, and consequently the supercharacters $\cS$ form an orthogonal basis for the space of \emph{superclass functions}, the complex valued functions on $G$ which are constant on the superclasses $\cS^\vee$.

Every finite group has two trivial supercharacter theories: the usual irreducible character theory and the supercharacter theory with $\cS = \{ \One, \rho_G - \One\}$ and $\cS^\vee = \{ \{1\}, G-\{1\}\}$, where $\rho_G$ denotes the character of the regular representation of $G$.    \cite{H09} discusses several methods of constructing additional supercharacter theories of an arbitrary finite group.

A subgroup of $G$ is normal if and only if it is the union of a set of conjugacy classes of $G$.  We have an equivalent characterization of normality in terms of the kernels of irreducible characters.  Recall that the kernel of a character $\chi$ of $G$ is the set $\ker \chi = \{ g \in G : \chi(g) = \chi(1)\}$.  This is just the kernel of any representation whose character is $\chi$, and so $\ker \chi$ is normal subgroup.  A subgroup of $G$ is normal if and only if it is the intersection of the kernels of some finite set of irreducible characters \cite[Proposition 17.5]{JL}; thus the normal subgroups of $G$ are the subgroups which we can construct from the character table of $G$.

We have a natural generalization of normality in an arbitrary supercharacter theory which preserves this property.  In particular, we say that a subgroup $H \subset G$ is \emph{supernormal} with respect to a supercharacter theory $(\cS,\cS^\vee)$ if $H$ is given by the union of a set of superclasses in $\cS^\vee$.  \cite{H09} calls such subgroups $(\cS,\cS^\vee)$-normal.  Then, as for the usual irreducible character theory of $G$, we have an alternate characterization in terms of the kernels of supercharacters.

\begin{proposition}\label{sc-prop}
Let $G$ be a finite group with a supercharacter theory $(\cS, \cS^\vee)$.  Then a subgroup $H\subset G$ is supernormal with respect to $(\cS, \cS^\vee)$ if and only if there exist supercharacters $\chi_1,\dots,\chi_n \in \cS$ with $H = \bigcap_{i=1}^n \ker \chi_i$.
\end{proposition}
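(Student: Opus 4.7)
My plan is as follows. The easy direction is immediate: if $H = \bigcap_{i=1}^n \ker\chi_i$, then each $\ker\chi_i = \chi_i^{-1}(\chi_i(1))$ is a union of superclasses (supercharacters are constant on superclasses by axiom (3)), and so is the intersection $H$.

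For the harder direction, the plan is to compute the permutation character $\varphi = \Ind_H^G \One$ of $G$ on the cosets $G/H$ in two different ways. Since any supernormal subgroup is normal, the standard argument---the $H$-invariants $V_\psi^H$ of any irreducible $V_\psi$ form a $G$-stable subspace and are therefore either $0$ or all of $V_\psi$---yields the first expression
\[
\varphi = \sum_{\psi \in \Irr(G),\; H \subset \ker\psi} \psi(1)\,\psi.
\]
On the other hand, $\varphi(g) = |G|/|H|$ if $g \in H$ and $\varphi(g) = 0$ otherwise, so the fact that $H$ is a union of superclasses makes $\varphi$ a superclass function. Expanding $\varphi$ in the supercharacter basis and substituting $\chi = c_\chi \sum_{\psi \in \Irr(G,\chi)} \psi(1)\,\psi$ produces a second expression
\[
\varphi = \sum_{\chi \in \cS} b_\chi \chi = \sum_{\chi \in \cS}\sum_{\psi \in \Irr(G,\chi)} b_\chi c_\chi\, \psi(1)\,\psi,
\]
with $b_\chi \in \CC$.

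Comparing coefficients of each $\psi \in \Irr(G)$ in these two expressions and using $\psi(1) > 0$ gives $b_{\chi_\psi} c_{\chi_\psi} = 1$ when $H \subset \ker\psi$ and $b_{\chi_\psi} c_{\chi_\psi} = 0$ otherwise, where $\chi_\psi \in \cS$ denotes the unique supercharacter containing $\psi$. Since the left-hand side depends only on $\chi_\psi$, the condition ``$H \subset \ker\psi$'' is constant on each block $\Irr(G, \chi)$: either every $\psi \in \Irr(G, \chi)$ has $H \subset \ker\psi$ (equivalently $H \subset \ker\chi = \bigcap_{\psi \in \Irr(G, \chi)} \ker\psi$), or none does. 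Setting $S = \{\psi \in \Irr(G) : H \subset \ker\psi\}$ and $T = \{\chi \in \cS : H \subset \ker\chi\}$, I then have $S = \bigsqcup_{\chi \in T}\Irr(G, \chi)$, and combining with the classical fact $H = \bigcap_{\psi \in S}\ker\psi$ for normal $H$ yields
\[
H = \bigcap_{\psi \in S}\ker\psi = \bigcap_{\chi \in T}\bigcap_{\psi \in \Irr(G,\chi)}\ker\psi = \bigcap_{\chi \in T}\ker\chi,
\]
as required.

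The main leap is recognizing that the double expression of $\varphi$ encodes precisely the compatibility between the supercharacter partition of $\Irr(G)$ and the set of irreducibles whose kernels contain $H$; once this observation is in place, the rest of the argument is forced by the uniqueness of expansion in each basis.
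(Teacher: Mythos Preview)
Your proof is correct and follows essentially the same approach as the paper. The permutation character $\varphi = \Ind_H^G \One$ you use is precisely the lift $\wt{\rho}_{G/H}$ of the regular character of $G/H$ that the paper employs; both arguments write this superclass function in the supercharacter basis and compare with its decomposition into irreducibles to conclude that the condition $H\subset\ker\psi$ is constant on each block $\Irr(G,\chi)$.
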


Thus the supernormal subgroups of $G$ with respect to an arbitrary supercharacter theory are those subgroups which can can construct from the supercharacter table of $G$$-$i.e., the table whose rows are indexed by $\cS$ and whose columns are indexed by $\cS^\vee$, and whose entries record the value of a supercharacter $\chi$ at a superclass $\cK$.

\begin{proof}[Proof of Proposition \ref{sc-prop}]
The map $G\rightarrow \CC^n$ given by $g\mapsto (\chi_1(g),\dots, \chi_n(g))$ is constant on superclasses, so its kernel, which is precisely the intersection $\bigcap_{i=1}^n \ker \chi_i$, is a union of superclasses and therefore a supernormal subgroup.  Conversely, suppose $H$ is an arbitrary supernormal subgroup.  Then $H$ is normal, so we can consider the quotient group $G/H$.  Given a character $\psi$ of $G/H$, let $\wt \psi$ denote its lift to $G$: this is the character of $G$ defined by $\wt \psi(g) = \psi(gH)$.  If $\psi$ is irreducible then $\wt \psi$ is irreducible, and $\ker \wt \psi \supset H$.  Let $\rho_{G/H}$ denote the character of the regular representation of $G/H$, and observe that since $H$ is supernormal, $\wt \rho_{G/H}$ is constant on the superclasses $\cS^\vee$.  Therefore for some constants $c_\chi \in \CC$ we have 
\[ \sum_{\chi \in \cS} c_\chi \chi = \wt \rho_{G/H} = \sum_{\psi \in \Irr(G/H)} \psi(1) \wt \psi.\]  Since the constituents of distinct supercharacters are disjoint, it follows from this equation that if $\psi \in \Irr(G/H)$ and $\chi \in \cS$ has $\wt \psi$ as a constituent, then every constituent of $\chi$ is a lift of an irreducible character of $G/H$, and so $\ker \chi \supset H$.  Enumerate the irreducible characters of $G/H$ as $\psi_1,\dots,\psi_s$ and for each $i$ let $\chi_i \in \cS$ be the unique supercharacter with $\wt \psi_i$ as a constituent.  Note that $\bigcap_{i=1}^s \ker \psi_i = \{H\} \subset G/H$ so $\bigcap_{i=1}^s \ker \wt \psi_i = H \subset G$.   
Now, since the kernel of $\chi_i$ is the intersection of the kernels of its constituents, we have $\bigcap_{i=1}^s \ker \chi_i \subset \bigcap_{i=1}^s \ker \wt \psi_i = H$.  On the other hand, $\ker \chi_i \supset H$ for all $i$, so $\bigcap_{i=1}^s \ker \chi_i = H$. 
\end{proof}

In this work, we study the supernormal subgroups of a particular supercharacter theory introduced by Diaconis and Isaacs \cite{DI06}.  Before introducing this supercharacter theory, we 
must define \emph{algebra groups,} the family of groups to which the theory applies. This is the goal of the next 
section.

\subsection{Algebra Groups}

Fix a finite field $\FF_q$ with $q$ elements and let $\fkn$ denote a nilpotent $\FF_q$-algebra.  In this work, all algebras are finite dimensional and associative, and are defined over a fixed ambient finite field $\FF_q$.  The \emph{algebra group} $U_\fkn$ corresponding to $\fkn$ is the group of formal sums $U_\fkn = \{ 1 + X : X \in \fkn \}$ with multiplication defined by 
\[ (1+X)(1+Y) = 1 + X + Y + XY,\qquad\text{for }X,Y\in \fkn.\] %This multiplication is associative because $\fkn$ is associative, and the element $1 = 1 + 0 \in U_\fkn$ provides a multiplicative identity.  The inverse of an element $1+X \in U_\fkn$ is 
%\[ (1+X)^{-1} = 1 + \sum_{n=1}^\infty (-X)^n \in U_\fkn.\]  Since $\fkn$ is nilpotent, the sum on the right hand side is finite and hence a well defined element of $\fkn$.
%
%\begin{proposition}
%Write $q = p^a$ where $p$ is prime and let $\fkn$ be a nilpotent $\FF_q$-algebra.   Then every subgroup $H$ of $U_\fkn$ is an algebra group over the subfield $\FF_p \subset \FF_q$, in the sense that the set $\{ X \in \fkn : 1+X \in H\}$ is a nilpotent $\FF_p$-algebra.
%\end{proposition}

%\begin{proof}
%Let $\fkh = \{ X \in \fkn : 1+X \in H\}$.  It suffices to show that this set is a vector space over $\FF_p$, since in this case we have
%\[ X,Y \in \fkh\ \Rightarrow\ (1+X)(1+Y) = 1 + X + Y + XY \in H\ \Rightarrow\ X+Y+XY \in \fkh\ \Rightarrow\ XY \in \fkh.\]  To this end, for each $X \in \fkh$ let $n(X)$ denote least positive integer such that $X^{n(X)} = 0$.  We now let $X,Y\in \fkh$ and induct on $n(X)$.  

%If $n(X) = 1$ then $X = 0$ so $tX =0 \in \fkh$ for all $t\in \FF_p$.  Assume $n(X) > 1$ and $\FF_p Y \subset \fkh$ for all $Y \in \fkh$ with $

%\end{proof}

The group $U_\fkn$ acts on $\fkn$ on the left and right by the formal multiplications
\[\label{fkn-action} (1+X)Y = Y +XY\qquad\text{and}\qquad Y(1+X) = Y + YX,\qquad\text{for }X,Y \in \fkn.\]  Let $\fkn^*$ denote the dual space of $\fkn$; i.e., the set of $\FF_q$-linear maps $\fkn \rightarrow \FF_q$.  Then we have analogous left and right actions of $U_\fkn$ on $\fkn^*$ given by defining $g\lambda$ and $\lambda g$ for $g \in U_\fkn$ and $\lambda \in \fkn^*$ to be the functionals with
\[\label{star-action} g\lambda(X) = \lambda(g^{-1}X)\qquad\text{and}\qquad \lambda g(X) = \lambda(Xg^{-1}),\qquad\text{for }X \in \fkn.\] Both of these actions \emph{commute} (or are said to be \emph{compatible}) in the sense that $(gX)h = g(Xh)$ and $(g\lambda)h = g(\lambda h)$ for $g,h \in U_\fkn$, $X \in \fkn$, and $\lambda \in \fkn^*$.  Hence in both cases we may remove all parentheses without introducing ambiguity.  

Given $X \in \fkn$ and $\lambda \in \fkn^*$, we denote the corresponding left, right, and two-sided $U_\fkn$-orbits by $U_\fkn X$, $XU_\fkn$, $U_\fkn X U_\fkn$ and $U_\fkn\lambda$, $\lambda U_\fkn$, $U_\fkn\lambda U_\fkn$.   These orbits have the following useful properties:
\begin{enumerate}
\item[(1)] $|U_\fkn X U_\fkn| = \displaystyle\frac{|U_\fkn X| | X U_\fkn|}{|U_\fkn X \cap X U_\fkn|}$ and $|U_\fkn \lambda U_\fkn| = \displaystyle \frac{|U_\fkn \lambda| | \lambda U_\fkn|}{|U_\fkn \lambda \cap \lambda U_\fkn|}$.
\item[(2)] $|U_\fkn \lambda| = |\lambda U_\fkn|$.
\item[(3)] The numbers of left, right, and two-sided $U_\fkn$-orbits in $\fkn$ and $\fkn^*$ are respectively equal. 
\item[(4)] The sets $(U_\fkn X-X)$ and $(XU_\fkn -X)$ are subspaces of $\fkn$.  Likewise, the sets $(U_\fkn \lambda - \lambda)$ and $(\lambda U_\fkn -\lambda)$ are subspaces of $\fkn^*$. 
\end{enumerate}
These results derive from Lemmas 3.1, 4.1, and 4.2 in \cite{DI06}.

\def\cov{\mathrm{cov}}

\subsection{Pattern Groups}

A particularly tangible class of algebra groups, known as \emph{pattern groups}, can be defined in terms of partial orderings.  Fix a positive integer $n$ and let $[n] = \{1,2,\dots,n\}$.  We denote by $[[n]]$ the set of positions above the diagonal in an $n\times n$ matrix:
\[ [[n]] = \{ (i,j) : 1\leq i < j \leq n\} .\]  In this work, by a \emph{poset} $\cP$ on $[n]$ shall we mean a subset $\cP \subset[[n]]$ such that if $(i,j), (j,k) \in \cP$ then $(i,k) \in \cP$.  The requirement $\cP \subset [[n]]$ is somewhat nonstandard; we include it to ensure that our pattern groups are subgroups of $U_n(q)$.

A poset  $\cP$ corresponds to the strict partial ordering $\prec$ of the set $\{1,2,\dots,n\}$ defined by setting $i \prec j$ if and only if $(i,j) \in \cP$.  %Given two posets $\cP$, $\cQ$ on $[n]$, we say that $\cP$ is a \emph{subposet} of $\cQ$ if $\cP \subset \cQ$ as a set.  
%Throughout, we let $[n]$ denote the poset \[[n]= \{ (i,j) : 1\leq i<j\leq n\}\] corresponding to the standard total ordering of $\{1,\dots,n\}$.  %Every poset on $[n]$ is then a subposet of $[n]$.  
 We visually depict $\cP$ via its Hasse diagram: the directed graph whose vertices are $1,2,\dots,n$ and whose directed edges are the ordered pairs $(i,k)\in \cP$ for which no $j$ exists with $(i,j),(j,k) \in \cP$.  For example, we can define the poset $\cP = \{ (1,3), (1,4), (2,3), (2,4), (3,4) \}$ on $[4]$ by writing
\[ \cP\ =\  \xy<0.25cm,0.8cm> \xymatrix@R=.3cm@C=.3cm{
  &  4  \\
  &  3  \ar @{-} [u]   \\
1 \ar @{-} [ur]   & 2 \ar @{-} [u] 
}\endxy \]

Fix a finite field $\FF_q$.  Given a poset $\cP$ on $[n]$, define $\fkn_\cP$ as the $\FF_q$-vector space of strictly upper triangular $n\times n$ matrices 
\[ \fkn_\cP = \{ X \in \Mat_n(\FF_q) : X_{ij} = 0\text{ if }(i,j) \notin \cP \}.\]  %Observe if $(i,k) \notin \cP$ then we must have $(i,j)\notin\cP$ or $(j,k) \notin \cP$ for all $j$, and hence if $(i,k)\notin \cP$ then
%\[ (XY)_{ik} = \sum_{1\leq j \leq n} X_{ij}Y_{jk} = 0,\qquad\text{for }X,Y \in \fkn_\cP.\] Consequently $XY \in \fkn_\cP$, so since every matrix in $\fkn_\cP$ is strictly upper triangular, $\fkn_\cP$ is a nilpotent $\FF_q$-algebra.   
As usual, we let $\fkn_\cP^*$ denote the dual space of $\FF_q$-linear functionals on $\fkn_\cP$.  

\begin{notation}
Given a matrix $X \in \fkn_\cP$ and a functional $\lambda \in \fkn_\cP^*$, define 
\[ \ba \supp(X) &= \{ (i,j)  \in \cP: X_{ij} \neq 0\}, \\
\supp(\lambda) &= \{(i,j) \in \cP : \lambda_{ij} \neq 0\},\ea\] where $\lambda_{ij} \overset{\mathrm{def}} = \lambda(e_{ij})$ and $e_{ij} \in \fkn_\cP$ is the elementary matrix with 1 in position $(i,j)$ and 0 in all other positions.
 \end{notation}

%Given a poset $\cP$ on $[n]$, 
The \emph{pattern group} $U_\cP$ is the algebra group $U_\cP = U_{\fkn_\cP}$ over $\FF_q$; i.e., the group of unipotent upper triangular matrices 
\[ U_\cP = \{ 1 + X  : X \in \fkn_\cP \}.\]  %This group is generated by matrices of the form $1 + te_{ij}$ for $(i:j) \in \cP$ and $t\in \FF_q^\times$.  
Note that under our definitions, the set $\cP$ can serve as a poset on $[n]$ for any sufficiently large integer $n$.  Thus, implicit in the notations $\fkn_\cP$, $\fkn_\cP^*$, $U_\cP$ is the choice of a dimension $n$ corresponding to $\cP$.  This choice has no effect on the isomorphism class of $U_\cP$, however.

\begin{notation} When $\cP = [[n]]$, we write $U_n(q)$, $\fkn_n(q)$, $\fkn_n^*(q)$ instead of $ U_{[[n]]}$, $\fkn_{[[n]]}$, $\fkn_{[[n]]}^*$.
\end{notation}

\subsection{Superclasses and Supercharacters of Algebra Groups}\label{sct-ag}

Fix a nilpotent $\FF_q$-algebra $\fkn$ and consider the algebra group $U_\fkn$.  In this section we define a set of superclasses and supercharacters of $U_\fkn$ which form a supercharacter theory in the sense of Section \ref{abstract-sc}.  Diaconis and Isaacs \cite{DI06} first defined this particular supercharacter theory as a generalization of the work of Andr\'e \cite{ An95} and Yan \cite{Ya01}.  

%We first define a partition of $U_\fkn$ into superclasses. 

 The map $X\mapsto 1 +X$ gives a bijection $\fkn \rightarrow U_\fkn$, and we define the \emph{superclasses} of $U_\fkn$ to be the sets formed by applying this map to the two-sided $U_\fkn$-orbits in $\fkn$.  The superclass of $U_\fkn$ containing $g \in U_\fkn$, which we denote $\cK_\fkn^g$, is thus  the set
\[ \label{superclass-defin} \cK_\fkn^g \overset{\mathrm{def}}= \{ 1+x(g-1)y : x,y \in U_\fkn\}.\]  Each superclass is a union of conjugacy classes, and one superclass consists of just the identity element of $U_\fkn$.  

Fix a nontrivial group homomorphism $\theta : \FF_q^+\rightarrow \CC^\times$.  The \emph{supercharacters} of $U_\fkn$ are then the functions $\chi^\lambda_\fkn : U_\fkn \rightarrow \CC$ indexed by $\lambda \in \fkn^*$, defined by the formula  
\be\label{formula}\chi^\lambda_\fkn(g) = \frac{|U_\fkn \lambda |}{|U_\fkn \lambda U_\fkn|} \sum_{\mu \in U_\fkn\lambda U_\fkn} \theta\circ \mu(g-1),\qquad\text{for }g \in U_\fkn.\ee 
It follows from this definition that $\chi_\fkn^\lambda = \chi_\fkn^\mu$ if and only if $\mu \in U_\fkn \lambda U_\fkn$, and that supercharacters are constant on superclasses.  The function $\chi_\fkn^\lambda$ is the character of the left $U_\fkn$-module 
\[ V^\lambda_\fkn = \CC\spanning\{ v_\mu : \mu \in U_\fkn \lambda \},\qquad\text{where } g v_\mu  = \theta\circ \mu\(1-g^{-1}\) v_{g\mu}\text{ for }g\in U_\fkn.\] This module can be realized as an explicit submodule of $\CC U_\fkn$ by setting 
\[v_\mu =  \sum_{g \in U_\fkn} \theta\circ \mu(1-g) g \in \CC U_\fkn.\]  %One can check that $gv_\mu =  \theta\circ \mu(1-g^{-1}) v_{g\mu}$ for $g \in U_\fkn$ and $\mu \in U_\fkn \lambda$, so $V_\fkn^\lambda$ indeed forms a $\CC U_\fkn$-module.    
%The group algebra $\CC U_\fkn$ and its character $\rho_{U_\fkn}$ decompose as \be\label{regrepdecomp} \CC U_\fkn \cong\bigoplus_{\lambda} (V^\lambda_\fkn)^{\oplus m_\fkn^\lambda}\qquad\text{and}\qquad 
%\rho_{U_\fkn} = \sum_{\lambda} m_\fkn^\lambda \chi^\lambda_\fkn,\ee where the sums are over a set of representatives $\lambda$ of the two-sided $U_\fkn$-orbits in $\fkn^*$, and $m_\fkn^\lambda = \frac{|U_\fkn \lambda U_\fkn|}{|U_\fkn \lambda|}$.  
For $\lambda, \mu \in \fkn^*$, 
\[ \langle \chi^\lambda_\fkn, \chi^\mu_\fkn \rangle_{U_\fkn} = \left\{ \begin{array}{ll} |U_\fkn\lambda \cap \lambda U_\fkn|,&\text{if }\mu \in U_\fkn\lambda U_\fkn, \\ 0,&\text{otherwise,}\end{array}\right.\quad\text{where }\langle \chi, \psi \rangle_{U_\fkn} = \frac{1}{|U_\fkn|} \sum_{g\in U_\fkn} \chi(g)\overline{\psi(g)}.\] Thus $\chi_\fkn^\lambda$ is irreducible if and only if $U_\fkn \lambda \cap \lambda U_\fkn  = \{\lambda\}$, and distinct supercharacters are orthogonal.

The numbers of superclasses and supercharacters are equal to the numbers of two-sided $U_\fkn$ orbits in $\fkn$ and $\fkn^*$, and hence are the same.  Furthermore, the character $\rho_{U_\fkn}$ of the regular representation of $U_\fkn$ decomposes as 
\[ \rho_{U_\fkn} = \sum_\lambda \frac{|U_\fkn \lambda U_\fkn|}{|U_\fkn \lambda|} \chi_\fkn^\lambda\] where the sum is over a set of representatives $\lambda$ of the two-sided $U_\fkn$ orbits in $\fkn^*$, and so each irreducible character of $U_\fkn$ appears as a constituent of a unique supercharacter.  We conclude that the supercharacters and superclasses defined in this way form a supercharacter theory of $U_\fkn$.

\begin{notation} 
When dealing with pattern groups, we typically replace the subscript $\fkn$, indicating the ambient nilpotent $\FF_q$-algebra, with $\cP$, indicating the ambient poset.  So $\cK^g_\fkn$ and $\chi^\lambda_\fkn$ become $\cK^g_\cP$ and $\chi^\lambda_\cP$.  When the nilpotent $\FF_q$-algebra $\fkn$ or poset $\cP$ is clear from the context, we may in turn abbreviate these various symbols as just $\cK^g$ and $\chi^\lambda$.
\end{notation}

\subsection{Superclasses and Supercharacters of $U_n(q)$}\label{sc-U_n-sect}

The supercharacter theory described in the preceding section arose as a generalization of a specific
attempt to approximate the irreducible characters of $U_n(q)$. The classification of this group's conjugacy 
classes and irreducible representations is a wild problem, but the classification of its 
superclasses and supercharacters has a highly satisfactory combinatorial answer, which provides  a fundamental example we will often consult.

Recall, $U_n(q)$ denotes the group of  upper triangular $n\times n$ matrices over $\FF_q$ with ones on the diagonal, $\fkn_n(q)$ denotes the nilpotent algebra of strictly upper triangular $n\times n$ matrices over $\FF_q$, and $\fkn_n^*(q)$ denotes the dual space of $\fkn_n(q)$.  

\begin{notation} Throughout, we let $e_{ij} \in \fkn_n(q)$ denote the $n\times n$ matrix with 1 in position $(i,j)$ and 0 is all other positions, and we let $e_{ij}^* \in \fkn_n^*(q)$ denote the linear functional defined by $e_{ij}^*(X) = X_{ij}$ for $X \in \fkn_n(q)$.
\end{notation}

Given a positive integer $n$, define 
 \[ \ba \sP_n(q) &= \{ \lambda \in \fkn_n(q) : \supp(\lambda)\text{ contains at most one position in each row and column} \}, \\
 \sP_n^*(q) &= \{ \lambda \in \fkn_n^*(q) : \supp(\lambda)\text{ contains at most one position in each row and column} \}.\ea\] 
We can identify elements in these sets with \emph{$\FF_q$-labeled set partitions of $[n]$}. A set partition $\lambda =\{\lambda_1,\dots,\lambda_\ell\}$ of $[n]$ is just a set of nonempty disjoint sets $\lambda_i$ whose union is $[n] = \{1,2,\dots,n\}$.   We say that $\lambda$ is $\FF_q$-labeled if we have a map assigning to each pair of consecutive integers in the parts $\lambda_i$ an element of $\FF_q^\times$.  This definition gives a $q$-analogue for set partitions; in particular, when $q=2$ set partitions and $\FF_q$-labeled set partitions are really the same objects.  
We mention there are multiple $q$-analogues for set partitions in the literature; see for example \cite{q-analog1,q-analog2}

An element $\lambda$ of $\sP_n(q)$ or $\sP_n^*(q)$ corresponds to the set partition of $[n]$ whose parts are the equivalence classes in $[n]$ under the relation $\sim$ defined by setting $i \sim j$ if $(i,j ) \in \supp(\lambda)$ or $(j,i) \in \supp(\lambda)$ and extending transitively.  This set partition comes with the natural $\FF_q$-labeling given by assigning each pair $(i,j)$ the value $\lambda_{ij} \in \FF_q^\times$.

 %These sets are clearly in bijection, and we refer to elements of both  as $\FF_q$-labeled set partitions.  
 %Observe that the support of a set partition is well defined and consistent, in the sense that $\supp(\lambda)$ defines the same set, whether $\lambda$ is viewed as a set partition, a matrix, or a linear functional.
Yan showed in \cite{Ya01} that the superclasses and supercharacters of $ U_n(q)$ are indexed $\sP_n(q)$ and $\sP_n^*(q)$; explicitly, we have bijections
\be\label{U_n(q)-classification} \barr{ccc}\sP_n(q) & \to &  \left\{ \barr{c} \text{Superclasses} \\ \text{of $ U_n(q)$} \earr\right\} \\ 
\lambda & \mapsto & \cK^{1+\lambda} \earr
\qquad\text{and}\qquad
\barr{ccc}\sP_{n}^*(q) & \to &  \left\{ \barr{c} \text{Supercharacters} \\ \text{of $ U_n(q)$} \earr\right\} \\ 
\lambda & \mapsto & \chi^\lambda. \earr
\ee  Andr\'e proved the character result earlier in \cite{An95}.  It follows that the number of superclasses and supercharacters of $ U_n(q)$ is given by $|\sP_n(q)| = |\sP_n^*(q)|$, which we denote by $B_n(q)$.  For $q=2$, $B_n(q)$ is just the $n$th Bell number, and for arbitrary $q$, Yan \cite{Ya01} showed  that $B_n(q)$ satisfies the recurrence 
\be\label{B_q-def} \ba B_0(q) &= 1, \\ 
B_{n+1}(q) &= \sum_{k=0}^{n} \binom{n}{k} (q-1)^k B_{n-k}(q),&&\qquad\text{for }n\geq 0.
\ea\ee 
The value of a supercharacter indexed by $\lambda \in \sP_n^*$ at a superclass indexed by $\mu \in \sP_n$ has the  formula
\be\label{added-formula} \chi^\lambda(1 + \mu) = \left\{\barr{ll} \displaystyle \(\prod_{(i,l) \in \supp(\lambda)} q^{l-i-1-f_\mu(i,l)} \) \theta \circ\lambda(\mu) ,&\barr{l} \text{if }(i,j),(j,k) \notin\supp(\mu)\text{ whenever } \\ \text{$i<j<k$ and $(i,k) \in \supp(\lambda)$}\earr \\ \\ 0,&\text{otherwise}\earr\right.\ee
where $f_\mu(i,l) \overset{\mathrm{def}}= |\{ (j,k) \in \supp(\mu) : i<j<k<l\}|$.
Andr\'e \cite{An95} first derived this remarkable closed form formula, but with some restrictions on the characteristic of $\FF_q$. Yan \cite{Ya01} later removed these restrictions, proving that the formula holds over all finite fields.

\subsection{Restriction and Superinduction}\label{resind-sect}

Before proceeding, we briefly discuss how one can restrict and induce a supercharacter to and from an algebra subgroup.

\begin{notation} 
Given sets $S'\subset S$, we denote the restriction of a function $f$ on $S$ to $S'$ by $f \downarrow S'$.  
\end{notation}

If  $\fkm \subset \fkn$ are nilpotent $\FF_q$-algebras and $\chi:U_\fkn \rightarrow \CC$ is a superclass function, then the restriction $\chi \downarrow U_\fkm$ is a superclass function of $U_\fkm$, and so is equal to a linear combination of supercharacters of $U_\fkm$.  If $\chi$ is a supercharacter of $U_\fkn$, then the restriction $\chi \downarrow U_\fkm$ is  a linear combination of supercharacters $U_\fkm$ with nonnegative integer coefficients \cite[Theorem 6.4]{DI06}.

The usual definition of induction does not in general send a supercharacter to a $\ZZ_{\geq 0}$-linear combination of supercharacters, or even to a superclass function.  To remedy this, Diaconis and Isaacs define \emph{superinduction}  in \cite{DI06} as a map $\SInd$ adjoint to restriction which takes superclass functions of an algebra subgroup $U_\fkm \subset U_\fkn$ to superclass functions of $U_\fkn$.  Explicitly, if $U_\fkm\subset U_\fkn$ are algebra groups and $\chi : U_\fkm \rightarrow \CC$ is a superclass function, then we define
\[\label{sind} \SInd_{U_\fkm}^{U_\fkn} (\chi)(g) = \frac{1}{|U_\fkm||U_\fkn|} \sum_{x,y \in U_\fkn} \overset{\circ}\chi(x(g-1)y+1),\quad\text{where}\quad \overset\circ \chi(z) = \left\{\barr{ll} \chi(z), & z \in U_\fkm \\ 0,&\text{otherwise}\earr\right.\] for $g\in U_\fkn$.  We see from this formula that the degree of the superinduced function is $\SInd_{U_\fkm}^{U_\fkn} (\chi)(1) = \frac{|U_\fkn|}{|U_\fkm|} \chi(1)$.  Since each value of $\SInd_{U_\fkm}^{U_\fkn} (\chi)$ is given by averaging $\chi$ over a superclass of $U_\fkn$, superinduction takes superclass functions of $U_\fkm$ to superclass functions of $U_\fkn$.  At the same time, superinduction is adjoint to restriction on the space of superclass functions, in the sense that 
\be\label{adjoint}\left \langle \SInd_{U_\fkm}^{U_\fkn} (\chi), \psi \right\rangle_{U_\fkn} = \left\langle \chi, \psi \downarrow U_\fkm \right\rangle_{U_\fkm}\ee for all superclass functions $\chi : U_\fkm \rightarrow \CC$ and $\psi : U_\fkn \rightarrow \CC$.  

While the restriction of a supercharacter to a subgroup is always a character, the same is not true of $\SInd_{U_\fkm}^{U_\fkn} (\chi)$, even if $\chi$ is a supercharacter of $U_{\fkm}$.  It follows from the reciprocity identity (\ref{adjoint}), nevertheless, that if $\chi$ is a supercharacter of $U_\fkm$ then $\SInd_{U_\fkm}^{U_\fkn} (\chi)$ is a linear combination of supercharacters of $U_\fkn$ with positive rational coefficients given by (possibly negative) powers of $q$.  The following lemma says something a bit more descriptive about how a superinduced supercharacter decomposes.  This result will be of some use later.

\begin{lemma}\label{sind-lemma}
Let $\fkm \subset \fkn$ be nilpotent $\FF_q$-algebras with $\mu \in \fkm^*$.  Then 
\[ \SInd_{U_\fkm}^{U_\fkn}(\chi_\fkm^\mu) = \sum_{\substack{\lambda \in \fkn^*\\ \lambda \downarrow \fkm  \in U_\fkm \mu U_\fkm}} \frac{|U_\fkm \mu|}{|U_\fkm \mu U_\fkm| |U_\fkn \lambda|}  \chi_\fkn^\lambda .\]%= \sum_{\lambda \in J} \frac{|U_\fkm \lambda U_\fkm| |U_\fkm \mu|}{|U_\fkm \mu U_\fkm||U_\fkn \lambda|} \chi_\fkn^\lambda \ee where in the last sum, $J$ denotes a set of functionals $\lambda \in  \fkn^*$ with $\lambda \downarrow \fkm = \mu$, such that every $\nu \in \fkn^*$ with $\nu \downarrow \fkm = \mu$ has $\nu \in U_\fkm \lambda U_\fkm$ for a unique $\lambda \in J$. 
 \end{lemma}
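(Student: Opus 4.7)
My plan is to extract the coefficient of each $\chi_\fkn^\lambda$ in the expansion of $\SInd_{U_\fkm}^{U_\fkn}(\chi_\fkm^\mu)$ by combining the adjointness identity (\ref{adjoint}) with the orthogonality of supercharacters recorded in Section \ref{sct-ag}. Fix a set $L \subset \fkn^*$ of two-sided $U_\fkn$-orbit representatives and write $\SInd_{U_\fkm}^{U_\fkn}(\chi_\fkm^\mu) = \sum_{\lambda \in L} c_\lambda \chi_\fkn^\lambda$. Because distinct supercharacters are orthogonal and $\langle \chi_\fkn^\lambda, \chi_\fkn^\lambda\rangle_{U_\fkn} = |U_\fkn\lambda \cap \lambda U_\fkn|$, one has $c_\lambda = \langle \chi_\fkm^\mu, \chi_\fkn^\lambda \downarrow U_\fkm\rangle_{U_\fkm}/|U_\fkn\lambda \cap \lambda U_\fkn|$, where the numerator uses (\ref{adjoint}) to convert $\langle \SInd_{U_\fkm}^{U_\fkn}(\chi_\fkm^\mu), \chi_\fkn^\lambda\rangle_{U_\fkn}$ into a restriction pairing on $U_\fkm$.

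The heart of the argument is evaluating this restriction pairing using the explicit formula (\ref{formula}). Substituting the formula for both $\chi_\fkm^\mu$ and $\chi_\fkn^\lambda\downarrow U_\fkm$ reduces the computation to summing $\theta\circ\eta(g-1)\,\overline{\theta\circ\nu(g-1)}$ over $g \in U_\fkm$, $\eta \in U_\fkm\mu U_\fkm$, and $\nu \in U_\fkn\lambda U_\fkn$. Since $g \mapsto g-1$ is a bijection $U_\fkm \to \fkm$ and $\theta\circ\rho$ is a nontrivial additive character of $\fkm$ whenever $\rho \in \fkm^*$ is nonzero, the sum over $g$ collapses to $|\fkm|$ exactly when $\eta = \nu\downarrow\fkm$ and vanishes otherwise, yielding
\[
\langle \chi_\fkm^\mu, \chi_\fkn^\lambda \downarrow U_\fkm\rangle_{U_\fkm} \;=\; \frac{|U_\fkm\mu|\cdot|U_\fkn\lambda|}{|U_\fkm\mu U_\fkm|\cdot|U_\fkn\lambda U_\fkn|}\,\bigl|\{\nu \in U_\fkn\lambda U_\fkn : \nu\downarrow\fkm \in U_\fkm\mu U_\fkm\}\bigr|.
\]

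Dividing by $|U_\fkn\lambda \cap \lambda U_\fkn|$ and applying the orbit identities $|U_\fkn\lambda U_\fkn| = |U_\fkn\lambda|\cdot|\lambda U_\fkn|/|U_\fkn\lambda \cap \lambda U_\fkn|$ and $|U_\fkn\lambda| = |\lambda U_\fkn|$ from Section \ref{sct-ag} simplifies $c_\lambda$ to $\frac{|U_\fkm\mu|}{|U_\fkm\mu U_\fkm|\cdot|U_\fkn\lambda|}$ times the cardinality above. Since $\chi_\fkn^\nu = \chi_\fkn^\lambda$ and $|U_\fkn\nu| = |U_\fkn\lambda|$ for every $\nu$ in the orbit $U_\fkn\lambda U_\fkn$, the contribution $c_\lambda\chi_\fkn^\lambda$ redistributes as one summand of the form $\frac{|U_\fkm\mu|}{|U_\fkm\mu U_\fkm|\cdot|U_\fkn\nu|}\chi_\fkn^\nu$ for each $\nu \in U_\fkn\lambda U_\fkn$ satisfying $\nu\downarrow\fkm \in U_\fkm\mu U_\fkm$. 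Summing over $\lambda \in L$ then reindexes the total precisely as the sum in the statement, since every $\nu \in \fkn^*$ lies in exactly one orbit. I expect the main obstacle to be carrying out step two cleanly — expanding the double sum, identifying $\eta$ with $\nu\downarrow\fkm$ via character orthogonality, and correctly tracking the resulting cardinality — after which the remaining manipulations are pure bookkeeping with the orbit-size identities.
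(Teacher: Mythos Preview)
Your argument is correct, but it takes a genuinely different route from the paper's proof. The paper works directly from the definition of superinduction: it expands $\chi_\fkm^\mu$ via (\ref{formula}), then inserts the Fourier-type identity
\[
\frac{|U_\fkm|}{|U_\fkn|}\sum_{\substack{\lambda \in \fkn^*\\ \lambda\downarrow\fkm = \nu}}\theta\circ\lambda(X)
=\begin{cases}\theta\circ\nu(X),&X\in\fkm,\\0,&\text{otherwise},\end{cases}
\]
obtained by summing over the coset $\tilde\nu+\fkm^\perp$. This replaces the indicator $\overset{\circ}\chi$ in the superinduction formula by a sum over $\fkn^*$, after which the double sum over $x,y\in U_\fkn$ regroups pointwise into the supercharacter formula for $\chi_\fkn^\lambda$.

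Your approach instead extracts coefficients via the adjointness identity (\ref{adjoint}) and orthogonality, computing $\langle\chi_\fkm^\mu,\chi_\fkn^\lambda\downarrow U_\fkm\rangle_{U_\fkm}$ by expanding both factors with (\ref{formula}) and collapsing the sum over $U_\fkm$. The two methods are in some sense dual: the paper performs the orthogonality argument on the $\fkn^*$ side (over $\fkm^\perp$) before summing over the group, while you perform it on the group side (over $U_\fkm$) after fixing $\lambda$. Your route is perhaps closer in spirit to the classical Frobenius-reciprocity arguments and has the minor advantage that the redistribution step at the end makes the passage from orbit representatives to the full sum over $\fkn^*$ transparent; the paper's route is more self-contained in that it never invokes (\ref{adjoint}) and produces the value of $\SInd_{U_\fkm}^{U_\fkn}(\chi_\fkm^\mu)$ at each $g\in U_\fkn$ directly. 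One small remark: the orbit-size identities you cite are recorded at the end of Section~2.2 rather than in Section~\ref{sct-ag}.
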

 
 %\begin{remark}
%Some the functionals $\lambda$ indexing the right most sum may belong to the same two-sided $U_\fkn$ orbit, and so may index the same supercharacter $\chi_\fkn^\lambda$.
 %\end{remark}

%To prove this, we require the following standard result.
%
%\begin{lemma}\label{hom-sum-lemma}
%If $\varphi : G \rightarrow \CC^\times$ is a non-trivial homomorphism of a finite group $G$,  then $\sum_{g\in G} \varphi(g) = 0$.
%\end{lemma}

%\begin{proof}
%Let $A = \varphi(G) \subset \CC^\times$.  Then $A$ is a nontrivial abelian multiplicative group, and so the identity map $\chi$ defined by $\chi(a) = a$ is a nontrivial 1-dimensional representation and hence also a non-principal, irreducible character.  Therefore, if $\chi_0$ is the principal character of $A$ defined by $\chi_0(a) = 1$, then $\sum_{g\in G} \varphi(g) = |\ker \varphi| \sum_{a \in A} \chi(a) = |\ker \varphi| |A| \langle \chi, \chi_0 \rangle_A = 0$.
%\end{proof}

\begin{proof}%[Proof of Lemma \ref{sind-lemma}] 
Let $\fkm^\perp = \{ \gamma \in \fkn^* : \ker\gamma \supset \fkm\}$, and observe that $|\fkm^\perp| = \frac{|\fkn|}{|\fkm|} = \frac{|U_\fkn|}{|U_\fkm|}$.  For any $\nu \in \fkm^*$,  we have $\{ \lambda \in \fkn^*: \lambda \downarrow \fkm = \nu\} = \wt \nu + \fkm^\perp$ where $\wt \nu \in \fkn^*$ is an arbitrary functional with $\wt \nu \downarrow \fkm = \nu$.  Thus if $\nu \in \fkm^*$ and $X \in \fkn$,  then
\[ \frac{|U_\fkm|}{|U_\fkn|}\sum_{\substack{\lambda \in \fkn^*\\ \lambda \downarrow \fkm = \nu}} \theta \circ \lambda(X)  = \left\{\barr{ll}  \theta\circ \nu(X), & \text{if $X \in \fkm$,} \\ 0,&\text{otherwise,}\earr\right.
\quad \text{since} 
\quad\sum_{\substack{\lambda \in \fkn^*\\ \lambda \downarrow \fkm=\nu}} \theta \circ \lambda(X) =  \theta\circ \wt \nu(X) \sum_{\eta \in \fkm^\perp} \theta\circ \eta(X)\] and  %the map $\eta \mapsto \theta \circ \eta(X)$ is a nontrivial homomorphism from the additive group $\fkm^\perp$ to the multiplicative group $\CC^\times$, and so
 $\sum_{\eta \in \fkm^\perp} \theta\circ \eta(X) =0$ if $X \notin\fkm$ by by standard character orthogonality relations.  It now follows from (\ref{formula}) that if $\mu \in \fkm^*$ and $g \in U_\fkn$, then
\[ \ba\SInd_{U_\fkm}^{U_\fkn}(\chi_\fkm^\mu)(g) %&= \frac{1}{|U_\fkm| |U_\fkn|} \sum_{x,y \in U_\fkn} \overset\circ{\chi_\fkm^\mu}\(x(g-1)y+1\) 
%\\
&=  \frac{1}{ |U_\fkm| |U_\fkn|} \sum_{x,y \in U_\fkn}  \frac{|U_\fkm\mu|}{|U_\fkm \mu U_\fkm|} \sum_{\nu\in U_\fkm\mu U_\fkm}  \frac{|U_\fkm|}{|U_\fkn|}\sum_{\substack{\lambda \in \fkn^*\\ \lambda \downarrow \fkm = \nu}} \theta \circ \lambda(x(g-1)y) 
\\
 &=  \frac{1}{|U_\fkn|^2} \frac{|U_\fkm\mu|}{|U_\fkm \mu U_\fkm|}  \sum_{x,y \in U_\fkn}    \sum_{\substack{\lambda \in \fkn^*\\ \lambda \downarrow \fkm  \in U_\fkm \mu U_\fkm}} \theta \circ \lambda(x(g-1)y)
\\
 &= \frac{|U_\fkm\mu|}{|U_\fkm \mu U_\fkm|} \sum_{\substack{\lambda \in \fkn^*\\ \lambda \downarrow \fkm \in U_\fkm \mu U_\fkm}} \frac{1}{|U_\fkn \lambda U_\fkn|} \sum_{\gamma \in U_\fkn \lambda U_\fkn}  \theta \circ \gamma(g-1) 
 =   
  \sum_{\substack{\lambda \in \fkn^*\\ \lambda \downarrow \fkm  \in U_\fkm \mu U_\fkm}} \frac{|U_\fkm \mu|}{|U_\fkm \mu U_\fkm| |U_\fkn \lambda|} \chi_\fkn^\lambda(g)
.\ea\] 
\end{proof}

Before moving on, we describe one additional property of superinduction, which gives a supercharacter analogue for Mackey's theorem on the restriction of induced characters.  This classical result goes as follows.  Let $H,K \subset G$ be groups, and choose a character $\chi$ of $H$ and an element $s \in G$.  Define $H^s = s^{-1} H s$ and $D_s = H^s \cap K$, and let $\chi_s = \chi^s \downarrow D_s$, where $\chi^s$ is the character of $H^s$ defined by 
\[ \chi^s(x) = \chi( sxs^{-1}),\qquad\text{for }x \in H^s.\]  If $G = \bigcup_{s \in I} HsK$ is a decomposition of $G$ into double cosets, then Mackey's theorem states that
\[ \Ind_{H}^{G}(\chi) \downarrow K = \sum_{s \in I} \Ind_{D_s}^K (\chi_s).\]  Retaining this notation, we have a similar result for supercharacters and superinduction.

\begin{proposition}
 Let $H,K\subset G$ be algebra groups and let $G = \bigcup_{s \in I} HsK$ be a decomposition of $G$ into double cosets.  If $\chi$ is a supercharacter of $H$, then 
 \[ \SInd_{H}^{G}(\chi) \downarrow K = \sum_{s \in I} \frac{|HsK|}{|G|} \SInd_{D_s}^K (\chi_s).\]
\end{proposition}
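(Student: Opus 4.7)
The plan is to evaluate both sides of the claimed identity at an arbitrary element $k \in K$ by expanding the defining formula for superinduction, and then match the resulting sums using the double coset decomposition of $G$.

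First, I would rewrite the left-hand side as
\[ \SInd_H^G(\chi)(k) = \frac{1}{|H||G|}\sum_{x,y \in G}\overset{\circ}{\chi}(x(k-1)y + 1), \]
and partition $G \times G$ using the two decompositions $G = \bigsqcup_{s \in I} HsK = \bigsqcup_{t \in I}K t^{-1} H$. Writing $x = h_1 s k_1$ and $y = k_2 t^{-1} h_2$, with the respective parametrizations having overcounts $|H \cap sKs^{-1}| = |D_s|$ and $|H \cap tKt^{-1}| = |D_t|$, this converts the double sum into one indexed by $(s, t, h_1, h_2, k_1, k_2) \in I^2 \times H^2 \times K^2$.

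The crucial simplification comes from two algebra-group invariance properties. Since $\fkh$ is a subalgebra of the ambient nilpotent algebra and $h - 1,\ h^{-1} - 1 \in \fkh$ for every $h \in H$, the product $hw$ (with $w$ in the ambient algebra) lies in $\fkh$ if and only if $w$ does; the analogous statement holds for right $H$-multiplication. Consequently, the condition $x(k-1)y \in \fkh$ depends only on $sk_1(k-1)k_2 t^{-1}$, and, since $\chi$ is a superclass function of $H$, its value on $h_1 sk_1(k-1)k_2 t^{-1} h_2 + 1$ coincides with its value on $sk_1(k-1)k_2 t^{-1} + 1$ whenever either lies in $H$. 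Summing over $(h_1, h_2) \in H \times H$ thus contributes a factor of $|H|^2$, reducing the LHS to
\[ \frac{|H|}{|G|}\sum_{s, t \in I}\frac{1}{|D_s||D_t|}\sum_{k_1, k_2 \in K}\overset{\circ}{\chi}\bigl(sk_1(k-1)k_2 t^{-1} + 1\bigr). \]

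Next, I would expand each right-hand summand $\SInd_{D_s}^K(\chi_s)(k)$ using the defining formula for superinduction over $D_s \subseteq K$, unfold $\chi_s(z) = \chi(szs^{-1})$ for $z \in D_s$, and apply the substitution $(x_0, y_0) \mapsto (s^{-1} k_1 s,\ s^{-1} k_2 s)$. This rewrites the $s$-summand in a form matching the diagonal $(s, s)$ contribution to the LHS expansion above. The remaining step is to argue that the off-diagonal $(s \neq t)$ pairs in the LHS reassemble with the diagonal terms to produce the coefficient $|HsK|/|G|$ in front of each $\SInd_{D_s}^K(\chi_s)$.

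The main obstacle I anticipate is this final reassembly, which requires a careful tally of how the various $(s, t)$ pairs contribute to a common element of $\fkh$. The key should be to identify, for each pair $(s, t)$ and $(k_1, k_2)$, the two-sided $H \times H$ orbit in $\fkh$ containing $sk_1(k-1)k_2 t^{-1}$ (when this orbit meets $\fkh$ at all), and then to count multiplicities carefully using the identity $|HsK| = |H||K|/|D_s|$ together with the basic properties of two-sided $G$-orbits reviewed in the preliminaries. The combinatorial bookkeeping here should yield the claimed factor $|HsK|/|G|$.
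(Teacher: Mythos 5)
Your reduction of the left-hand side is correct and runs parallel to the paper's own computation: the paper likewise expands $\SInd_H^G(\chi)(k)$ from the defining formula, parametrizes the summation variables through the double coset decomposition (using coset representatives $R_s$ of $D_s$ in $K$ instead of your full $K$-sums weighted by $1/|D_s|$), and uses the two facts you cite, that left and right multiplication by $H$ preserves membership in $\fkh$ and that $\chi$ is constant on superclasses of $H$, to strip off the $H$-factors. Up to your displayed expression
$\frac{|H|}{|G|}\sum_{s,t\in I}\frac{1}{|D_s||D_t|}\sum_{k_1,k_2\in K}\overset{\circ}{\chi}\bigl(sk_1(k-1)k_2t^{-1}+1\bigr)$
the two arguments agree in substance.

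The genuine gap is the step you postpone to the end, and it is not mere bookkeeping. Carrying out the substitution you describe shows that the diagonal terms $s=t$ of your expansion already equal the entire claimed right-hand side: since $sk_1(k-1)k_2s^{-1}+1=s\bigl(1+k_1(k-1)k_2\bigr)s^{-1}$ and $1+k_1(k-1)k_2\in K$, the $(s,s)$ term is exactly $\frac{|H||K|}{|G||D_s|}\SInd_{D_s}^K(\chi_s)(k)=\frac{|HsK|}{|G|}\SInd_{D_s}^K(\chi_s)(k)$. Consequently there is nothing for the off-diagonal pairs to ``reassemble'' into; your route can only close if every $(s,t)$ term with $s\neq t$ vanishes identically, and you offer no argument for that. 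In fact it is false in general: at $k=1$ the $(s,t)$ term equals $\frac{|H||K|^2}{|G||D_s||D_t|}\chi(1)>0$, so your full expansion gives $\frac{|G|}{|H|}\chi(1)$, whereas the diagonal part alone gives $\frac{1}{|G||H|}\sum_{s}|HsK|^2\,\chi(1)$, and these differ whenever there is more than one double coset. So the missing step cannot be supplied in the form you anticipate; note that the paper's proof passes from the sum over $x,y\in G$ directly to a sum indexed by a single $s$, i.e.\ it retains only those pairs with $x$ and $y^{-1}$ in the same double coset, and your computation makes visible that this is precisely the point requiring justification.
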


\begin{proof}
For each $s \in I$, let $R_s$ be a set of right coset representatives of $D_s$ in $K$, so that $K = \bigcup_{t \in R_s} D_s t$.  Then $HsK = \bigcup_{t \in R_s} Hst$ is a partition into disjoint sets, and so for $k \in K$,
\[\ba \SInd_H^G(\chi)(k) &=\frac{1}{|H||G|} \sum_{\substack{x,y \in G \\ x(k-1)y+1 \in H}} \chi \(x(k-1)y+1\) = \frac{|H|}{|G|} \sum_{s \in I} \sum_{\substack{t,u \in R_s \\ t(k-1)u^{-1}+1 \in H^s }} \chi^s\(t(k-1)u^{-1}+1\)  \\
& 
= \sum_{s \in I}  \frac{|H|}{|G||D_s|^2} \sum_{\substack{ x,y \in K \\ x(k-1)y+1 \in D_s}} (\chi^s\downarrow D_s)\(x(k-1)y+1\)
= \sum_{s \in I}  \frac{|H||K|}{|G||D_s|} \SInd_{D_s}^K( \chi_s) (k).
\ea
\]  The proposition now follows by noting that $|HsK| = |H^s K|  = \frac{|H^s||K|}{|H^s \cap K|}= \frac{|H||K|}{|D_s|}$.
\end{proof}

\section{Supernormal Algebra Subgroups}\label{supernormal1-sect}

We recall from Section \ref{abstract-sc} that if $G$ is a finite group with a supercharacter theory $(\cS,\cS^\vee)$, then a subgroup $H$ is \emph{supernormal} if it is given by the union of a set of superclasses in $\cS^\vee$.  In this section we investigate this definition in the context of algebra groups and the supercharacter theory introduced in Section \ref{sct-ag}.  To begin, we observe that 
a supernormal algebra subgroup is automatically normal, but the converse is not true in general.

\begin{example}\label{converse-fails-example}
 Let $\fkm\subset\fkn$ be the nilpotent  $\FF_q$-algebras 
\[ \fkm = \left\{ \(\barr{cccc} 1 & a & b & c \\ 0 & 1 & 0 & -b \\ 0 & 0 & 1 & a \\ 0 & 0 & 0 & 1 \earr\) : a,b,c \in \FF_q \right\} \quad\text{and}\quad 
\fkn  = \fkn_4(q) = \left\{ \(\barr{cccc} 1 & a & b & c \\ 0 & 1 & d & e \\ 0 & 0 & 1 & f \\ 0 & 0 & 0 & 1 \earr\) : a,b,c,d,e,f \in \FF_q \right\}.
\] One can check that $U_\fkm \vartriangleleft U_\fkn$; in particular, it suffices to confirm that $(1+te_{i,i+1})X(1-te_{i,i+1}) \in \fkm$ for an arbitrary $X \in \fkm$, $t\in \FF^\times$, and $i=1,2,3$.  However, $U_\fkm$ is not supernormal in $U_\fkn$ since $(1+e_{23})X \notin \fkm$ for $X \in \fkm$ with $X_{12} = X_{34}\neq 0$.  
\end{example}

If  $\fkm \subset \fkn$ are  nilpotent  $\FF_q$-algebras, then by definition the algebra subgroup $U_\fkm \subset U_\fkn$ is {supernormal} if and only if $gXh \in \fkm$ for all $g,h \in U_\fkn$ and $X \in \fkm$.   We expand on this characterization in the following proposition.

\begin{proposition}\label{supernormal-properties} Let $\fkm\subset \fkn$ be nilpotent  $\FF_q$-algebras.  Then the following are equivalent:
\begin{enumerate}
 \item[(1)] $U_\fkm$ is supernormal in $U_\fkn$.
\item[(2)] $U_\fkm \vartriangleleft U_\fkn$ and $g X \in \fkm$ for all $g \in U_\fkn$ and $X \in \fkm$.
%$gX \in \fkm$ for all $X \in \fkm$ and $g\in U_\fkn$.
\item[(3)] $\fkm$ is a two-sided ideal in $\fkn$.%; i.e., $AX \in \fkm$ and $XA \in \fkm$ for all $A \in \fkn$ and $X \in \fkm$.

\item[(4)]  If $\fkm^\perp = \{ \gamma \in \fkn^* : \ker \gamma \supset \fkm\}$, then $U_\fkm \gamma U_\fkm = \{ \gamma \}$ for all $\gamma \in \fkm^\perp$.

\item[(5)] $g\gamma h \in \fkm^\perp$ for all $g,h \in U_\fkn$ and $\gamma \in \fkm^\perp$. 

\end{enumerate}
\end{proposition}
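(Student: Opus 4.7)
The plan is to take condition (3), that $\fkm$ is a two-sided ideal in $\fkn$, as the central hub and establish the chain $(1) \Rightarrow (2) \Rightarrow (3) \Rightarrow (1)$ together with the two auxiliary equivalences $(3) \Leftrightarrow (4)$ and $(3) \Leftrightarrow (5)$. The backbone of each step is the parametrization $g = 1+N$ of $U_\fkn$ by $\fkn$, which translates multiplicative statements about $U_\fkm$ and $U_\fkn$ into linear statements about $\fkm$ and $\fkn$ and back.

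For $(1) \Rightarrow (2)$, unpacking the superclass description of $\cK_\fkn^{1+X}$ for $X \in \fkm$ and setting $y = 1$ immediately yields $gX \in \fkm$ for all $g \in U_\fkn$, while normality follows because each superclass is a union of conjugacy classes. For $(2) \Rightarrow (3)$, writing $g = 1+N$ in $gX \in \fkm$ gives $NX \in \fkm$, so $\fkn\fkm \subset \fkm$; the right ideal property $\fkm\fkn \subset \fkm$ is then obtained from normality: conjugation gives $gXg^{-1} \in \fkm$, and since $Y := gX$ again lies in $\fkm$ and ranges over all of $\fkm$ as $X$ varies over $\fkm$, the relation $Yg^{-1} \in \fkm$ forces $\fkm h \subset \fkm$ for every $h \in U_\fkn$, which expands to $\fkm\fkn \subset \fkm$. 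Finally, $(3) \Rightarrow (1)$ follows by direct expansion of $(1+N_1)X(1+N_2) = X + N_1X + XN_2 + N_1XN_2$, in which each summand lies in $\fkm$ by the ideal property.

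For $(3) \Leftrightarrow (4)$, the forward direction computes $(g\gamma h)(X) = \gamma(g^{-1}Xh^{-1})$ for $g,h \in U_\fkm$; expanding $g^{-1}Xh^{-1} - X$ produces only terms involving a factor from $\fkm$, so $\gamma \in \fkm^\perp$ annihilates them. Conversely, specializing (4) to $g = 1+M \in U_\fkm$, $h = 1$ forces $\gamma(M'X) = 0$ for every $\gamma \in \fkm^\perp$, $M' \in \fkm$, and $X \in \fkn$, and the identity $\fkm = \bigcap_{\gamma \in \fkm^\perp} \ker\gamma$ then yields $\fkm\fkn \subset \fkm$; symmetrically choosing $g = 1$ and $h \in U_\fkm$ gives $\fkn\fkm \subset \fkm$. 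The equivalence $(3) \Leftrightarrow (5)$ is analogous but uses $U_\fkn$ in place of $U_\fkm$: forward, ideal stability ensures $g^{-1}Xh^{-1} \in \fkm$ for $X \in \fkm$; reverse, if some $NX$ with $N \in \fkn$ and $X \in \fkm$ fell outside $\fkm$, one picks $\gamma \in \fkm^\perp$ with $\gamma(NX) \neq 0$ and takes $g = (1+N)^{-1}$, $h = 1$ to contradict $g\gamma h \in \fkm^\perp$.

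I expect the main obstacle to be the step $(2) \Rightarrow (3)$, specifically extracting $\fkm\fkn \subset \fkm$ from normality together with the left-sided hypothesis. Naively, knowing that both $gX$ and $gXg^{-1}$ lie in $\fkm$ does not obviously separate into independent statements about left and right multiplication. The trick is to view left multiplication by $g$ as a bijection of $\fkm$ onto itself, so that $gXg^{-1}$ can be reread as $Yg^{-1}$ for a generic $Y \in \fkm$; this converts the conjugation hypothesis into the desired right-multiplication statement, and expanding $g^{-1} = 1 + N'$ then yields the right ideal property.
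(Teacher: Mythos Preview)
Your proof is correct. The organization differs from the paper's: you use (3) as a central hub, proving the cycle $(1)\Rightarrow(2)\Rightarrow(3)\Rightarrow(1)$ together with $(3)\Leftrightarrow(4)$ and $(3)\Leftrightarrow(5)$, whereas the paper proves the single cycle $(1)\Rightarrow(2)\Rightarrow(3)\Rightarrow(4)\Rightarrow(5)\Rightarrow(1)$. The individual arguments largely overlap, but two steps differ in flavor. For $(2)\Rightarrow(3)$, you obtain $\fkm\fkn\subset\fkm$ by observing that left multiplication by $g$ is a bijection of $\fkm$ and then reading $gXg^{-1}\in\fkm$ as $Yg^{-1}\in\fkm$; the paper instead writes down the single identity $XA=(1+A)^{-1}\bigl((1+A)X\bigr)(1+A)-X$ and reads off the result in one line. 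More interestingly, the paper's $(4)\Rightarrow(5)$ is a direct computation: for $X\in\fkm$ one rewrites $\gamma((g^{-1}-1)X)$ as $\bigl(\gamma(1+X)^{-1}-\gamma\bigr)(g^{-1}-1)$, which vanishes precisely because (4) says $\gamma(1+X)^{-1}=\gamma$. This avoids passing through (3) and exhibits a neat duality between the $\fkm$-variable and the $\fkn$-variable. Your hub-and-spoke approach trades that trick for two extra but entirely routine implications $(4)\Rightarrow(3)$ and $(5)\Rightarrow(3)$ via $\fkm=\bigcap_{\gamma\in\fkm^\perp}\ker\gamma$; this is slightly longer but arguably more transparent, since every step reduces immediately to the ideal condition.
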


\begin{proof}
(1) $\Rightarrow$ (2) is obvious after noting that $1 + gXg^{-1}  = g(1+X)g^{-1}$.  (2) $\Rightarrow$ (3) since (2) implies that 
\[AX=(1+A)X-X  \in \fkm\qquad\text{and}\qquad XA = (1+A)^{-1}((1+A)X) (1+A) -X \in \fkm\] for all $A \in \fkn$ and $X \in \fkm$. % (3) $\Rightarrow$ (1) since $gXh = X + (g-1)X + X(h-1) + (g-1)X(h-1)$.% for all $g,h \in U_\fkn$.  
(3) $\Rightarrow$ (4) since if $\fkm$ is a two-sided ideal, then $g\gamma(X) = \gamma(X) + \gamma((g^{-1}-1)X) = \gamma(X)$ and similarly $\gamma g(X) %= \gamma(X) + \gamma(X(g^{-1}-1)) 
= \gamma(X)$ for all $\gamma \in \fkm^\perp$, $X \in \fkn$, and $g \in U_\fkm$.  %Therefore $U_\fkm \gamma U_\fkm = \{\gamma\}$ for all $\gamma \in \fkm^\perp$.
(4) $\Rightarrow$ (5) since if (4) holds and $g \in U_\fkn$, $\gamma \in \fkm^\perp$, and $X \in \fkm$, then
\[ g\gamma(X) = \gamma\((g^{-1}-1)X\) +{ \gamma(X)} = \bigr({\gamma(1+X)^{-1}}-\gamma\bigr)(g^{-1}-1)= 0 \] and similarly $\gamma g(X) = 0$.  Finally, to show that (5) $\Rightarrow$ (1), suppose $U_\fkm$ is not supernormal, so that $Y \overset{\mathrm{def}}= g^{-1}Xh^{-1} \notin \fkm$ for some $g,h \in U_\fkn$ and $X \in \fkm$.  Choose a complementary subspace $\fk c \subset \fkn$ with $\fkm \subset \fk c$ such that $\fkn = \FF_q Y \oplus \fk c$ as a vector space.  If we define $\gamma \in \fkn^*$ by $\gamma(t Y + C) = t$ for $t\in \FF_q$ and $C \in \fk c$, then $\gamma \in \fkm^\perp$ but $g\gamma h \notin \fkm^\perp$ since $g\gamma h(X) = \gamma(Y) =1$.  Hence (5) $\Rightarrow$ (1).
%Conversely, if $U_\fkm$ is not supernormal in $U_\fkn$ then for some $G \in \fkm$ we have $G\fkn \not\subset \fkm$ or $\fkn G \not \subset \fkm$.  Assume the former case holds; the argument for the remaining case is symmetric.  Choose $Y \in G\fkn$ with $Y \notin \fkm$ and $Y' \in \fkn$ such that $Y = GY'$.  Then $Y$ and $Y'$ are linearly independent over $\FF_q$, since otherwise $G$ would fail to be nilpotent. Furthermore, $Y' \notin \fkm$ since $G \in \fkm$ but $Y \notin \fkm$.  Finally, $(\FF_q Y \oplus \FF_q Y' )\cap \fkm = \{0\}$.  To see this, note that if $(\FF_q Y \oplus \FF_q Y' )\cap \fkm$ were not zero then the subspace would necessarily be 1-dimensional and spanned by an element of the form $M = Y' - tY \in \fkm$ for some $t\in \FF_q^\times$.  In this case it follows by induction that 
%\[G^k Y = t^k Y +\sum_{j=0}^{k-1} t^j G^{k-j} M \in t^k Y +  \fkm,\qquad\text{for all $k>0$,}\] which poses a contradiction since for some sufficiently large $k$ we have $G^k Y = 0$ but $0 \notin t^k Y + \fkm$ for all $k$.  Hence $(\FF_q Y \oplus \FF_q Y' )\cap \fkm = \{0\}$, and so we can choose a complementary subspace $\fk c \subset \fkn$ with $\fkm \subset \fk c$ such that $\fkn = \FF_q Y \oplus \FF_q Y' \oplus \fk c$ as a vector space.  Then the functional $\gamma \in \fkn^*$ defined by $\gamma(sY+s'Y'+C) = s+s'$ for $s,s' \in \FF_q$ and $C \in \fk c$ has $\gamma \in \fkm^\perp$, but $(1-G)^{-1}\gamma(Y') = \gamma(-Y+Y') = 0 \neq \gamma(Y') = 1$. Hence $U_\fkm \gamma U_\fkm \neq \{\gamma\}$, so (5) $\Rightarrow$ (1).
  \end{proof}

This result nicely characterizes supernormal subgroups given by $\FF_q$-subalgebras: namely, these correspond to ideals of $\fkn$.  We cannot classify \emph{arbitrary} supernormal subgroups as easily.   The problem here is with the field $\FF_q$, since if $q$ is not prime then some supernormal subgroups may come from algebras over a subfield of $\FF_q$ rather than $\FF_q$ itself.  However, we do have the following compromise.

\begin{proposition}
Write $q = p^a$ where $p$ is prime and $a$ is a positive integer, and let $\fkn$ be a nilpotent $\FF_q$-algebra.   Then every supernormal subgroup $H$ of $U_\fkn$ is an algebra group over the subfield $\FF_p \subset \FF_q$, in the sense that the set $\fkh = \{ X \in \fkn : 1+X \in H\}$ is a nilpotent $\FF_p$-algebra.
\end{proposition}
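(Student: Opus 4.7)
The plan is to verify that $\fkh$ is closed under addition and under multiplication, then observe that the remaining two requirements come for free: since $\fkn$ has characteristic $p$, any additive subgroup of $\fkn$ is automatically $\FF_p$-scalar-closed, and nilpotency of $\fkh$ is inherited from that of $\fkn$. So the entire content lies in establishing additive and multiplicative closure.

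My first step would be to prove $\fkn\fkh \subset \fkh$ (and by symmetry $\fkh\fkn \subset \fkh$) without using any additive closure of $\fkh$, which we do not yet have. Fixing $X \in \fkh$ and $A \in \fkn$, supernormality of $H$ puts the two-sided orbit of $1+X$ inside $H$, so in particular $1 + (1+A)X = 1 + X + AX \in H$. Multiplying inside $H$ by $(1+X)^{-1}$ gives
\[
(1+X)^{-1}(1 + X + AX) \;=\; 1 + (1+X)^{-1} A X \;\in\; H,
\]
so $(1+X)^{-1}AX \in \fkh$. Because $(1+X)^{-1} = 1 + X'$ for some $X' \in \fkn$, the map $A \mapsto (1+X)^{-1}A$ is a bijection $\fkn \to \fkn$, and as $A$ varies the element $(1+X)^{-1}AX$ sweeps out all of $\fkn X$. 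Hence $\fkn X \subset \fkh$, and the symmetric argument yields $X\fkn \subset \fkh$. This already gives multiplicative closure, since for any $X, Y \in \fkh$ we have $XY \in \fkn Y \subset \fkh$.

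My second step would be to prove additive closure via the algebraic identity
\[
(1+X)(1+Y) \;=\; (1+X+Y)(1 + \alpha), \qquad \alpha := (1+X+Y)^{-1}XY,
\]
verified by a one-line expansion showing $(1+X+Y)\alpha = XY$. Rearranging, $1 + X + Y = (1+X)(1+Y)(1+\alpha)^{-1}$, so $1 + X + Y \in H$ as soon as $\alpha \in \fkh$. To see the latter, I would write $\alpha = \bigl((1+X+Y)^{-1}X\bigr)\cdot Y$; since $(1+X+Y)^{-1} \in U_\fkn$ we have $(1+X+Y)^{-1}X \in \fkn$, and then the ideal property $\fkn\fkh \subset \fkh$ from the first step forces $\alpha \in \fkh$. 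Thus $X + Y \in \fkh$, and closure under additive inverses then follows in characteristic $p$ from $-X = (p-1)X$ together with iterated addition.

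The main obstacle I anticipate is that Proposition \ref{supernormal-properties} identifies supernormal $\FF_q$-subalgebras with two-sided ideals of $\fkn$ by freely exploiting subspace closure---for instance, recovering $AX$ from the difference $(1+A)X - X$. Without any a priori subspace structure on $\fkh$, each such ``subtraction'' must be replaced by a product in the group $H$, which is precisely the role played by the factorization identity in the second step: it realizes $1 + X + Y$ as an honest product of three elements of $H$, so additive closure of $\fkh$ emerges as a consequence of, rather than a prerequisite for, the ideal property $\fkn \fkh \subset \fkh$.
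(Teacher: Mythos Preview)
Your proof is correct, and it takes a genuinely different route from the paper's argument. The paper invokes Proposition~\ref{sc-prop} to reduce to the case $H = \ker \chi_\fkn^\lambda$, then reads off from the supercharacter formula~(\ref{formula}) that $\fkh = \bigcap_{\mu \in U_\fkn \lambda U_\fkn} \ker(\theta\circ\mu)$; since each $\ker(\theta\circ\mu)$ is visibly an additive subgroup of $\fkn$, additive closure of $\fkh$ is immediate, and multiplicative closure then follows from the one-line observation $X+Y+XY \in \fkh \Rightarrow XY \in \fkh$. In other words, the paper gets additive closure first (for free, from the character formula) and deduces multiplicative closure from it.

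You reverse this order: you extract the ideal property $\fkn\fkh \subset \fkh$ directly from the definition of supernormality via a group-theoretic manipulation, obtain multiplicative closure as a special case, and then use the factorization $(1+X)(1+Y) = (1+X+Y)(1+\alpha)$ together with $\alpha \in \fkn\fkh \subset \fkh$ to recover additive closure. The trade-off is clear: the paper's proof is shorter because it leans on machinery already in place (Proposition~\ref{sc-prop} and the explicit supercharacter formula), while your argument is entirely self-contained---it never touches supercharacters at all, working purely from the superclass description of supernormality. Your approach also yields the stronger statement $\fkn\fkh \cup \fkh\fkn \subset \fkh$ along the way, which the paper does not isolate in this proof.
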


\begin{proof}
By Proposition \ref{sc-prop}, it suffices to show this when $H = \ker \chi_\fkn^\lambda$ for some $\lambda \in \fkn^*$.  In this case, (\ref{formula}) shows that  $\fkh = \bigcap_{\mu \in U_\fkn \lambda U_\fkn} \ker (\theta\circ \mu)$.  We must include $\theta$ here since if $q$ is not prime, then the nontrivial character $\theta$ on $\FF_q^+$ has a nontrivial kernel.  Clearly $\ker(\theta\circ \mu) = \{ X \in \fkn : \theta(\mu(X)) = 1\}$ is an additive group, however, so the same is true of $\fkh$ and this is enough to conclude that $\fkh$ is a $\FF_p$-vector space.  Consequently $\fkh$ is a nilpotent $\FF_p$-algebra since every $X \in \fkh \subset \fkn$ is nilpotent and
\[ X,Y \in \fkh\ \Rightarrow\ (1+X)(1+Y) = 1 + X + Y + XY \in H\ \Rightarrow\ X+Y+XY \in \fkh\ \Rightarrow\ XY \in \fkh.\] \end{proof}

Hence, if $q$ is prime then the supernormal subgroups of $U_\fkn$ are in bijection with the two-sided ideals in $\fkn$; specifically, every supernormal subgroup is then of the form $U_\fka$ for a two-sided ideal $\fka\subset \fkn$.  As a corollary, the proposition allows us to prove that the product of any two supernormal subgroups of an algebra group is supernormal.

\begin{corollary}
If $\fkn$ is a nilpotent $\FF_q$-algebra, then the product of any two supernormal subgroups of $U_\fkn$ is a supernormal subgroup.
\end{corollary}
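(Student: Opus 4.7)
The plan is to reduce the claim to a purely algebraic statement about the sets $\fkh_i = \{X \in \fkn : 1+X \in H_i\}$ associated to the two supernormal subgroups $H_1, H_2 \subseteq U_\fkn$. The preceding proposition furnishes each $\fkh_i$ with the structure of a nilpotent $\FF_p$-algebra, hence an additive subgroup of $\fkn$. Unpacking supernormality, which says $gXh \in \fkh_i$ for every $g,h \in U_\fkn$ and $X \in \fkh_i$, I would specialize $g = 1+A$ and $h = 1$ for arbitrary $A \in \fkn$ and then subtract $X$ from the resulting element of $\fkh_i$ to conclude $\fkn \fkh_i \subseteq \fkh_i$; the symmetric specialization gives $\fkh_i \fkn \subseteq \fkh_i$. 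Since supernormal subgroups are normal, $H_1 H_2 = H_2 H_1$ is automatically a subgroup of $U_\fkn$, so only supernormality remains to check.

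The main step is to establish the identity
\[ H_1 H_2 = \{1 + Z : Z \in \fkh_1 + \fkh_2\}, \]
where $\fkh_1 + \fkh_2$ denotes the ordinary sum of additive subgroups. For the inclusion $\subseteq$, I expand $(1+X)(1+Y) = 1 + (X + XY) + Y$ with $X \in \fkh_1$, $Y \in \fkh_2$ and use $\fkh_1\fkn \subseteq \fkh_1$ to see that $XY \in \fkh_1$, whence $X + XY \in \fkh_1$ and the resulting element lies in $\{1+Z : Z \in \fkh_1 + \fkh_2\}$. The reverse inclusion is the heart of the argument: given $X \in \fkh_1$ and $Y \in \fkh_2$, I need to realize $1 + X + Y$ as a product of an element of $H_1$ and an element of $H_2$. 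Solving $(1+X)(1+Y') = 1 + X + Y$ for $Y'$ forces $Y' = (1+X)^{-1}Y$, which equals $Y + WY$ for $W = (1+X)^{-1} - 1 \in \fkn$, so $Y' \in \fkh_2$ by the left stability $\fkn \fkh_2 \subseteq \fkh_2$. I expect this reverse inclusion to be the only nonroutine step; the rest is bookkeeping using the preceding proposition.

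With the identity in hand, supernormality of $H_1 H_2$ follows immediately. Writing arbitrary $g,h \in U_\fkn$ as $g = 1+A$ and $h = 1+B$, any $Z \in \fkh_1 + \fkh_2$ satisfies
\[ gZh = Z + AZ + ZB + AZB, \]
and each of the four summands lies in $\fkh_1 + \fkh_2$ because this set is an additive subgroup closed under two-sided multiplication by $\fkn$ (inherited termwise from $\fkh_1$ and $\fkh_2$). Hence the superclass of every $1+Z \in H_1 H_2$ is contained in $H_1 H_2$, which is exactly the condition that $H_1 H_2$ is supernormal.
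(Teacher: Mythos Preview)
Your proof is correct and follows essentially the same strategy as the paper: identify $H_1H_2 - 1$ with the additive subgroup $\fkh_1 + \fkh_2$, then verify closure under the two-sided $U_\fkn$-action. The only real difference is in how you establish the identity $H_1H_2 = 1 + (\fkh_1 + \fkh_2)$. The paper proves just the inclusion $H_1H_2 - 1 \subseteq \fkh_1 + \fkh_2$ (exactly as you do) and then gets equality from a cardinality count, using that $\fkh_1,\fkh_2$ are additive groups so $|\fkh_1+\fkh_2| = \tfrac{|\fkh_1||\fkh_2|}{|\fkh_1\cap\fkh_2|} = \tfrac{|H_1||H_2|}{|H_1\cap H_2|} = |H_1H_2|$. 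You instead construct the factorization explicitly, solving $(1+X)(1+Y') = 1+X+Y$ for $Y' = (1+X)^{-1}Y \in \fkh_2$. Both arguments work; the counting is slightly slicker, while your explicit inverse makes the identity more transparent and does not rely on finiteness.
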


\begin{proof}
Suppose $H,H' \subset U_\fkn$ are supernormal subgroups. Let $\fkh = \{ X \in \fkn : 1+X \in H\}$ and $\fkh' = \{ X + \fkn : 1 +X \in H'\}$, and write $\fk p = \{ X \in \fkn : 1+X \in HH'\}$.   Since $\fkh$ and $\fkh'$ are both additive groups by the proposition, we have $|\fkh + \fkh'| = \frac{|\fkh||\fkh'|}{|\fkh \cap \fkh'|} = \frac{|H||H'|}{|H\cap H'|} = |HH'| = |\fk p|$.  On the other hand, $\fk p \subset \fkh+\fkh'$ since if $1+X \in HH'$ then $X = Y+(1+Y)Y'$ for some $Y \in \fkh$ and $Y' \in \fkh'$, in which case $(1+Y)Y' \in \fkh'$ by supernormality.  It follows that $\fk p = \fkh + \fkh'$.  Consequently $HH'$ is a union of superclasses, since if $x \in HH'$ then $x = 1 + Y + Y'$ for some $Y \in \fkh$ and $Y' \in \fkh'$, so for all $g,h \in U_\fkn$, $1 + g(x-1)h = 1 + gYh + gY'h \in HH'$ since $gYh \in \fkh$ and $gY'h \in \fkh'$. 
\end{proof}

Analogous to the usual irreducible character theory, we have a notion of a supercharacter {lifted} from a quotient by a supernormal subgroup.  Suppose $\fkm \subset \fkn$ are nilpotent $\FF_q$-algebras with $U_\fkm$ supernormal in $U_\fkn$.  Then $\fkm$ is an ideal in $\fkn$, so the quotient $\fkn/\fkm$ is a well defined $\FF_q$-algebra.  For each supercharacter $\chi$ of the algebra group $U_{\fkn / \fkm}$, we define its \emph{lift} to $U_\fkn$ as the function $\wt \chi : U_\fkn \rightarrow \CC$ given by
\[ \wt\chi(1+X) = \chi\(1+(X+\fkm)\),\quad\text{for }1+X\in U_\fkn.\]  
This function is unsurprisingly a supercharacter of $U_\fkn$, a result which we state this formally as the following proposition.

\begin{proposition}\label{lift-prop}   Suppose $\fkm \subset \fkn$ are nilpotent $\FF_q$-algebras with $U_\fkm$ supernormal in $U_\fkn$.  If $\chi$ is a supercharacter of $U_{\fkn/\fkm}$, then its lift $\wt \chi$ is a supercharacter of $U_\fkn$ with the same degree as $\chi$.  Furthermore, $\wt \chi$ is irreducible if and only if $\chi$ is irreducible, and the map $\chi \mapsto \wt \chi$ gives a bijection between the set of supercharacters of $U_{\fkn/\fkm}$ and the subset of supercharacters of $U_{\fkn}$ whose kernels contain $U_\fkm$. 
\end{proposition}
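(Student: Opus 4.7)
The plan is to realize the lift operation as nothing more than the canonical identification $(\fkn/\fkm)^* \cong \fkm^\perp$, where $\fkm^\perp = \{\gamma \in \fkn^* : \gamma(\fkm) = 0\}$. I expect that under this identification, the lift of the supercharacter $\chi^{\lambda'}_{\fkn/\fkm}$ of $U_{\fkn/\fkm}$ (indexed by $\lambda' \in (\fkn/\fkm)^*$) coincides with $\chi^\lambda_\fkn$ for the corresponding functional $\lambda \in \fkm^\perp \subset \fkn^*$. Every assertion in the proposition will then follow by reading off features of the two supercharacter formulas side by side.

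To verify this identification, I would first invoke Proposition \ref{supernormal-properties}(4), which gives $U_\fkm \gamma U_\fkm = \{\gamma\}$ for every $\gamma \in \fkm^\perp$. This says $U_\fkm$ acts trivially (on both sides) on $\fkm^\perp$, so the $U_\fkn$-actions on $\fkm^\perp$ factor through $U_\fkn/U_\fkm \cong U_{\fkn/\fkm}$; in particular the left, right, and two-sided $U_\fkn$-orbits of $\lambda \in \fkm^\perp$ have the same sizes as the corresponding $U_{\fkn/\fkm}$-orbits of $\lambda'$, and they are in bijection under the identification $\fkm^\perp \leftrightarrow (\fkn/\fkm)^*$. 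Substituting into (\ref{formula}) and using $\mu(X) = \mu'(X+\fkm)$ for $\mu \in \fkm^\perp$ then gives $\chi^\lambda_\fkn(1+X) = \chi^{\lambda'}_{\fkn/\fkm}(1+(X+\fkm)) = \wt{\chi^{\lambda'}_{\fkn/\fkm}}(1+X)$, proving that the lift is a supercharacter. The degree equality $\chi^\lambda_\fkn(1) = |U_\fkn\lambda| = |U_{\fkn/\fkm}\lambda'| = \chi^{\lambda'}_{\fkn/\fkm}(1)$ is then immediate from the orbit correspondence, and irreducibility transfers because $\chi^\lambda_\fkn$ is irreducible exactly when $U_\fkn \lambda \cap \lambda U_\fkn = \{\lambda\}$, a condition matched by the analogous criterion for $\chi^{\lambda'}_{\fkn/\fkm}$.

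For the bijective correspondence, well-definedness and injectivity follow because distinct two-sided $U_{\fkn/\fkm}$-orbits in $(\fkn/\fkm)^*$ yield distinct two-sided $U_\fkn$-orbits in $\fkm^\perp$, hence distinct supercharacters $\chi^\lambda_\fkn$; and every lift $\wt\chi$ has $U_\fkm$ in its kernel by construction. The main obstacle is surjectivity: I must show that every supercharacter $\chi^\mu_\fkn$ whose kernel contains $U_\fkm$ already arises as a lift, which amounts to proving that the entire two-sided orbit $U_\fkn \mu U_\fkn$ sits inside $\fkm^\perp$. Evaluating $\chi^\mu_\fkn(1+X) = \chi^\mu_\fkn(1)$ at $X \in \fkm$ and applying (\ref{formula}) reduces to
\[ \sum_{\nu \in U_\fkn \mu U_\fkn} \theta \circ \nu(X) = |U_\fkn \mu U_\fkn|, \]
and since the left-hand side is a sum of $|U_\fkn \mu U_\fkn|$ complex numbers of modulus $1$, equality forces each summand to be $1$; hence $\nu(X) \in \ker \theta$ for every such $\nu$ and every $X \in \fkm$. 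To convert this into $\nu \in \fkm^\perp$, I would observe that $\nu\downarrow\fkm$ is $\FF_q$-linear, so its image is an $\FF_q$-subspace of $\FF_q$ contained in $\ker \theta$; nontriviality of $\theta$ precludes the full subspace $\FF_q$, so $\nu(\fkm) = \{0\}$, completing the argument.
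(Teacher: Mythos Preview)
Your proof is correct and takes a genuinely different route from the paper's.

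The paper first invokes classical character lifting: it identifies $U_\fkn/U_\fkm \cong U_{\fkn/\fkm}$ and observes that $\wt\chi$ is the ordinary lift of $\chi\circ\varphi$ from the quotient group, so degree preservation, irreducibility, and $\ker\wt\chi \supset U_\fkm$ follow immediately from standard facts about lifts. It then verifies the single identity $\wt{\chi_{\fkn/\fkm}^\lambda} = \chi_\fkn^{\lambda'}$ (where $\lambda'(X) = \lambda(X+\fkm)$) to see that lifts of supercharacters are supercharacters. For the bijection, the paper argues abstractly: lifting is a linear bijection from class functions of $U_{\fkn/\fkm}$ onto class functions of $U_\fkn$ with kernel containing $U_\fkm$, and it carries superclass functions to superclass functions in both directions, so supercharacters must correspond to supercharacters.

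You instead stay entirely within the supercharacter formalism, making the identification $(\fkn/\fkm)^* \cong \fkm^\perp$ explicit and using Proposition~\ref{supernormal-properties}(4)--(5) to transport orbits; degree and irreducibility then drop out of the orbit-size equalities rather than from classical lifting theory. Your surjectivity argument is also more concrete: the modulus-$1$ trick applied to (\ref{formula}) forces $\nu(\fkm) \subset \ker\theta$ for every $\nu$ in the orbit, and the $\FF_q$-linearity observation upgrades this to $\nu \in \fkm^\perp$. This is a nice direct substitute for the paper's superclass-function counting, and it makes the role of the nontriviality of $\theta$ transparent. The paper's approach buys brevity by outsourcing to classical results; yours is more self-contained and illustrates exactly how the supercharacter data on the two groups match up.
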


\begin{proof}
We have a canonical isomorphism $\varphi : U_\fkn /U_\fkm \rightarrow U_{\fkn /\fkm} $ defined by $\varphi\((1+X)U_\fkm\) =1 + (X+\fkm)$ for $X \in \fkm$. This follows because $U_\fkn / U_\fkm$  and $U_{\fkn/\fkm}$ have the same order and because $\varphi$ is an injective homomorphism due to the fact that for all $X,Y \in \fkm$, 
\[ (1+X)(1+Y)^{-1} = 1 + (X-Y)(1+Y)^{-1} \in U_\fkm \ \Leftrightarrow\ (X-Y)(1+Y)^{-1} \in \fkm \ \Leftrightarrow\ X-Y \in \fkm.\]  
If $\chi$ is a supercharacter of $U_{\fkn/\fkm}$, then $\chi \circ \varphi$ is a character of $U_{\fkn}/U_{\fkm}$, and by definition the supercharacter lift $\wt \chi$ is equal to usual lift of the character $\chi\circ \varphi$ from the quotient group $U_\fkn / U_\fkm$ to $U_\fkn$.  

It follows from this observation and elementary properties of lifted characters (see \cite[Chapter 17]{JL}) that $\wt\chi$ is a character of $U_\fkn$ with the same degree as $\chi$ whose kernel contains $U_\fkm$, and that $\wt \chi$ is irreducible if and only if $\chi$ is irreducible.  To see that $\wt\chi$ is a supercharacter of $U_\fkn$, one can check that if $\chi = \chi_{\fkn/\fkm}^\lambda$ for some $\lambda \in (\fkn/\fkm)^*$ then $\wt \chi = \chi_\fkn^{\lambda'}$ where $\lambda' \in \fkn^*$ is defined by $\lambda'(X) = \lambda(X+\fkm)$.

If we extend the definition of $\wt \chi$ to any character $\chi$ of $U_{\fkn/\fkm}$, then the map $\chi \mapsto \wt \chi$ gives a linear bijection from the set of all characters of $U_{\fkn/\fkm}$ to the subset of characters of $U_\fkn$ whose kernels contain $U_\fkm$.  Since for an arbitrary character $\chi$ of $U_{\fkn/\fkm}$, $\wt \chi$ is a superclass function of $U_\fkn$ if and only if $\chi$ is a superclass of function of $U_{\fkn/\fkm}$, it follows that that $\chi \mapsto \wt \chi$ restricts to a bijection from the set of supercharacters of $U_{\fkn/\fkm}$ to the subset of supercharacters of $U_{\fkn}$ whose kernels contain $U_\fkm$. 
\end{proof}

\section{Supernormal Pattern Subgroups}\label{supernormal2-sect}

We can say something quite definite about when a pattern subgroup is supernormal in another pattern group, and results of this kind occupy the initial parts of this section.  Our main result, however, is the classification of all supernormal algebra subgroups in $U_n(q)$ when $q$ is prime, and this occupies the rest of the section.

To begin, we observe that for pattern groups, the definitions of normal and supernormal are equivalent.

\begin{proposition}\label{normal=supernormal}
 If $\cP \subset \cQ$ are posets with $U_\cP \vartriangleleft U_\cQ$, then $U_\cP$ is supernormal in $U_\cQ$.\end{proposition}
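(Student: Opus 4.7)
The plan is to invoke Proposition 3.1, which says that $U_\cP$ is supernormal in $U_\cQ$ if and only if $\fkn_\cP$ is a two-sided ideal in $\fkn_\cQ$. For pattern algebras, the basis $\{e_{ij}:(i,j)\in\cQ\}$ of $\fkn_\cQ$ reduces this to a purely combinatorial closure condition on the underlying posets: since $e_{ab}e_{cd}=\delta_{bc}\,e_{ad}$, the ideal property is equivalent to the two implications
\[
(k,i)\in\cQ \text{ and } (i,j)\in\cP \ \Longrightarrow\ (k,j)\in\cP,\qquad
(i,j)\in\cP \text{ and } (j,l)\in\cQ \ \Longrightarrow\ (i,l)\in\cP.
\]
So the task reduces to extracting these two implications from the normality hypothesis $U_\cP\vartriangleleft U_\cQ$.

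To do so, I would test normality on the elementary generators. Fix $(i,j)\in\cP$, $s\in\FF_q^\times$, and consider $x=1+se_{ij}\in U_\cP$. For the first implication, given $(k,i)\in\cQ$ and $t\in\FF_q$, set $g=1+te_{ki}\in U_\cQ$, so $g^{-1}=1-te_{ki}$. A direct expansion, using that $e_{ki}e_{ij}=e_{kj}$ while $e_{ki}^2=e_{ij}e_{ki}=e_{kj}e_{ki}=0$ (all forced by $k<i<j$), gives
\[
gxg^{-1} \;=\; 1 + se_{ij} + ts\,e_{kj}.
\]
Since this lies in $U_\cP$ for every $t$, we must have $(k,j)\in\cP$. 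For the second implication, given $(j,l)\in\cQ$, set $g=1+te_{jl}\in U_\cQ$; the analogous computation, using $e_{ij}e_{jl}=e_{il}$ and $e_{jl}e_{ij}=e_{jl}^2=0$, yields
\[
gxg^{-1} \;=\; 1 + se_{ij} - ts\,e_{il},
\]
forcing $(i,l)\in\cP$. Together these two implications say exactly that $\fkn_\cP$ is closed under left and right multiplication by $\fkn_\cQ$, i.e.\ that $\fkn_\cP$ is a two-sided ideal of $\fkn_\cQ$, and Proposition 3.1 then gives supernormality.

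I do not anticipate any real obstacle here: the hardest bookkeeping is simply confirming that each unwanted product of elementary matrices vanishes, which follows in every case from the strict inequalities forced by $\cP,\cQ\subset[[n]]$. The only subtlety worth a line is pointing out that $U_\cQ$ is generated by the transvections $\{1+te_{kl}:(k,l)\in\cQ,\ t\in\FF_q\}$, so that testing normality on these generators as above suffices; but since we only need normality as a hypothesis producing specific conjugates, even this point can be absorbed by simply observing that $1+te_{kl}\in U_\cQ$ whenever $(k,l)\in\cQ$.
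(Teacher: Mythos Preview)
Your proof is correct and follows essentially the same strategy as the paper: extract the poset closure conditions from $U_\cP\vartriangleleft U_\cQ$ by conjugating elementary transvections, then feed these conditions into Proposition~3.1. The only organizational difference is that the paper isolates the equivalence $U_\cP\vartriangleleft U_\cQ \Leftrightarrow \cP\vartriangleleft\cQ$ as a separate lemma (Lemma~4.1) and then invokes condition~(2) of Proposition~3.1 (so that, with normality already in hand, only the one-sided closure $gX\in\fkn_\cP$ needs to be checked), whereas you fold the conjugation computation directly into the proof and invoke condition~(3), checking both left and right ideal closure. The computations themselves are identical.
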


This result is Lemma 3.2 in \cite{MT09}, but we provide a short, direct proof below.  First, we translate the property $U_\cP\vartriangleleft U_\cQ$ into a condition on the posets $\cP\subset \cQ$.  If $\cP,\cQ$ are two posets on $[n]$ with $\cP\subset \cQ$, then we say that $\cP$ is \emph{normal} in $\cQ$, and write $\cP\vartriangleleft \cQ$, if the following conditions hold:
\begin{enumerate}
\item[(1)] If $(i,j) \in \cQ$ and $(j,k) \in \cP$ then $(i,k)\in \cP$.
\item[(2)] If $(j,k) \in \cQ$ and $(i,j) \in \cP$ then  $(i,k) \in \cP$.  
\end{enumerate}
Of course, our reason for adopting this notation has much to do with the following lemma.

\begin{lemma} \label{normal}
 If $\cP\subset \cQ$ are posets on $[n]$, then $U_\cP \vartriangleleft U_\cQ$ if and only if $\cP \vartriangleleft \cQ$.
\end{lemma}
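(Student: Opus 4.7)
The plan is to reduce normality to stability under conjugation by the elementary transvections $1+te_{ij}$ for $(i,j)\in\cQ$ and $t\in\FF_q$, since these generate $U_\cQ$. Once the reduction is made, both directions of the equivalence should follow from a single direct calculation of such a conjugation applied to an arbitrary element $1+X\in U_\cP$.

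First, I would note that for $i<j$ one has $e_{ij}^2=0$, so $(1+te_{ij})^{-1}=1-te_{ij}$, and moreover $e_{ij}Xe_{ij}=0$ for every $X\in\fkn_n(q)$ because this matrix's only potentially nonzero entry is the $(i,j)$-entry, which equals $X_{ji}=0$. Expanding the product $(1+te_{ij})(1+X)(1-te_{ij})$ and using these facts collapses the conjugation formula to
\[(1+te_{ij})(1+X)(1+te_{ij})^{-1} \;=\; 1 + X + t\bigl(e_{ij}X-Xe_{ij}\bigr).\]
Next, I would examine supports: $e_{ij}X$ has $i$-th row equal to the $j$-th row of $X$ (and zero elsewhere), so its support is $\{(i,l):(j,l)\in\supp(X)\}$; similarly $Xe_{ij}$ has support $\{(k,j):(k,i)\in\supp(X)\}$.

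For the $(\Leftarrow)$ direction, I would assume $\cP\vartriangleleft\cQ$ and check using the two poset conditions that whenever $X\in\fkn_\cP$ and $(i,j)\in\cQ$, every position in $\supp(e_{ij}X)\cup\supp(Xe_{ij})$ lies in $\cP$; hence $1+X+t(e_{ij}X-Xe_{ij})\in U_\cP$. Since $U_\cQ$ is generated by the transvections $1+te_{ij}$ with $(i,j)\in\cQ$, this suffices to conclude $U_\cP\vartriangleleft U_\cQ$.

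For the $(\Rightarrow)$ direction, assume $U_\cP\vartriangleleft U_\cQ$ and, given $(i,j)\in\cQ$ and $(j,l)\in\cP$, apply the conjugation formula to $1+e_{jl}\in U_\cP$ to get $1+e_{jl}+te_{il}\in U_\cP$ for every $t\in\FF_q^\times$; since $U_\cP=\{1+Y:Y\in\fkn_\cP\}$, this forces $(i,l)\in\cP$, giving condition (1). The analogous argument applied to $1+e_{ki}\in U_\cP$ with $(j,k)\in\cQ$, using the $Xe_{ij}$ term, yields condition (2). The only step requiring real care will be verifying that $e_{ij}Xe_{ij}=0$ and that no cross-terms sneak into the conjugation formula, but beyond that the proof is essentially a direct support computation.
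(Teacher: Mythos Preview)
Your proposal is correct and follows essentially the same argument as the paper: reduce to conjugation by elementary transvections $1+te_{ij}$, derive the formula $(1+te_{ij})(1+X)(1-te_{ij})=1+X+t(e_{ij}X-Xe_{ij})$, and read off the poset conditions from the supports of $e_{ij}X$ and $Xe_{ij}$. The only slip is the indexing in your final sentence---to verify condition~(2) you should conjugate $1+e_{ij}\in U_\cP$ (not $1+e_{ki}$) by the transvection $1+te_{jk}$ with $(j,k)\in\cQ$, obtaining $1+e_{ij}-te_{ik}$ and hence $(i,k)\in\cP$; with this correction your argument matches the paper's exactly.
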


\begin{proof}
 Suppose $\cP \vartriangleleft \cQ$ and $X \in \fkn_\cP$.  If $(i,j) \in \cQ$, then $e_{ij}X$ has nonzero entries only in the $i$th row, and $(e_{ij}X)_{ik} \neq 0 \Rightarrow X_{jk}\neq 0 \Rightarrow (j,k) \in \cP \Rightarrow (i,k) \in \cP$, so $e_{ij}X \in \fkn_\cP$.  Likewise, if $(j,k) \in \cQ$, then $Xe_{jk}$ has nonzero entries only in the $k$th column, and $(Xe_{jk})_{ik} \neq 0 \Rightarrow X_{ij} \neq 0\Rightarrow (i,j) \in \cP \Rightarrow (i,k) \in \cP$, so $Xe_{jk} \in \fkn_\cP$.  Hence if $x = 1+te_{ij} \in U_{\cQ}$ for some $(i,j) \in \cQ$ and $t\in \FF_q^\times$, then \[x(1+X)x^{-1} = 1 + X + te_{ij}X -t Xe_{ij} \in U_\cP.\]  Since elements of the form $x$ generate $U_\cQ$, $U_\cP \vartriangleleft U_\cQ$.

Now suppose $U_\cP \vartriangleleft U_\cQ$.  If $(j,k) \in \cP$ and $(i,j) \in \cQ$, then for $x=1+e_{ij} \in U_\cQ$ and $y=1+e_{jk} \in U_\cP$, we have $xyx^{-1} = 1+e_{jk} + e_{ik} \in U_\cP$ so $(i,k) \in \cP$.  Likewise, if $(i,j) \in\cP$ and $(j,k) \in \cQ$ then for $x = 1 + e_{jk} \in U_\cQ$ and $y= 1 + e_{ij} \in U_\cP$ we have $x^{-1}yx = 1 + e_{ij} + e_{ik} \in U_\cP$ so $(i,k)\in \cP$.  Hence $\cP\vartriangleleft \cQ$.
\end{proof}
%
%After unwinding the definitions in the proposition, we have the following corollary, which classifies the normal pattern subgroups of $ U_n(q)$:

%\begin{corollary}\label{normal-in-U_n(q)} If $\cP$ is a poset on $[n]$, then $U_\cP \vartriangleleft  U_n(q)$ %if and only if $(j,k) \in \cP $ implies $(i,l) \in \cP$ for all $1\leq i\leq j<k\leq l \leq n$.   Equivalently, $U_\cP \vartriangleleft  U_n(q)$ 
%if and only if $(i,l) \notin \cP$ implies  $(j,k)\notin \cP$ for all $1\leq i\leq j < k \leq l \leq n$.
% \end{corollary}

Using this lemma, we have the following proof of Proposition \ref{normal=supernormal}.

\begin{proof}[Proof of Proposition \ref{normal=supernormal}]
 Given (2) of Proposition \ref{supernormal-properties}, it suffices to show that $gX \in \fkn_\cP$ for all $g \in U_\cQ$.  For this, we simply observe that if $(i,k) \notin\cP$ then for all $(i,j) \in \cQ$, we must have $(j,k)\notin\cP$ since $\cP \vartriangleleft \cQ$.  Hence if $(i,k) \notin\cP$ then $(gX)_{ik} = \sum_{j} g_{ij}X_{jk}= 0$ so $gX \in \fkn_\cP$.
\end{proof}

As a corollary, we have a correspondence between normal posets and nilpotent ideals.  Given a poset $\cQ$ on $[n]$, let $\fk t_\cQ$ denote the incidence algebra of $n\times n$ upper triangular matrices $X$ over $\FF_q$ such that $X_{ij} = 0$ if $i\neq j$ and $(i,j) \notin \cQ$.  In other words, $\fk t_\cQ$ is the algebra of $n\times n$ matrices over $\FF_q$ of the form $X= D + Y$ where $D$ is diagonal and $Y \in \fkn_\cQ$.  We now have the following result.

\begin{corollary} If $\cQ$ is a poset on $[n]$, then $\fka$ is a nilpotent two-sided ideal in $\fk t_\cQ$ if and only if $\fka = \fkn_\cP$ for some poset $\cP\vartriangleleft \cQ$.
\end{corollary}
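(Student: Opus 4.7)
The plan is to prove both directions by exploiting the fact that $\fk t_\cQ$ contains all the diagonal matrix units $e_{ii}$, so that any two-sided ideal $\fka$ is automatically closed under extraction of individual entries via $e_{ii} X e_{jj} = X_{ij} e_{ij}$. This reduces the entire statement to combinatorial bookkeeping on the set $\cP \overset{\mathrm{def}}= \{(i,j) : e_{ij} \in \fka\}$.

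For the backward direction, suppose $\cP \vartriangleleft \cQ$. By Lemma \ref{normal} and Proposition \ref{normal=supernormal}, $U_\cP$ is supernormal in $U_\cQ$, so by Proposition \ref{supernormal-properties}(3) the space $\fkn_\cP$ is a two-sided ideal in $\fkn_\cQ$. To promote this to an ideal in $\fk t_\cQ = \mathrm{diag} + \fkn_\cQ$, I would observe that for any diagonal matrix $D = \sum_i d_i e_{ii}$ and any $X \in \fkn_\cP$, both $DX$ and $XD$ merely rescale rows or columns of $X$, so they remain in $\fkn_\cP$. Combined with $\fkn_\cQ \cdot \fkn_\cP, \fkn_\cP \cdot \fkn_\cQ \subset \fkn_\cP$, this shows $\fk t_\cQ \cdot \fkn_\cP \subset \fkn_\cP$ and likewise on the right. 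Nilpotence of $\fkn_\cP$ is immediate since every element is strictly upper triangular.

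For the forward direction, let $\fka$ be a nilpotent two-sided ideal in $\fk t_\cQ$. First I would argue $\fka \subset \fkn_\cQ$: since upper triangular matrices have their diagonal entries as eigenvalues, any nilpotent element of $\fk t_\cQ$ must have zero diagonal. Next, for each $X \in \fka$ and each pair $(i,j)$, the product $e_{ii} X e_{jj} = X_{ij} e_{ij}$ lies in $\fka$ (since $e_{ii}, e_{jj} \in \fk t_\cQ$), so $X_{ij} \neq 0$ forces $e_{ij} \in \fka$. Hence $\fka$ is spanned by its matrix units, and setting $\cP = \{(i,j) : e_{ij} \in \fka\} \subset \cQ$ gives $\fka = \fkn_\cP$ as a vector space.

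It remains to check $\cP$ is actually a poset and satisfies $\cP \vartriangleleft \cQ$. All three required implications come from the single identity $e_{ij} e_{jk} = e_{ik}$ applied to the two-sided ideal property: if $(i,j), (j,k) \in \cP$ then $e_{ij} \cdot e_{jk} = e_{ik} \in \fka$, giving transitivity and hence that $\cP$ is a poset; if $(i,j) \in \cQ$ and $(j,k) \in \cP$ then $e_{ij} \in \fk t_\cQ$ times $e_{jk} \in \fka$ lies in $\fka$, giving condition (1) of $\cP \vartriangleleft \cQ$; and symmetrically for condition (2). There is no serious obstacle here — the proof is essentially a bookkeeping exercise once one notices the $e_{ii} X e_{jj}$ trick, which is the only genuinely new observation beyond what Proposition \ref{supernormal-properties} and Lemma \ref{normal} already provide.
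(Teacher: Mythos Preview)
Your proof is correct and follows essentially the same approach as the paper's: both directions hinge on the observation that $e_{ii}Xe_{jj} = X_{ij}e_{ij}$ forces $\fka$ to be spanned by matrix units, after which the identity $e_{ij}e_{jk}=e_{ik}$ handles the poset and normality conditions. Your treatment is slightly more explicit in justifying why a nilpotent ideal lies in $\fkn_\cQ$ and why the diagonal part of $\fk t_\cQ$ preserves $\fkn_\cP$, but the substance is identical.
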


\begin{remark}
This result characterizes normal pattern subgroups.  In odd characteristic, one can similarly characterize \emph{all} pattern subgroups as those subgroups of $\GL(n,\FF_q)$ invariant under conjugation by the subgroup of diagonal matrices; see \cite[Proposition 2.1]{DT09}.
\end{remark}

\begin{proof}
Proposition \ref{normal=supernormal} and Lemma \ref{normal} show that $\fkn_\cP$ is a nilpotent two-sided ideal in $\fk t_\cQ$ if $\cP\vartriangleleft \cQ$.  Conversely, suppose $\fka \subset \fk t_\cQ$ is a nilpotent two-sided ideal.  Then necessarily $\fka\subset \fkn_\cQ$, and if some $X \in \fka$ has $X_{jk} \neq 0$ for $(j,k) \in \cQ$ then $\fka \supset \FF_q\spanning\{e_{jk}\}$ since $te_{jk} = \frac{t}{X_{jk}} e_{jj} X e_{kk} \in \fka$.  Furthermore, if $(i,j) \in \cQ$ and $e_{jk} \in \fka$ or  if $(j,k) \in \cQ$ and $e_{ij} \in \fka$ then $e_{ik} \in \fka$, since $e_{ik} = e_{ij}e_{jk} \in \fka$.  It follows that the set $\cP = \{ (i,j) \in\cQ : \exists X \in \fka\text{ with }X_{ij} \neq 0\}$ is a poset with $\cP\vartriangleleft\cQ$, and that $\fka = \fkn_\cP$.
\end{proof}

This result and \cite[Proposition 2]{S75} show that the number of normal subposets $\cP\vartriangleleft [[n]] $ is the $n$th Catalan number $C_n = \frac{1}{n+1}\binom{2n}{n}$.  We will see this directly later.  More generally, we can easily classify all normal subposets of a given poset $\cP$.  To do this, we define a strict partial ordering $\prec_\cP$ of $\cP$ itself by setting
\be\label{poset-partial-order}\ba  (j,k) &\prec_\cP (i,k)&\text{ iff }(i,j) \in \cP \\
(j,k) &\prec_\cP (j,l)&\text{ iff }(k,l) \in \cP
\ea\qquad\text{for $(j,k),(i,k), (j,l) \in \cP$,}
\ee and extending transitively (see Section 3.1 of \cite{DT09} for more details).  For example, if $\cP = [[n]]$ then $(j,k) \preceq_{[[n]]} (i,l)$ if and only if $i\leq j < k \leq l$.

Now suppose $\cP \vartriangleleft \cQ$ are posets.  If $(i,k) \in \cQ - \cP$, then  $(j,k) \notin\cP$ whenever $(i,j) \in \cQ$ and $(i,j) \notin \cP$ whenever $(j,k) \in \cP$.  Comparing this with (\ref{poset-partial-order}), we see that if $(k,l) \in \cQ - \cP$, then $(i,j)\notin \cP$ for all $(i,j) \prec_\cQ (k,l)$.  Formalizing this observation gives the following lemma.

\begin{lemma}\label{normal2partial-order} If $\cP \subset \cQ$ are posets, then $\cP \vartriangleleft \cQ$ if and only if $(k,l) \in \cQ-\cP$ implies $(i,j) \notin \cP$ for all $(i,j) \preceq_\cQ (k,l)$.
\end{lemma}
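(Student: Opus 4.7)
The plan is to prove both directions by reducing to the covering relations of the partial order $\prec_\cQ$. Recall that $\prec_\cQ$ is defined by the two cover-type relations $(j,k) \prec_\cQ (i,k)$ when $(i,j) \in \cQ$, and $(j,k) \prec_\cQ (j,l)$ when $(k,l) \in \cQ$, extended transitively. Consequently, the statement ``$(i,j) \in \cP$ whenever $(i,j) \preceq_\cQ (k,l)$ implies $(k,l) \in \cP$'' (the contrapositive of the condition in the lemma) is equivalent to saying that $\cP$ is an up-set in $(\cQ, \prec_\cQ)$, and it suffices to verify closure under each of the two cover relations.

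For the forward direction, I would assume $\cP \vartriangleleft \cQ$ and consider an element $(j,k) \in \cP$ together with a cover $(j,k) \prec_\cQ (i,k)$ or $(j,k) \prec_\cQ (j,l)$. The first cover exists precisely when $(i,j) \in \cQ$, and condition (1) of normality then yields $(i,k) \in \cP$. The second cover exists precisely when $(k,l) \in \cQ$, and condition (2) applied with the roles $(i,j)_{\text{(2)}} = (j,k)$ and $(j,k)_{\text{(2)}} = (k,l)$ yields $(j,l) \in \cP$. Transitivity of $\preceq_\cQ$ then propagates this to arbitrary successors.

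For the converse, I would assume the up-set condition and verify conditions (1) and (2) in the definition of $\cP \vartriangleleft \cQ$. Given $(i,j) \in \cQ$ and $(j,k) \in \cP$, the poset axiom on $\cQ$ gives $(i,k) \in \cQ$, and the definition of $\prec_\cQ$ gives $(j,k) \prec_\cQ (i,k)$; if $(i,k)$ were in $\cQ - \cP$, the hypothesis would force $(j,k) \notin \cP$, a contradiction, so $(i,k) \in \cP$. The verification of (2) proceeds identically with the second type of cover relation.

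I do not expect a serious obstacle: the argument is essentially a bookkeeping exercise that translates the two clauses of normality into the two covering relations defining $\prec_\cQ$. The one small subtlety worth flagging cleanly is that one must invoke transitivity of $\cQ$ (a poset) to conclude $(i,k) \in \cQ$ in the backward direction before one can meaningfully ask whether $(i,k) \in \cQ - \cP$, so that the hypothesis applies at all.
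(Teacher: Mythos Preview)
Your proposal is correct and follows essentially the same approach as the paper: both arguments reduce normality to closure under the two generating cover relations of $\prec_\cQ$ and then invoke transitivity, with the converse handled identically by first using the poset axiom on $\cQ$ to place $(i,k)$ in $\cQ$. The only cosmetic difference is that you phrase the forward direction as ``$\cP$ is an up-set'' while the paper phrases it contrapositively as ``$\cQ - \cP$ is a down-set,'' which is the same argument.
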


\begin{proof}
The preceding discussion gives the forward direction.  For the converse, suppose the condition in the lemma holds.  If $(i,j) \in \cQ$ and $(j,k) \in \cP$, then $(i,k) \in \cQ$ and $(j,k) \preceq_\cQ(i,k)$, so $(i,k) \in \cP$.  If $(j,k) \in \cQ$ and $(i,j) \in \cP$, then $(i,k) \in \cQ$ and $(i,j) \preceq_\cQ(i,k)$, so again $(i,k) \in \cP$.  Hence $\cP\vartriangleleft \cQ$ by definition.
\end{proof}

  This lemma leads to our classification.  Recall that a subset of a partially ordered set is an \emph{antichain} is no two elements of the set are comparable.
  
\begin{proposition}\label{normal-pattern-classification}
 Let $\cP$ be a poset on $[n]$.  Then the set of normal subposets of $\cP$ is in bijection with the set of subsets $S\subset \cP$ which are antichains with respect to the partial ordering $\prec_\cP$.  This bijection is given explicitly by $S \mapsto \cP_S$, where $\cP_S$ is the poset on $[n]$ defined by \[\cP_S = \{ (i,j) \in \cP: (i,j)\not \preceq_\cP(k,l)\text{ for all }(k,l) \in S\}.\] 
\end{proposition}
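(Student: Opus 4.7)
The plan is to construct an explicit inverse to the map $S \mapsto \cP_S$: send a normal subposet $\cP' \vartriangleleft \cP$ to the set $S(\cP')$ of $\preceq_\cP$-maximal elements of $\cP - \cP'$. Since the maximal elements of any finite poset form an antichain, this map has the correct target, and showing it is a two-sided inverse of $S \mapsto \cP_S$ will give the proposition.

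First I would verify that $\cP_S$ is itself a normal subposet of $\cP$ for an \emph{arbitrary} $S \subset \cP$ (not just for antichains). The normality condition from Lemma \ref{normal2partial-order} is immediate by transitivity of $\preceq_\cP$: any $(k',l') \in \cP - \cP_S$ satisfies $(k',l') \preceq_\cP (k,l)$ for some $(k,l) \in S$, so every $(i,j) \preceq_\cP (k',l')$ also lies $\preceq_\cP$-below $(k,l)$ and hence outside $\cP_S$. The subtler point is showing $\cP_S$ is closed under transitivity, and this is the main obstacle. My approach is first to establish, by an easy induction on chain length, the usable characterization that $(j,k) \preceq_\cP (i,l)$ holds iff $(j,k),(i,l) \in \cP$, $i \leq j$, $k \leq l$, and the \emph{filler} pairs $(i,j)$ and $(k,l)$ belong to $\cP$ whenever $i<j$ or $k<l$ respectively. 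Granting this characterization, if $(a,b),(b,c) \in \cP_S$ and $(a,c) \preceq_\cP (k,l)$ for some $(k,l) \in S$, then the fillers $(k,a)$ (when $k<a$) and $(c,l)$ (when $c<l$) lie in $\cP$; combining these with $(a,b),(b,c) \in \cP$ and the transitivity of $\cP$ gives $(k,b),(b,l) \in \cP$, which certifies both $(a,b) \preceq_\cP (k,l)$ and $(b,c) \preceq_\cP (k,l)$ via the same characterization, contradicting $(a,b),(b,c) \in \cP_S$. Hence $(a,c) \in \cP_S$, so $\cP_S$ is a subposet and Lemma \ref{normal2partial-order} yields $\cP_S \vartriangleleft \cP$.

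Next I would show $S \mapsto \cP_S$ and $\cP' \mapsto S(\cP')$ are mutual inverses. For $\cP_{S(\cP')} = \cP'$: if $(i,j) \in \cP'$ and $(k,l) \in S(\cP') \subset \cP - \cP'$, then $(i,j) \preceq_\cP (k,l)$ would contradict Lemma \ref{normal2partial-order} applied to $\cP' \vartriangleleft \cP$, so $(i,j) \in \cP_{S(\cP')}$; conversely, every $(i,j) \in \cP - \cP'$ lies $\preceq_\cP$-below some maximal element of $\cP - \cP'$, which is in $S(\cP')$ by definition, so $(i,j) \notin \cP_{S(\cP')}$. For $S(\cP_S) = S$ when $S$ is an antichain: each $(k,l) \in S$ lies in $\cP - \cP_S$ by reflexivity of $\preceq_\cP$, and is maximal there because any strictly larger $(k',l') \in \cP - \cP_S$ would satisfy $(k',l') \preceq_\cP (k'',l'')$ for some $(k'',l'') \in S$, producing $(k,l) \prec_\cP (k'',l'')$ and violating the antichain hypothesis; conversely, any $(k',l') \in S(\cP_S)$ lies $\preceq_\cP$-below some $(k,l) \in S \subset \cP - \cP_S$, and maximality forces $(k',l') = (k,l)$.

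The main obstacle is the transitivity of $\cP_S$; once the filler characterization of $\preceq_\cP$ is in hand, every other step is a short set-theoretic check using Lemma \ref{normal2partial-order} and the antichain property. Establishing that characterization is the only genuine computation, and it reduces to a straightforward induction on chain length because the covers defining $\prec_\cP$ change only one coordinate at a time.
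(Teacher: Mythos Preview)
Your proposal is correct and follows the same overall strategy as the paper: define the inverse map by sending a normal subposet $\cP' \vartriangleleft \cP$ to the set of $\preceq_\cP$-maximal elements of $\cP - \cP'$, and check the two maps are mutual inverses via Lemma~\ref{normal2partial-order}.

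The one place where you work harder than necessary is the verification that $\cP_S$ is transitively closed. You develop a ``filler characterization'' of $\preceq_\cP$ and then use it to derive a contradiction. The paper bypasses this entirely with a one-line observation: if $(i,j),(j,k) \in \cP$ then directly from the generating relations~(\ref{poset-partial-order}) one has $(i,j) \prec_\cP (i,k)$ and $(j,k) \prec_\cP (i,k)$. Hence if $(i,k) \preceq_\cP (k',l')$ for some $(k',l') \in S$, transitivity of $\prec_\cP$ forces $(i,j),(j,k) \preceq_\cP (k',l')$ as well, so $(i,j),(j,k) \notin \cP_S$. Contrapositively, $(i,j),(j,k) \in \cP_S$ implies $(i,k) \in \cP_S$. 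Your filler characterization is true and your argument from it is valid, but it is a detour; the paper's route uses only the defining covers of $\prec_\cP$ and its transitivity.
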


\begin{proof}
 Observe that if $(i,j),(j,k) \in \cP$ then $(i,j) \prec_\cP (i,k)$ and $(j,k) \prec_\cP (i,k)$.   Since $\prec_\cP$ is transitive, it follows that $\cP_S$ is a poset on $[n]$ for any subset $S\subset \cP$, %, since if $(i,j),(j,k) \in \cP_S$ then $(i,k) \in \cP_S$.%, as otherwise we would have $(i,k) \preceq_\cP(x,y)$ for some $(x,y)$, which would imply that $(i,j),(j,k) \preceq_\cP(x,y) \in S$.  
 and by Lemma \ref{normal2partial-order} we have $\cP_S \vartriangleleft \cP$. %, since if $(i,j) \in \cP$ and there exists  $(k,l) \in \cP-\cP_S$ with $(i,j)\preceq_\cP (k,l)$, then  $(i,j) \notin\cP_S$ since by construction $(k,l) \preceq_\cP (x,y)$ for some $(x,y) \in S$ in which case $(i,j) \preceq_\cP (x,y) \in S$.  
 Conversely, suppose $\cO$ is a poset with $\cO\vartriangleleft \cP$.  If we define $S \subset \cP$ to be the set of maximal elements in $\cP-\cO$ with respect to $\prec_\cP$, then $S$ is an antichain and $\cO = \cP_S$ by definition.  Furthermore, $S$ is clearly the only antichain in $\cP$ with $\cO = \cP_S$, so it follows that the map $S\mapsto \cP_S$ between antichains and normal subposets of $\cP$ is a bijection.
 \end{proof}

\def\cD{\mathcal{D}}

The rest of this section concerns the problem of counting and classifying the set of supernormal algebra subgroups of $U_n(q)$. % We begin this program with the following combinatorial lemma.
In order to do this, we recall some familiar definitions from combinatorics.  In particular, a \emph{Dyck path} of order $n$ is a lattice path in the plane consisting only of up steps $U = (1,1)$ and down steps $D = (1,-1)$,
 which starts at $(0,0)$, ends at $(n,0)$, and never passes below the $x$-axis.  Let $\cD_n$ denote the set of such paths.  We can uniquely represent each $\rho \in \cD_n$ by writing $\rho = U^{a_1} D^{b_1} \cdots U^{a_r} D^{b_r}$ where $a_i,b_i$ are positive integers such that $a_1 + \dots + a_r = b_1 + \dots + b_r = n$ and $(a_1 - b_1) + \dots + (a_t - b_t) \geq 0$ for all $t=1,\dots,r$.  This notation indicates that $\rho$ is the path given by taking $a_1$ up steps, then $b_1$ down steps, then $a_2$ up steps, then $b_2$ down steps, and so on.  
 The number of peaks of $\rho \in \cD_n$ is the number of occurrences of an up step $U$ following consecutively by a down step $D$.  For example, the path $\rho = U^{a_1} D^{b_1} \cdots U^{a_r} D^{b_r}$ has $r$ peaks.  It is well-known that the number of Dyck paths of order $n$ with $r$ peaks is the Narayana number $N(n,r)$, which is defined by 
\[ N(n,k+1) = \left\{\barr{ll} \displaystyle \frac{1}{n}  \binom{n}{k+1}\binom{n}{k}, & \text{if }0\leq k < n, \\ 0,&\text{if $k\geq n$.}\earr\right.\]  The following lemma gives an additional interpretation of Narayana numbers.

\begin{lemma}\label{count1} Fix a positive integer $n$ and nonnegative integer $k$.  Then the number of $k$-element subsets of $[[n]] = \{ (i,j) : 1\leq i < j \leq n\}$ which are antichains with respect to $\prec_{[[n]]}$ is $N(n,k+1)$.
\end{lemma}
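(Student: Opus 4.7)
The plan is to prove the lemma by constructing a bijection between $k$-element antichains in $([[n]],\preceq_{[[n]]})$ and Dyck paths of order $n$ with exactly $k+1$ peaks, then invoking the already-cited formula $N(n,k+1)$ for the number of such paths.

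First I would unpack the generating relations in (\ref{poset-partial-order}): since $(j,k)\prec_{[[n]]}(i,k)$ requires $i<j$ and $(j,k)\prec_{[[n]]}(j,l)$ requires $k<l$, an easy induction on the length of a chain of covering relations shows that the transitive closure is the dominance order
\[ (a,b)\preceq_{[[n]]}(c,d)\quad\text{if and only if}\quad c\le a\text{ and }b\le d.\]
From this it follows quickly that a $k$-element antichain $\{(a_1,b_1),\dots,(a_k,b_k)\}$ may be ordered so that its first coordinates are strictly increasing $a_1<\cdots<a_k$ (a repeat $a_i=a_j$ would force comparability by dominance in the second coordinate), and then pairwise incomparability forces $b_1<\cdots<b_k$ as well. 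Hence $k$-antichains in $[[n]]$ correspond bijectively to ordered pairs $(A,B)$ of $k$-element subsets $A=\{a_1<\cdots<a_k\}$, $B=\{b_1<\cdots<b_k\}$ of $[n]$ satisfying $a_i<b_i$ for every $i$.

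Second I would give an analogous encoding of Dyck paths. Every Dyck path of order $n$ with $k+1$ peaks writes uniquely as $\rho=U^{c_1}D^{d_1}\cdots U^{c_{k+1}}D^{d_{k+1}}$ with $c_i,d_i\ge 1$ and $\sum c_i=\sum d_i=n$, and the non-negativity of the valley heights $\sum_{j\le t}(c_j-d_j)\ge 0$ translates, via $a'_i=c_1+\cdots+c_i$ and $b'_i=d_1+\cdots+d_i$ for $i=1,\dots,k$, into pairs $(A',B')$ of $k$-element subsets $A'=\{a'_1<\cdots<a'_k\}$, $B'=\{b'_1<\cdots<b'_k\}$ of $[n-1]$ satisfying $a'_i\ge b'_i$. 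The shift $a_i:=b'_i,\ b_i:=a'_i+1$ then defines an explicit bijection from these Dyck-path pairs to the antichain pairs of the previous paragraph: strict monotonicity is preserved in both directions, $a_i\in[n-1]$ and $b_i\in[2,n]$, and $a_i<b_i$ holds precisely when $a'_i\ge b'_i$. The inverse $a'_i=b_i-1,\ b'_i=a_i$ is well-defined since $a_i<b_i$ forces $b_i\ge 2$ and $a_i\le n-1$. Combining this with the paper's formula counting Dyck paths of order $n$ with $k+1$ peaks by $N(n,k+1)$ yields the lemma.

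The main obstacle I anticipate is the first step, verifying cleanly that the transitive closure of the cover relations in (\ref{poset-partial-order}) collapses to the simple dominance order described above; once this is in hand, the remaining manipulations are essentially index bookkeeping. Boundary cases are easy to check by hand: for $k=0$ the unique empty antichain matches the single Dyck path $U^nD^n$, giving $N(n,1)=1$; for $k\ge n$ the interlacing condition $a_i<b_i$ cannot be satisfied, and $N(n,k+1)=0$ accordingly.
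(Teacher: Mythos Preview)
Your proposal is correct and takes essentially the same approach as the paper: both construct the same bijection between $k$-element antichains and Dyck paths of order $n$ with $k+1$ peaks, and indeed your map $(a_t,b_t)=(b'_t,a'_t+1)$ is exactly the paper's assignment $i_t=b_1+\cdots+b_t$, $j_t=1+a_1+\cdots+a_t$. The only difference is presentational: you factor the bijection through the intermediate encodings of each side as a pair of $k$-subsets of $[n]$ (respectively $[n-1]$), whereas the paper writes down the Dyck path $\rho_S$ and its inverse $S_\rho$ directly.
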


\begin{proof}  
Fix a $k$-element antichain $S\subset [[n]]$, and write $S = \{(i_1,j_1),\dots, (i_k,j_k)\}$ where $i_1 < \dots < i_k$; we can enumerate $S$ in this way since no two positions in $S$ lie the same row.  Since $S$ is an antichain, it follows in addition that $j_1 < \dots < j_k$.  Now define $\rho_S \in \cD_n$ by
\[ \rho_S = \left\{\barr{ll} 
U^{j_1-1} D^{i_1} \(U^{j_2-j_1} D^{i_2-i_1} \cdots U^{j_k-j_{k-1}} D^{i_k-i_{k-1}} \)U^{n-(j_k-1)} D^{n-i_k}, & \text{if }k>0,
 \\ 
 U^nD^n,& \text{if }k=0.\earr
 \right.\] 
 By construction all of the exponents here are positive integers, and since $i_t < j_t$ for all $t$, the path stays above the $x$-axis and lies in $\cD_n$.  Furthermore, $\rho_S$ has $k+1$ peaks.  To prove the lemma it suffices to show that $S \mapsto \rho_S$ is bijective as a map from the set of $k$-elements antichains in $[[n]]$ to the set of Dyck paths of order $n$ with $k+1$ peaks.  Indeed, this follows since we can construct an inverse map: given $\rho =  U^{a_1} D^{b_1} \cdots U^{a_{k+1}} D^{b_{k+1}} \in \cD_n$, let 
 \[ S_\rho = \left\{\barr{ll} \{ (i_1,j_1),\dots, (i_k,j_k)\}\ \ \text{where}\ \ \left\{ \barr{l} i_t = b_1 + \dots + b_t, \\ j_t = 1 + a_1 +\dots + a_t, \earr\right. &\text{if }k>0, \\ \varnothing, & \text{if }k=0.\earr\right.\]
Clearly $i_1<\dots <i_k$ and $j_1<\dots < j_k$, and the condition $(a_1 - b_1) + \dots + (a_t - b_t) \geq 0$ for all $t$ implies that $i_t < j_t$ for all $t$.  Thus $S_\rho \subset [[n]]$ is an antichain, and one can check that $S\mapsto \rho_S$ and $\rho \mapsto S_\rho$ are inverse maps, and therefore both bijections, as desired.
\end{proof}

The preceding proof  constructs a bijection between antichains in $[[n]]$ and Dyck paths of order $n$.  We can visualize this bijection by viewing the positions of an $n\times n$ matrix as the interior squares of an $n\times n$ grid. We then rotate our Dyck paths 45 degrees clockwise to view $\cD_n$ as the set of monotonic paths consisting of moves to the right and down along edges in the grid, which start at the upper left hand corner, end at the lower right corner, never pass below the diagonal.  The Dyck path corresponding to an antichain $S\subset [[n]]$ is the unique monotonic path whose valleys (i.e., occurrences of a move down followed consecutively by a move right) border positions of $S$.  For example,
\[ S = \{ (1,2), (2,4) \} \subset [[4]]  \qquad\text{corresponds to}\qquad 
UDU^2DUD^2 = \xy<0.45cm,1.35cm> \xymatrix@R=.3cm@C=.3cm{
\bullet \ar @{-} [r] \ar @{.} [rrrr]& \bullet   & \cdot  &\cdot  &\cdot  \\
\cdot   \ar @{.} [rrrr]& \bullet \ar @{-} [u] \ar @{-} [r]  & \bullet \ar @{-} [r]  & \bullet  &\cdot \\
\cdot \ar @{.} [rrrr]  & \cdot & \cdot & \bullet \ar @{-} [u] \ar @{-} [r] & \bullet   \\
\cdot \ar @{.} [rrrr]  & \cdot & \cdot & \cdot &    \bullet \ar @{-} [u] \\
\cdot \ar @{.} [rrrr] \ar @{.} [uuuu]  & \cdot \ar @{.} [uuuu]& \cdot \ar @{.} [uuuu]& \cdot \ar @{.} [uuuu]& \bullet\ar @{.} [uuuu] \ar @{-} [u]  
}\endxy\] This shows directly that the number of normal subposets $\cP\vartriangleleft [[n]]$ is $C_n = \frac{1}{n+1}\binom{2n}{n}$, since \[|\cD_n| = \sum_{k=1}^n N(n,k) = C_n.\]

\def\RR{\mathbb{R}}

\def\cU{\mathcal{U}}
\def\cV{\mathcal{V}}
\def\cW{\mathcal{W}}

For the second step in our classification, we present another set of definitions.  Given a nonnegative integer $k$, let $\cV_k(\FF_q)$ denote the set of all subspaces of the $k$-dimensional vector space $\FF_q^k$ over $\FF_q$.   The number of subspaces of $\FF_q^k$ of dimension $i$ is well known to be given by the $q$-binomial coefficient
\[\label{q-binom} \binom{k}{i}_q = \left\{\barr{ll} 1, & \text{if }i = 0,\\ \\ \displaystyle \frac{(1-q^k)(1-q^{k-1})\cdots (1-q^{k-i+1})}{(1-q)(1-q^2)\cdots(1-q^i)},& \text{if }1\leq i \leq k,\\ \\ 0,& \text{otherwise}.\earr\right.\] Thus, $\cV_k(\FF_q)$ has cardinality 
\[\label{V-card} \left|\cV_k(\FF_q)\right| = \sum_{i=0}^k \binom{k}{i}_q.\] The vector space
$\FF_q^k$ has a canonical basis given by the vectors $e_i = (0,\dots,0,1,0,\dots,0)$ for $1\leq i \leq k$ which have a single 1 in the $i$th coordinate and 0 in all other coordinates.  Define $\wt \cV_k(\FF_q)$ for $k>0$ as the set of subspaces of $\FF_q^k$ whose intersection with this canonical basis is empty, and set $\wt \cV_0(\FF_q) = \cV_0(\FF_q) = \{ \{ 0\}\}$.

\begin{lemma}\label{count2}
The cardinality of $\wt \cV_k(\FF_q)$ is given by the binomial transform of $\left|\cV_k(\FF_q)\right|$; i.e.,
\[\label{tilde-card} \left| \wt \cV_k(\FF_q)\right| = \sum_{j=0}^k (-1)^{k-j} \binom{k}{j} \left|\cV_j(\FF_q)\right| =\sum_{0\leq i \leq j \leq k}  (-1)^{k-j}\binom{k}{j} \binom{j}{i}_q.\]  %Thus the sequence $\left\{ \left|\wt \cV_k(\FF_q)|\right \}_{k\geq 0}$ is the inverse binomial transform of the sequence $\left \{ \left|\cV_k(\FF_q)|\right\}_{k\geq 0}$.
\end{lemma}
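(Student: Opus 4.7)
The plan is to prove the first equality
\[ \left|\wt\cV_k(\FF_q)\right| = \sum_{j=0}^k (-1)^{k-j}\binom{k}{j}\left|\cV_j(\FF_q)\right| \]
by inclusion-exclusion, after which the second equality follows immediately by substituting the formula $|\cV_j(\FF_q)| = \sum_{i=0}^j \binom{j}{i}_q$ displayed just above the lemma.

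For the inclusion-exclusion, for each $i \in [k]$ let $A_i \subset \cV_k(\FF_q)$ be the collection of subspaces $V \subset \FF_q^k$ with $e_i \in V$. By definition $\wt\cV_k(\FF_q) = \cV_k(\FF_q) \setminus (A_1 \cup \cdots \cup A_k)$, so
\[ \left|\wt\cV_k(\FF_q)\right| = \sum_{S \subseteq [k]} (-1)^{|S|} \left|\bigcap_{i \in S} A_i\right|. \]
The key observation is that for any $S \subseteq [k]$ of size $s$, the subspaces of $\FF_q^k$ containing every $e_i$ with $i \in S$ correspond bijectively, via the quotient map $\FF_q^k \twoheadrightarrow \FF_q^k / \mathrm{span}\{e_i : i \in S\}$, to subspaces of a vector space of dimension $k-s$ over $\FF_q$. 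Hence $\left|\bigcap_{i \in S} A_i\right| = |\cV_{k-s}(\FF_q)|$, and this depends only on $s$.

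Collecting the $\binom{k}{s}$ subsets of a given size and re-indexing by $j = k-s$ gives
\[ \left|\wt\cV_k(\FF_q)\right| = \sum_{s=0}^k (-1)^s \binom{k}{s} \left|\cV_{k-s}(\FF_q)\right| = \sum_{j=0}^k (-1)^{k-j} \binom{k}{j} \left|\cV_j(\FF_q)\right|, \]
which is the first stated equality. The second follows by inserting $|\cV_j(\FF_q)| = \sum_{i=0}^j \binom{j}{i}_q$ and interchanging the order of summation.

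There is no real obstacle here. The only point requiring any care is justifying the quotient bijection, but this is standard linear algebra: since $\{e_i : i \in S\}$ is a linearly independent set, it spans an $s$-dimensional subspace, and the subspaces of $\FF_q^k$ containing it are in order-preserving bijection with subspaces of the $(k-s)$-dimensional quotient. Once that point is recorded, the computation is purely formal.
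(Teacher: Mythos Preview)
Your proof is correct and follows essentially the same inclusion--exclusion argument as the paper: the paper's sets $\cT_S$ are exactly your intersections $\bigcap_{i\in S} A_i$, and both arguments reduce to the quotient bijection $V \mapsto V/\mathrm{span}\{e_i : i\in S\}$ to count these as $|\cV_{k-|S|}(\FF_q)|$. The only cosmetic difference is that the paper separates out the $k=0$ case and computes $|\cT|$ before subtracting, whereas you apply the complementary form of inclusion--exclusion directly.
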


\begin{remark}
When $q=2$, the cardinalities $\left|\cV_k(\FF_q)\right|$ and $\left|\wt \cV_k(\FF_q)\right|$ for $k\geq 0$ appear as sequences \htmladdnormallink{A006116} {http://oeis.org/A006116}  and \htmladdnormallink{A135922} {http://oeis.org/A135922}  in \cite{OEIS}, respectively.
\end{remark}

\begin{proof} 
This holds if $k=0$; assume $k>0$.  Then $\left|\wt \cV_k(\FF_q)\right|  = \left|\cV_k(\FF_q)\right| - |\cT|$, where $\cT\subseteq \cV_k(\FF_q)$ denotes the set of subspaces of $\FF_q^k$ containing at least one basis vector.  Given a nonempty subset $S \subseteq \{1,\dots,k\}$, let $\cT_{S} \subseteq \cT$ denote the set of subspaces which contain $e_i$ for each $i \in S$.  For any nonempty $S$ we have $|\cT_{S}| = \left| \cV_{k-|S|}(\FF_q)\right|$, since if $\cW = \FF_q\spanning\{ e_{i} : i \in S\}$ then we can naturally identify  $\FF_q^k / \cW$ with $\FF_q^{k-|S|}$, and under this identification the map $\cU\mapsto \cU/\cW$ gives a bijection $\cT_S \rightarrow \cV_{k-|S|}(\FF_q)$.  Furthermore, if $R,S\subseteq \{1,\dots,k\}$ are two nonempty subsets, then $\cT_R \cap \cT_S = \cT_{R\cup S}$.  Since $\cT = \bigcup_{i=1}^k \cT_{\{i\}}$, the desired result follows from the inclusion-exclusion principle.
%
%Let $A_i \subset \cV_k(\FF_q)$ denote the set of subspaces which contain the basis vector $e_i$.  For any distinct indices $i_1,\dots, i_r$, the intersection $A_{i_1}\cap \dots \cap A_{i_r}$ is the set of subspaces containing all the basis vectors $e_{i_1},\dots, e_{i_r}$.  Let $\cW = \FF_q\spanning\{ e_{i_1},\dots, e_{i_r}\}$.  We can naturally identify $\FF_q^k / \cW$ with $\FF_q^{k-r}$, and under this identification the map $\cU\mapsto \cU/\cW$ gives a bijection $A_{i_1}\cap \dots \cap A_{i_r} \rightarrow \cV_{k-r}(\FF_q)$; thus $A_{i_1}\cap \dots \cap A_{i_r}$ has cardinality $\left|\cV_{k-r}(\FF_q)\right|$.  In particular, $\left|A_{i_1}\cap \dots \cap A_{i_r}\right| = \left|A_1 \cap \dots \cap A_r\right |$.  It now follows from the inclusion-exclusion principle that the set of subspaces of $\FF_q^k$ containing at least one basis vector has cardinality
%\[ \left|\cV_k(\FF_q)\right| - \left|\wt \cV_k(\FF_q)\right| =  \left| \bigcup_{i=1}^k A_i \right| = \sum_{i=1}^k (-1)^{i+1} \binom{k}{i} \left|A_1 \cap \dots \cap A_i\right| = \sum_{i=1}^k (-1)^{i+1}\binom{k}{i} \left |\cV_{k-i}(\FF_q)\right|.\] Rearranging terms gives the statement of the lemma.
\end{proof}

Consider a $k$-element antichain $S \subset [[n]]$, and enumerate its elements as $(i_1,j_1),\dots,(i_k,j_k)$.  The choice of enumeration is not important, but to be canonical we can demand that $i_1 < \dots < i_k$ and $j_1< \dots < j_k$.   This is possible since otherwise two elements of $S$ would be comparable.  We define a linear map $\varphi_{S} : \fkn_n(q) \rightarrow \FF_q^k$ by 
\[\label{varphi_S} \varphi_S(X)  = \left\{\barr{ll} (X_{i_1j_1}, \dots, X_{i_kj_k}) \in \FF_q^k, & \text{if }k>0 \\ 0,&\text{if }k=0\earr\right.\qquad\text{for }X \in \fkn_n(q).\] 
% We have a second linear map $\varphi_S^* : (\FF_q^k)^* \rightarrow \fkn_n^*(q)$ defined by \be\label{varphi_S^*} \varphi^*_S(\ell)(X) = \ell\circ \varphi_S(X),\qquad\text{for $\ell \in (\FF_q^k)^*$ and $X \in \fkn_n(q)$}.\ee  This map has the property that $X \in \ker \varphi_S^*(\ell)$ for some $\ell \in (\FF_q^k)^*$ if and only if $\varphi_S(X) \in \ker \ell$.  Furthermore, the image of $\varphi^*_S$ lies in $\sP_n^*(q)$; in particular, the support of $\varphi_S^*(\ell)$ is a subset of $S$.  
%We extend this definition to the case $k=0$ by setting $\varphi_S = 0$ when $S$ is empty.  %and $\varphi_S^* = 0$ when $S$ is empty.   
Now, given a subspace $\cU\in \wt \cV_k(\FF_q)$, we define the subset $G_n(S,\cU) \subset U_n(q)$ by \be\label{group-def} G_n(S,\cU) = \left\{ 1 +X \in U_n(q) : \varphi_S(X) \in \cU\text{ and }X_{ij} = 0\text{ if }\exists (k,l) \in S\text{ with }(i,j)\prec_{[[n]]} (k,l)
\right\}.
\ee
The set $G_n(S,\cU)$ is in fact a supernormal subgroup of $U_n(q)$, and every supernormal subgroup of $U_n(q)$ is of this form when $q$ is prime.  This is the main result of this section, and we state it as the following theorem.

\begin{theorem}\label{supernormal-classification} If $q$ is prime, then the map
\be \label{supernormal2paths}
\barr{ccc} 
\left\{ (S, \cU) : \barr{l} \text{$S\subset [[n]]$ and $\cU \in \wt \cV_k(\FF_q)$, where $S$ is a} \\ \text{$k$-element antichain with respect to $\prec_{[[n]]}$} \earr \right\}
&
\to
&
\left\{\barr{c} \text{Supernormal} \\ \text{subgroups of $U_n(q)$} \earr\right\} \\
 (S,\cU) & \mapsto & G_n(S,\cU)
\earr
\ee is a bijection.  Consequently, the number of supernormal subgroups of $U_n(q)$ or two-sided ideals of $\fkn_n(q)$ is 
\[\label{number-supernormal-formula}\sum_{k=0}^{n-1} N(n,k+1) \left|\wt \cV_k(\FF_q)\right|
= \sum_{0\leq i \leq j \leq k < n}  \frac{(-1)^{k-j}}{n} \binom{n}{k+1} \binom{n}{k}\binom{k}{j} \binom{j}{i}_q.\] \end{theorem}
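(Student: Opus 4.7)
The counting formula is immediate from bijectivity of (\ref{supernormal2paths}) together with Lemmas \ref{count1} and \ref{count2}, so I focus on establishing the bijection. My first step will be to invoke the proposition immediately preceding this theorem: when $q$ is prime, supernormal subgroups of $U_n(q)$ correspond bijectively to two-sided ideals of $\fkn_n(q)$ via $\fka\mapsto 1+\fka$. It then suffices to show that the map sending $(S,\cU)$ to the ideal $\fka_{S,\cU}$ with $G_n(S,\cU)=1+\fka_{S,\cU}$ is a bijection onto the set of two-sided ideals. Unpacking (\ref{group-def}) yields the direct sum decomposition
\[ \fka_{S,\cU} = \fkn_{\cP_S} \,\oplus\, \{Y \in \FF_q\spanning\{e_{i_sj_s}\} : \varphi_S(Y)\in \cU\}, \]
which uses $\cP_S\cap S = \emptyset$ and will serve as the basic object of study.

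\textbf{Well-definedness and injectivity.} To see $\fka_{S,\cU}$ is a two-sided ideal, the first summand is an ideal by the Corollary to Proposition \ref{normal=supernormal} applied to $\cP_S\vartriangleleft [[n]]$ (guaranteed by Proposition \ref{normal-pattern-classification}); for the second summand, the identity $e_{ab}e_{cd}=\delta_{bc}e_{ad}$ reduces matters to showing that whenever $(c,d)\in S$, each position $(p,d)$ with $p<c$ and $(c,q)$ with $q>d$ lies in $\cP_S$, which follows from the antichain property of $S$. Injectivity is then easy: $\fka$ determines $\cP_S = \{(i,j):e_{ij}\in\fka\}$ (the condition $\cU\in\wt\cV_k(\FF_q)$ guarantees no $e_{i_sj_s}$ lies in $\fka$), Proposition \ref{normal-pattern-classification} recovers $S$ as the maximal elements of $[[n]]\setminus \cP_S$ under $\prec_{[[n]]}$, and then $\cU = \varphi_S(\fka)$.

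\textbf{Surjectivity.} Given any two-sided ideal $\fka\subset\fkn_n(q)$, set $\cP = \{(i,j)\in[[n]]:e_{ij}\in\fka\}$. The identity $e_{ij}e_{jk}=e_{ik}$ confirms $\cP\vartriangleleft[[n]]$ directly, so Proposition \ref{normal-pattern-classification} produces a unique antichain $S$ with $\cP=\cP_S$. The core claim will be $\fka\subset\fkn_{\cP_S\cup S}$: no element of $\fka$ has a nonzero entry at a position strictly below $S$. I will argue by contradiction. Supposing $X\in\fka$ has $X_{ij}\neq 0$ with $(i,j)\prec_{[[n]]} (k,l)\in S$, the goal is to derive $e_{kl}\in\fka$, contradicting $(k,l)\notin\cP$. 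The central computation is the sandwich identity
\[ e_{ki}\,X\,e_{jl} = X_{ij}\,e_{kl}, \]
which holds whenever both $k<i$ and $j<l$ are strict; since $X_{ij}\in\FF_q^\times$, this immediately gives $e_{kl}\in\fka$.

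\textbf{Main obstacle.} The subtle part is the two edge cases $k=i$ (with $j<l$) and $j=l$ (with $k<i$), in which only a one-sided multiplication is available. My plan for the first case is to form $Y = Xe_{jl}\in\fka$, which is supported in column $l$ with $Y_{pl} = X_{pj}$ for $p<j$. If some row $p$ with $i<p<j$ satisfies $X_{pj}\neq 0$, then $e_{ip}Y = X_{pj}\,e_{kl}$ isolates $e_{kl}$. Otherwise every contributing $p$ satisfies $p\leq i$, and the antichain property of $S$ forces $(p,l)\in\cP_S$ for each $p<i=k$ (else a second element of $S$ would lie above $(k,l)$, violating the antichain condition); hence each $X_{pj}\,e_{pl}$ with $p<i$ lies in $\fkn_{\cP_S}\subset\fka$, and subtracting these from $Y$ leaves $X_{ij}\,e_{kl}\in\fka$. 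The other edge case is handled symmetrically by working with $e_{ki}X$ supported in a single row. Once $\fka\subset\fkn_{\cP_S\cup S}$ is established, $\cU := \varphi_S(\fka)$ lies in $\wt\cV_k(\FF_q)$ (a standard basis vector $e_s\in\cU$ would yield $e_{i_sj_s}\in\fka$, contradicting $\cP_S\cap S = \emptyset$), and $\fka = \fka_{S,\cU}$ as required. The counting formula then follows by summing $N(n,k+1)\cdot|\wt\cV_k(\FF_q)|$ over $0\leq k<n$ and invoking Lemmas \ref{count1} and \ref{count2}.
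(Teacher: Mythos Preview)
Your proof is correct and takes a genuinely different route from the paper's. The paper argues character-theoretically throughout: for well-definedness it exhibits $G_n(S,\cU)$ as an intersection $\ker\chi^{\lambda_1}\cap\cdots\cap\ker\chi^{\lambda_r}$ of supercharacter kernels by choosing functionals $\ell_i$ cutting out $\cU$ and setting $\lambda_i=\ell_i\circ\varphi_S$, then invoking Lemma~\ref{classifying-lemma}; for surjectivity it starts from Proposition~\ref{sc-prop} to write an arbitrary supernormal subgroup as $\ker\chi^{\lambda_1}\cap\cdots\cap\ker\chi^{\lambda_r}$, takes $S$ to be the $\prec_{[[n]]}$-maximal elements of $\bigcup_i\supp(\lambda_i)$, reduces to the case $\supp(\lambda_i)\subset S$, and reads off $\cU$ as $\varphi_S\bigl(\bigcap_i\ker\lambda_i\bigr)$. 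Your approach bypasses supercharacters entirely: you pass via Proposition~\ref{supernormal-properties} to two-sided ideals and argue by direct matrix computations, the key step being the sandwich identity $e_{ki}Xe_{jl}=X_{ij}e_{kl}$ together with a careful edge-case analysis exploiting the antichain property of $S$ and the containment $\fkn_{\cP_S}\subset\fka$. Your argument is more elementary and self-contained in that it avoids the supercharacter formula~(\ref{added-formula}) and Lemma~\ref{classifying-lemma}, and it makes the ideal structure quite transparent; the paper's approach, on the other hand, keeps the connection to the supercharacter table explicit and fits more uniformly with the surrounding narrative.
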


\begin{remark}
We showed above that the number of supernormal \emph{pattern} subgroups of $U_n(q)$ is the $n$th Catalan number  
\[ C_n =\sum_{k=0}^{n-1} N(n,k+1) = \frac{1}{n+1}\binom{2n}{n},\] which has no dependence on $q$.  By contrast, the theorem asserts that the number of arbitrary supernormal subgroups of $U_n(q)$ is strongly dependent on the size of the ambient field $\FF_q$.  When $q$ is prime $U_n(q)$ has a supernormal subgroup which is not a pattern group for all $n>2$.  If $q$ is not prime, then even $U_2(q) \cong \FF_q^+$ has a supernormal subgroup which is not a pattern group, given by the kernel of any nontrivial supercharacter.  
\end{remark}

To prove this, we first note the following characterization of the kernels of supercharacters of $U_n(q)$.  Recall that we define $(j,k) \prec_{[[n]]} (i,l)$ iff $i \leq j < k \leq l$  and one of the inequalities is strict.  The next lemma is now immediate from (\ref{added-formula}).

\begin{lemma}\label{classifying-lemma} Fix a positive integer $n$ and let $\lambda \in \sP_n^*(q)$.  Then the kernel of the supercharacter $\chi^\lambda$ of $U_n(q)$ is the subgroup
\[ \ker \chi^\lambda = \left\{ 1+X \in  U_n(q) :  \theta\circ \lambda(X) = 1\text{ and }X_{ij} = 0\text{ if }\exists (k,l) \in \supp(\lambda)\text{ with }(i,j)\prec_{[[n]]} (k,l)
\right\}.\]
\end{lemma}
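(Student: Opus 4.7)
The plan is to verify the equality by checking agreement of both sides on the canonical superclass representatives in $\sP_n(q)$. Since $\ker\chi^\lambda$ is automatically a union of superclasses, it suffices to show (i) that formula (\ref{added-formula}) forces $\chi^\lambda(1+\mu) = \chi^\lambda(1)$ for $\mu \in \sP_n(q)$ exactly when $\mu$ satisfies the two conditions defining the right-hand side set $K$, and (ii) that $K$ is itself a union of superclasses. Both together give $K = \ker\chi^\lambda$.

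For step (i), I would unpack the three requirements that force $\chi^\lambda(1+\mu) = \chi^\lambda(1) = \prod_{(i,l) \in \supp(\lambda)} q^{l-i-1}$ in (\ref{added-formula}): (A) for every $(i,k) \in \supp(\lambda)$ and every $j$ with $i<j<k$, neither $(i,j)$ nor $(j,k)$ lies in $\supp(\mu)$; (B) $f_\mu(i,l) = 0$ for every $(i,l) \in \supp(\lambda)$; and (C) $\theta\circ\lambda(\mu) = 1$. For each $(k,l) \in \supp(\lambda)$, clause (A) forbids nonzero entries of $\mu$ in the top row $\{(k,b) : k<b<l\}$ and right column $\{(a,l) : k<a<l\}$ of the ``box below'' $(k,l)$, while (B) forbids them in the interior $\{(a,b) : k<a<b<l\}$. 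The union of these three families is precisely $\{(a,b)\in[[n]] : (a,b) \prec_{[[n]]}(k,l)\}$, so (A) and (B) together say that $\mu$ satisfies the first condition defining $K$, while (C) is the second. Hence $1+\mu \in \ker\chi^\lambda$ iff $\mu \in K$.

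For step (ii), let $\cP' = \{(i,j)\in[[n]] : (i,j) \not\prec_{[[n]]} (k,l) \text{ for every } (k,l) \in \supp(\lambda)\}$; a short transitivity check shows $\cP'$ is a poset with $\cP' \vartriangleleft [[n]]$ via the criterion of Lemma \ref{normal2partial-order}. The first condition defining $K$ reads $X \in \fkn_{\cP'}$, and by Propositions \ref{normal=supernormal} and \ref{supernormal-properties} this condition is stable under the two-sided $U_n(q)$-action. For the second, expanding $(1+A)X(1+B) = X + AX + XB + AXB$ for $X \in \fkn_{\cP'}$ and $A,B \in \fkn_n(q)$, one observes that each correction term has entry $0$ at every $(k,l) \in \supp(\lambda)$: the relevant sums only pick up entries $X_{a,b}$ with $(a,b) \prec_{[[n]]}(k,l)$, which vanish by hypothesis. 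Hence $\lambda((1+A)X(1+B)) = \lambda(X)$ and $\theta\circ\lambda$ is constant on the superclass containing $1+X$. The main obstacle is the bookkeeping in step (i)---identifying the conjunction of (A) and (B) with the single poset condition below $\supp(\lambda)$---after which step (ii) reduces to routine matrix calculations using the ideal property.
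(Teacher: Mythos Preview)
Your argument is correct and is exactly the natural unpacking of the paper's one-line justification. The paper simply declares the lemma ``immediate from (\ref{added-formula})'' and gives no further proof; your two-step argument (matching on superclass representatives via (A)--(C), then checking the right-hand side is superclass-stable using the ideal $\fkn_{\cP'}$) is precisely what one must do to make that claim rigorous, since formula (\ref{added-formula}) is only stated for $\mu\in\sP_n(q)$ while the lemma describes the kernel for arbitrary $1+X$.
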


We can now prove the theorem.

\begin{proof}[Proof of Theorem \ref{supernormal-classification}]
Our goal is to prove that the map (\ref{supernormal2paths}) is a bijection.  To begin, let us show that this map is well defined; specifically, we must demonstrate that (\ref{group-def}) defines a supernormal subgroup.  Let $S \subset [[n]]$ be a $k$-element antichain and let $\cU \in \wt \cV_k(\FF_q)$ be a subspace of dimension $k-r$.  If $S =\varnothing$ then necessarily $\cU = \{0\}$ and $G_n(S,\cU) = U_n(q)$ is a supernormal subgroup.  Assume $k>0$ and choose functionals $\ell_1,\dots, \ell_r \in (\FF_q^k)^*$ such that $\cU = \ker \ell_1 \cap \dots \cap \ker \ell_r$.  %We can construct $\ell_i$ explicitly by fixing a subspace $\cW \subset \FF_q^k$ such that $\FF_q^k = \cU \oplus \cW$, then choosing a basis $\{ w_1,\dots, w_r\}$ of $\cW$ and defining $\ell_i$ by $\ell_i(w_j) = \delta_{ij}$ and $\ell_i(X) = 0$ for $X \in \cU$.  
Now define $\lambda_i \in \fkn_n^*(q)$ by 
\[ \lambda_i(X) =  \ell_i\circ \varphi_S(X),\qquad\text{for }X \in \fkn_n(q).\]  Since $S$ is an antichain, each $\lambda_i \in \sP_n^*(q)$, and since $\cU$ contains none of the basis vectors $e_1,\dots, e_k \in \FF_q^k$, we have $\supp(\lambda_1)\cup \dots\cup \supp(\lambda_r) = S$.  Since the nontrivial homomorphism $\theta : \FF_q^+ \rightarrow \CC^\times$ is injective as $q$ is prime, we also have $\theta\circ \lambda_i(X) = 0$ for all $i$ if and only if $X \in \ker\lambda_i$ for all $i$, in which case $\varphi_S(X) \in \ker \ell_1 \cap \dots \cap \ker \ell_r = \cU$.  It now follows from Lemma \ref{classifying-lemma} that the set $G_n(S,\cU)$ defined by (\ref{group-def}) is in fact the supernormal subgroup 
$G_n(S,\cU) = \ker \chi^{\lambda_1}\cap \dots \cap \ker \chi^{\lambda_r}.$  

This shows that (\ref{supernormal2paths}) is well defined.  To see that the map is injective, observe that \[\cU = \varphi_S\( G_n(S,\cU) - 1\) = \varphi_S\( \{ X \in \fkn_n(q): 1+X \in G_n(S,\cU) \}\);\] hence if $S$  is a fixed $k$-element antichain and $\cU,\cW \in \wt\cV_k(\FF_q)$, then $G_n(S,\cU) = G_n(S,\cW)$ if and only if $\cU = \cW$.  Now suppose $S,T$ are antichains in $[[n]]$ with $S\neq T$; let $\cU \in \wt \cV_{k_1}(\FF_q)$ and $\cW \in \wt \cV_{k_2}(\FF_q)$ where $k_1 = |S|$ and $k_2 = |T|$.  One of $S-T$, $T-S$ must be nonempty, so assume without loss of generality that $(i,j) \in S - T$.  Then one of the following three cases must occur: 
\begin{enumerate}
\item[(i)] $\{(i,j)\} \cup T$ is an antichain.  Then $1+e_{ij} \in G_n(T,\cW) - G_n(S,\cU)$.  
\item[(ii)] $(i,j) \prec_{[[n]]} (i',j')$ where $(i',j') \in T$. Then $1+e_{i'j'} \in G_n(S,\cU) - G_n(T,\cW)$.   
\item[(iii)] $(i',j') \prec_{[[n]]} (i,j)$ where $(i',j') \in T$. Then $1+e_{ij} \in G_n(T,\cW) - G_n(S,\cU)$. 
\end{enumerate}
In every case $G_n(S,\cU)\neq G_n(T,\cW)$.  We therefore conclude that (\ref{supernormal2paths}) is injective.

Finally, to show our map's surjectivity, consider an arbitrary supernormal subgroup $G\subset U_n(q)$ of the form 
$G = \ker \chi^{\lambda_1}\cap \dots \cap \ker \chi^{\lambda_r}$, where each $\lambda_i \in \sP_n^*(q)$.  Let $S$ be the set of positions in $\supp(\lambda_1)\cup \dots \cup \supp(\lambda_r)$ which are maximal with respect to $\prec_{[[n]]}$, and set $k = |S|$.  Then $S$ is an antichain, and if $1+X \in G$ then $X_{ij} = 0$ whenever $(i,j)\prec_{[[n]]} (i',j')$ for some $(i',j') \in S$ by Lemma \ref{classifying-lemma}.  Therefore setting $(\lambda_1)_{ij}  = \dots = (\lambda_r)_{ij} = 0$ for all $(i,j) \notin S$ has no effect on $G$, so we may assume without loss of generality that $\varnothing \subsetneq \supp(\lambda_i) \subset S$ for all $i$.  Let $\cU' = \ker(\lambda_1)\cap \dots \cap \ker (\lambda_r)$ and $\cU = \varphi_S(\cU') \in \cV_k(\FF_q)$.  Since every position in $S$ is in some $\supp(\lambda_i)$, no $X \in \cU'$ has $|\supp(X) \cap S| =1$; in other words, $\cU \in \wt \cV_k(\FF_q)$.  Since $\theta$ is injective as $q$ is prime, it follows from Lemma \ref{classifying-lemma} that $G = G_n(S,\cU)$.   This establishes the bijection (\ref{supernormal2paths}), and applying Lemmas \ref{count1} and \ref{count2} gives the formula for the number of supernormal subgroups of $U_n(q)$.
\end{proof}

\section{Restriction to a Supernormal Subgroup}\label{restrict-sect}

In this section we investigate the restriction of a supercharacter of an algebra group to a supernormal algebra subgroup.  Theorem \ref{resind-thm} computes how such restrictions decompose in general, and Proposition \ref{codim1-prop} provides a more explicit description for subgroups corresponding to subalgebras $\fkm \subset \fkn$ of codimension one.  

Suppose $\fkm \subset \fkn$ are  nilpotent associative $\FF_q$-algebras with $U_\fkm$ supernormal in $U_\fkn$.  Then by Proposition \ref{supernormal-properties} we have commuting left and right actions of $U_\fkn$ on $\fkm$ by multiplication, and these actions give rise to commuting left and right actions of $U_\fkn$ on $\fkm^*$.  Since $gU_\fkm = gU_\fkm g^{-1} g = U_\fkm g$ for all $g \in U_\fkn$, we can view $U_\fkn$ as acting (on the left and right) on the left, right, and two-sided $U_\fkm$ orbits of $\fkm$ and $\fkm^*$.  For example we have 
\[ gU_\fkm X U_\fkm h  = U_\fkm(gXh)U_\fkm\qquad\text{and}\qquad g U_\fkm \lambda U_\fkm h = U_\fkm (g\lambda h) U_\fkm\] for $g,h \in U_\fkn$, $X \in \fkm$, $\lambda \in \fkm^*$.  These actions evidently preserve all orbit sizes, so it follows that each left/right/two-sided $U_\fkn$-orbit in $\fkn$ or $\fkn^*$ decomposes as a disjoint union of left/right/two-sided $U_\fkm$-orbits, all of which have the same cardinality.

\begin{remark}
When dealing with the action of $U_\fkn$ on $\fkm^*$, we are careful to distinguish linear functionals by their domains.  To avoid ambiguity, we never implicitly identify $\lambda \in \fkn^*$ with a linear functional on $\fkm$; instead, we will always denote the identification explicitly by writing $\lambda \downarrow \fkm \in \fkm^*$.  Thus, for $\lambda \in \fkn^*$, the orbit $U_\fkn \lambda U_\fkn$ lies in $\fkn^*$, while the orbit $U_\fkn(\lambda \downarrow \fkm)U_\fkn$ lies in $\fkm^*$.
\end{remark}

We can view $U_\fkn$ as acting directly on the sets of superclasses and supercharacters of $U_\fkm$ on the left and right by 
\[\label{character-action-def} \ba  x\cdot \cK_\fkm^g\cdot y &\overset{\mathrm{def}}= \{ 1 + x(h-1)y : h \in \cK_\fkm^g \} = \cK_\fkm^{1 + x(g-1)y}, \\
 x \chi_\fkm^\lambda y(g) &\overset{\mathrm{def}}= \chi_\fkm^\lambda(1+x^{-1}(g-1)y^{-1}) = \chi_\fkm^{x\lambda y}(g),
\ea \qquad\text{for }x,y \in U_\fkn,\ g \in U_\fkm,\ \lambda \in \fkm^*.
\] Of course we have $x\cdot \cK_\fkm^g \cdot y =  \cK_\fkm^g$ and $x\chi_\fkm^\lambda y = \chi_\fkm^\lambda$ if $x,y \in U_\fkm$, so these actions pass to a two-sided action of the quotient group $U_\fkn / U_\fkm$.  Since $U_\fkn$ preserves orbit sizes in $\fkm^*$, it follows that the $U_\fkn$-action on the supercharacters of $U_\fkm$ preserves both degree and inner products, in the sense that 
\be \label{character-action-observation}\ba  x  \chi_\fkm^\lambda  y(1) &=   \chi_\fkm^{x\lambda  y}(1) = \chi_\fkm^\lambda(1), \\
 \langle x  \chi_\fkm^\lambda  y, x  \chi_\fkm^\mu  y \rangle_{U_\fkm} &=
\langle   \chi_\fkm^{x \lambda  y},   \chi_\fkm^{x \mu y} \rangle_{U_\fkm} =  
\langle \chi_\fkm^\lambda, \chi_\fkm^\mu  \rangle_{U_\fkm},\ea \qquad\text{for }x,y\in U_\fkn,\ \lambda,\mu \in \fkm^*.
\ee  

We use these observations below to provide a supercharacter analogue for Clifford's theorem, a classical result which states that the restriction of an irreducible character to a normal subgroup decomposes as a sum of irreducible characters with the same degree and multiplicity.

\begin{theorem}\label{resind-thm}
 Let $\fkm \subset \fkn$ be  nilpotent associative $\FF_q$-algebras with $U_\fkm$ supernormal in $U_\fkn$.  \begin{enumerate}
 \item[(1)] Choose $\lambda \in \fkn^*$ and let $\mu = \lambda \downarrow \fkm \in \fkm^*$.  Then
\[\label{restrict-formula} \chi_\fkn^\lambda \downarrow U_\fkm = \frac{|U_\fkn \lambda| | U_\fkm \mu U_\fkm|}{|U_\fkm \mu| | U_\fkn\mu  U_\fkn|} \sum_\nu \chi_\fkm^\nu\] where the sum is over a set of representatives $\nu \in \fkm^*$ of the distinct two-sided $U_\fkm$-orbits in $U_\fkn\mu U_\fkn$.  Hence $\chi_\fkn^\lambda \downarrow U_\fkm$ decomposes as a sum of $\frac{|U_\fkn \mu U_\fkn|}{|U_\fkm \mu U_\fkm|}$ distinct supercharacters of $U_\fkm$ with the same degree and multiplicity.

\item[(2)] Choose $\mu \in \fkm^*$.  Then
\[\label{sind-formula} \SInd_{U_\fkm}^{U_\fkn} \(\chi_\fkm^\mu \) = \sum_\lambda \frac{|U_\fkm \mu||U_\fkn \lambda U_\fkn|}{|U_\fkn \lambda| |U_\fkn \mu U_\fkn|} \chi_\fkn^\lambda\] where the sum is over a set of representatives $\lambda \in \fkn^*$ of the distinct two-sided $U_\fkn$-orbits in $\fkn^*$ which on restriction to $\fkm$ are equal to $U_\fkn \mu U_\fkn$. 
\end{enumerate}
\end{theorem}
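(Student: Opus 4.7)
The plan is to prove (1) directly from formula (\ref{formula}), and then derive (2) as an orbit-regrouping of Lemma \ref{sind-lemma}. For (1), I start by evaluating $\chi_\fkn^\lambda$ at $g \in U_\fkm$ using (\ref{formula}). The key observation is that $g-1 \in \fkm$, so $\theta \circ \nu(g-1) = \theta \circ (\nu \downarrow \fkm)(g-1)$ depends only on $\nu \downarrow \fkm \in \fkm^*$. I then use Proposition \ref{supernormal-properties}(3), which says $\fkm$ is a two-sided ideal of $\fkn$, to conclude that the restriction map $\fkn^* \to \fkm^*$ is $U_\fkn$-equivariant on both sides: indeed $(x\nu y) \downarrow \fkm = x(\nu \downarrow \fkm) y$ for $x,y \in U_\fkn$ because $\fkm$ is preserved under multiplication by $U_\fkn$ on both sides. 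Consequently this map sends $U_\fkn \lambda U_\fkn$ surjectively onto $U_\fkn \mu U_\fkn$ with constant fiber size $|U_\fkn \lambda U_\fkn|/|U_\fkn \mu U_\fkn|$.

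Next I partition $U_\fkn \mu U_\fkn \subset \fkm^*$ into two-sided $U_\fkm$-orbits. The quotient $U_\fkn/U_\fkm$ acts transitively on these orbits, and this action preserves orbit sizes by observation (\ref{character-action-observation}), so all such $U_\fkm$-orbits have the common cardinality $|U_\fkm \mu U_\fkm|$. Applying formula (\ref{formula}) for each orbit representative $\nu$ rewrites the inner $\theta$-sum as $(|U_\fkm \mu U_\fkm|/|U_\fkm \mu|)\chi_\fkm^\nu(g)$. Combining this with the fiber count and the original prefactor $|U_\fkn \lambda|/|U_\fkn \lambda U_\fkn|$ produces the announced coefficient, and the final claim of (1) about equal degrees and multiplicities is then immediate from observation (\ref{character-action-observation}).

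For (2), Lemma \ref{sind-lemma} already expands $\SInd_{U_\fkm}^{U_\fkn}(\chi_\fkm^\mu)$ as a sum over all individual $\lambda \in \fkn^*$ with $\lambda \downarrow \fkm \in U_\fkm \mu U_\fkm$, and since $\chi_\fkn^\lambda$ depends only on the $U_\fkn$-orbit of $\lambda$, it suffices to group these functionals by their two-sided $U_\fkn$-orbit. For any orbit $\cO = U_\fkn \lambda U_\fkn$, the $U_\fkn$-equivariance of restriction from (1) shows that $\cO \downarrow \fkm$ is a single two-sided $U_\fkn$-orbit in $\fkm^*$, so $\cO$ contributes nonzero terms precisely when $\cO \downarrow \fkm = U_\fkn \mu U_\fkn$. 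The constant-fiber count from (1) then gives exactly $|U_\fkm \mu U_\fkm| \cdot |U_\fkn \lambda U_\fkn|/|U_\fkn \mu U_\fkn|$ elements of $\cO$ contributing, and substituting this into Lemma \ref{sind-lemma} and simplifying yields the stated formula.

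The main obstacle is not conceptually deep but requires careful bookkeeping: I must verify that $\nu \mapsto \nu \downarrow \fkm$ commutes with the left and right $U_\fkn$-actions, that the fibers over a fixed $U_\fkn$-orbit in $\fkm^*$ have constant size, and that $U_\fkn \mu U_\fkn$ breaks into $U_\fkm$-orbits of uniform size $|U_\fkm \mu U_\fkm|$. All three reduce to $\fkm$ being a two-sided ideal together with observation (\ref{character-action-observation}), but the arithmetic of the prefactors needs attention to make the cancellations come out cleanly to the claimed coefficients.
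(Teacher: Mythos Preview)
Your proposal is correct, and your argument for part (2) is essentially the paper's own argument: both start from Lemma \ref{sind-lemma}, use the $U_\fkn$-equivariance of restriction to identify which two-sided $U_\fkn$-orbits contribute, and then count the elements of a contributing orbit $U_\fkn\lambda U_\fkn$ landing in $U_\fkm\mu U_\fkm$ via the constant-fiber argument.

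For part (1), however, you take a genuinely different route. The paper proves (2) first and then deduces (1) by Frobenius reciprocity \eqref{adjoint}: it computes $\langle \chi_\fkm^\nu, \chi_\fkn^\lambda\downarrow U_\fkm\rangle_{U_\fkm} = \langle \SInd(\chi_\fkm^\nu), \chi_\fkn^\lambda\rangle_{U_\fkn}$ using the formula just established in (2), and then divides by $\langle \chi_\fkm^\nu,\chi_\fkm^\nu\rangle_{U_\fkm}$, simplifying via the identities $|U_\fkn\lambda|/\langle\chi_\fkn^\lambda,\chi_\fkn^\lambda\rangle = |U_\fkn\lambda U_\fkn|/|U_\fkn\lambda|$ and the analogous one for $\mu$. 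Your argument instead works directly from the defining formula \eqref{formula}: for $g\in U_\fkm$ you push the $\theta$-sum over $U_\fkn\lambda U_\fkn$ through restriction to $\fkm$ (fiber count $|U_\fkn\lambda U_\fkn|/|U_\fkn\mu U_\fkn|$), then split $U_\fkn\mu U_\fkn$ into equal-size $U_\fkm$-orbits and recognize each block as $\tfrac{|U_\fkm\mu U_\fkm|}{|U_\fkm\mu|}\chi_\fkm^\nu(g)$. This is a clean direct computation that avoids the inner-product bookkeeping entirely; the paper's route, in turn, makes the duality between (1) and (2) explicit. Both rely on the same structural inputs---that $\fkm$ is a two-sided ideal (Proposition \ref{supernormal-properties}) and that the $U_\fkn$-action on $\fkm^*$ preserves $U_\fkm$-orbit sizes---so neither is strictly stronger, just differently organized.
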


\begin{proof}
We first prove (2).  Observe that if 
$\lambda \in \fkn^*$, then $g(\lambda \downarrow \fkm)h = g\lambda h \downarrow \fkm$ for all $g,h \in U_\fkn$, since %$g^{-1}\fkm h^{-1} = \fkm$ and so
 \[ (g\lambda  h \downarrow \fkm)(X) = (g\lambda h)(X) =  \lambda(g^{-1}X h^{-1}) = (\lambda\downarrow \fkm)(g^{-1}X h^{-1}) = \(g(\lambda\downarrow \fkm) h\)(X),\qquad\text{for }X \in \fkm.\] 
 Consequently, from Lemma \ref{sind-lemma} we see that $\chi^\lambda_\fkn$ appears as a constituent of $\SInd_{U_\fkm}^{U_\fkn}(\chi_\fkm^\mu)$ if and only if $U_\fkn \lambda U_\fkn$ is equal to $U_\fkn \mu U_\fkm$ on restriction to $\fkm$, and in this case the number of elements of $U_\fkn \lambda U_\fkn$ which restrict to elements of $U_\fkm \mu U_\fkm$ is equal to $|U_\fkn \lambda U_\fkn|$ divided by $\frac{|U_\fkn \mu U_\fkn|}{|U_\fkm \mu U_\fkm|}$, the number of two-sided $U_\fkm$-orbits in $U_\fkn \mu U_\fkn$.  Thus by Lemma \ref{sind-lemma} we have 
 \[ \ba \SInd_{U_\fkm}^{U_\fkn}(\chi_\fkm^\mu) &= \sum_{\substack{\lambda \in \fkn^*\\ \lambda \downarrow \fkm  \in U_\fkm \mu U_\fkm}} \frac{|U_\fkm \mu|}{|U_\fkm \mu U_\fkm| |U_\fkn \lambda|}  \chi_\fkn^\lambda
  = \frac{|U_\fkn \lambda U_\fkn|}{|U_\fkn \mu U_\fkn| / |U_\fkm \mu U_\fkm|} \sum_{\lambda} \frac{|U_\fkm \mu|}{|U_\fkm \mu U_\fkm| |U_\fkn \lambda|}  \chi_\fkn^\lambda
\\&  = \sum_\lambda \frac{|U_\fkm \mu||U_\fkn \lambda U_\fkn|}{|U_\fkn \lambda| |U_\fkn \mu U_\fkn|} \chi_\fkn^\lambda\ea\] where the last two sums are over a set of representatives $\lambda \in \fkn^*$ of the distinct two-sided $U_\fkn$-orbits in $\fkn^*$ which on restriction to $\fkm$ are equal to $U_\fkn \mu U_\fkn$.  

To prove (1), we observe that if $\lambda \in \fkn^*$ has $ \lambda \downarrow \fkm = \mu$ and $\nu \in \fkn^*$, then by reciprocity
\[  \left\langle \chi_\fkm^\nu, \chi_\fkn^\lambda \downarrow U_\fkm \right\rangle_{\fkm} 
=
 \left\langle \SInd_{U_\fkm}^{U_\fkn}\(\chi_\fkm^\nu\), \chi_\fkn^\lambda \right\rangle_{U_\fkn}
 =
\left\{
\barr{ll} \displaystyle\frac{|U_\fkm \nu||U_\fkn \lambda U_\fkn|}{|U_\fkn \lambda| |U_\fkn \nu U_\fkn|} \left\langle \chi_\fkn^\lambda, \chi_\fkn^\lambda \right\rangle_{U_\fkn} ,& \text{if }\nu \in  U_\fkn \mu U_\fkn,
\\
0,& \text{otherwise}.\earr\right.\] Hence 
\[  \chi_\fkn^\lambda \downarrow U_\fkm = \sum_\nu \frac{|U_\fkm \nu||U_\fkn \lambda U_\fkn|}{|U_\fkn \lambda| |U_\fkn \nu U_\fkn|} \frac{\left\langle \chi_\fkn^\lambda, \chi_\fkn^\lambda \right\rangle_{U_\fkn}}{\left\langle \chi_\fkm^\nu, \chi_\fkm^\nu \right\rangle_{U_\fkm}} \chi_\fkm^\nu =
  \frac{|U_\fkm \mu||U_\fkn \lambda U_\fkn|}{|U_\fkn \lambda| |U_\fkn \mu U_\fkn|} \frac{\left\langle \chi_\fkn^\lambda, \chi_\fkn^\lambda \right\rangle_{U_\fkn}}{\left\langle \chi_\fkm^\mu, \chi_\fkm^\mu \right\rangle_{U_\fkm}} \sum_\nu\chi_\fkm^\nu \]
by (\ref{character-action-observation}), where both sums are over a set of representatives $\nu \in \fkm^*$ of the distinct two-sided $U_\fkm$-orbits in $U_\fkn\mu U_\fkn$.  (1) now follows from the observation that  
$\frac{|U_\fkn \lambda|}{\left\langle \chi_\fkn^\lambda, \chi_\fkn^\lambda \right\rangle_{U_\fkn}} = \frac{|U_\fkn \lambda U_\fkn|}{|U_\fkn \lambda|}$%\qquad\text{and}\qquad 
 and $\frac{|U_\fkm \mu|}{\left\langle \chi_\fkm^\mu, \chi_\fkm^\mu \right\rangle_{U_\fkm}} = \frac{|U_\fkm \mu U_\fkm|}{|U_\fkm \mu|}$.%,\] (1) follows.  
\end{proof}

\def\proj{\mathrm{proj}}

Between any two nilpotent $\FF_q$-algebras $\fkm \subset \fkn$, we can insert a finite sequence of subalgebras 
\[ \fkm = \fkn_0 \subset \fkn_1 \subset \dots \fkn_k = \fkn\] such that $\fkn_{i-1}$ is an ideal in $\fkn_i$ of codimension one.  In particular, by Corollary 6.2 in \cite{DI06}, we can take each $\fkn_{i-1}$ to be a maximal proper subalgebra of $\fkn_{i}$.  The normal sequence of algebra groups
\[ U_\fkm = U_{\fkn_0} \vartriangleleft U_{\fkn_1}   \vartriangleleft \dots \vartriangleleft U_{\fkn_k} = U_{\fkn}\] is then supernormal.  Thus, specializing Theorem \ref{resind-thm} to the case when $\fkm \subset \fkn$ is a subalgebra of codimension one will tell us in some sense how to compute the restriction of a supercharacter to any algebra subgroup.

In this direction, we begin by recalling an analogous result for irreducible characters.  Suppose $G$ is a finite group and $p$ is the smallest prime dividing $|G|$. If $H$ is a subgroup of index $p$, then $H$ is automatically normal \cite[Lemma I.6.7]{Lang} and the irreducible characters $\chi$ of $G$ restrict to $H$ in one of two ways.  In particular, if we let $\gamma$ be a nontrivial irreducible character of the abelian quotient $G/H \cong \ZZ/p\ZZ$ and denote by $\wt \gamma$ its lift to $G$
(that is, $\wt \gamma(g) = \gamma(gH)$ for $g \in G$), then one of the following occurs:
\begin{enumerate}
\item[(1)] If $\chi \in \Irr(G)$ has $\chi \otimes \wt\gamma = \chi$, then $\chi \downarrow H$ decompose as a sum of $p$ irreducible characters.
\item[(2)] If $\chi \in \Irr(G)$ has $\chi\otimes \wt\gamma \neq \chi$, then $\chi \downarrow H$ is irreducible.
\end{enumerate}
To provide a supercharacter analogue for this result, we note (as above) that if $\fkn$ is a nilpotent $\FF_q$-algebra and $\fkm$ is a subalgebra of codimension one, then $\fkm$ is maximal, and hence a two-sided ideal by \cite[Corollary 6.2]{DI06}.  We now have the following proposition.

%
%\begin{lemma}
% Let $\fkn$ be a nilpotent  $\FF_q$-algebra, and suppose $\fkm \subset \fkn$ is a maximal subalgebra.  Then $\fkm$ is a two-sided ideal of $\fkn$ of codimension one, and $U_\fkm$ is a supernormal subgroup of $U_\fkn$ of index $q$.
%\end{lemma}

%%\begin{proof}
%% Choose an arbitrary $Y \in \fkn - \fkm$ so that $\fkn = \FF_q Y \oplus \fkm$ as a vector space.  If $X \in \fkm$ then $XY = c X + Z$ for some $c \in \FF_q$ and $Z\in \fkm$, in which case by induction $X^kY = c^k Y + \sigma_k$ where $\sigma_k=\sum_{i=0}^{k-1} c^i X^{k-1-i}Z \in \fkm$ for all $k>0$.  Since $\fkm$ is nilpotent, we have $X^k = 0$ for some $k>0$, in which case $0 = X^kY = c^k Y + \sigma_k$.  If $c\neq 0$ then this is a contradiction since $c^k\neq 0$ and $\sigma_k \in \fkm$.  Hence $c= 0$ and $XY \in \fkm$ for all $X \in \fkm$.  A symmetric argument shows that $YX \in \fkm$ for all $X \in \fkm$, and so for all $X,Z \in \fkm$ and $t\in \FF_q$, we have $(tY+Z)X = tYX + ZX \in \fkm$ and $X(tY+Z) = tXY + XZ \in \fkm$.  In other words, $AX,XA \in \fkm$ for all $A \in \fkn$ and $X \in \fkm$, so $U_\fkm$ is supernormal in $U_\fkn$ by Proposition \ref{supernormal-properties}.
%%\end{proof}

\begin{proposition}\label{codim1-prop}
  Let $\fkm \subset \fkn$ be two nilpotent  $\FF_q$-algebras, and suppose $\fkm$ has codimension one in $\fkn$.  Choose a nontrivial supercharacter $\gamma$ of the algebra group $U_{\fkn/\fkm}$ and let $\wt \gamma$ denote its lift to $G$ is the sense of Proposition \ref{lift-prop}.  If $\chi$ is a supercharacter of $U_\fkn$, then the following statements hold:
  
  \begin{enumerate}
 \item[(1)]   $\chi \downarrow U_\fkm$ is equal to $q^a$ times the sum of $q^b$ distinct supercharacters of $U_\fkm$ of the same degree, where $a,b$ are nonnegative integers with $a+b\leq 2$.  
 
 \item[(2)] If $\chi$ is irreducible, then $a = 0$ and $b \in \{0,1\}$.
 
 \item[(3)] 
$\chi \downarrow U_\fkm$ is a supercharacter of $U_\fkm$ (i.e., $a=b=0$) if $\chi \otimes \wt \gamma \neq \chi$ .   
\end{enumerate}
 
\end{proposition}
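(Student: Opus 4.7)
The central structural fact is that since $\fkn/\fkm$ is a one-dimensional nilpotent $\FF_q$-algebra, its multiplication must be identically zero, forcing $\fkn^2 \subseteq \fkm$. Consequently $\fkm^\perp \subset \fkn^*$ is a one-dimensional subspace pointwise fixed by the two-sided $U_\fkn$-action: for $\eta \in \fkm^\perp$, $g \in U_\fkn$, and $X \in \fkn$, the difference $(g\eta - \eta)(X) = \eta((g^{-1}-1)X)$ vanishes because $(g^{-1}-1)X \in \fkn^2 \subseteq \fkm$. Writing $\wt\gamma = \chi_\fkn^{\lambda_\gamma}$ for the nonzero $\lambda_\gamma \in \fkm^\perp$ determining $\gamma$, substitution into formula (\ref{formula}) yields the tensor identity $\chi_\fkn^\lambda \otimes \wt\gamma = \chi_\fkn^{\lambda + \lambda_\gamma}$ for every $\lambda \in \fkn^*$.

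For part (1), I apply Theorem \ref{resind-thm}(1) with $\mu = \lambda \downarrow \fkm$ to write $\chi_\fkn^\lambda \downarrow U_\fkm = c \sum_\nu \chi_\fkm^\nu$. Every orbit size appearing in the explicit formulas is a power of $q$ (by the $\FF_q$-subspace structure in property (4) of Section \ref{sct-ag}), so $c = q^a$ and $N = q^b$ are nonnegative integer powers of $q$. The bound $a + b \leq 2$ follows from $cN = |U_\fkn\lambda|/|U_\fkm\mu|$ together with the factorization
\[ \frac{|U_\fkn\lambda|}{|U_\fkm\mu|} = \frac{|U_\fkn\lambda|}{|U_\fkm\lambda|} \cdot \frac{|U_\fkm\lambda|}{|U_\fkm\mu|}, \]
in which the first factor is bounded by $[U_\fkn : U_\fkm] = q$ since $U_\fkn\lambda$ is a union of equal-size $U_\fkm$-orbits permuted by $U_\fkn/U_\fkm$, and the second factor is bounded by $|\fkm^\perp| = q$ via the injection $\mathrm{Stab}_L^\fkm(\mu)/\mathrm{Stab}_L^\fkm(\lambda) \hookrightarrow \fkm^\perp$, $g \mapsto g\lambda - \lambda$, whose kernel identification uses precisely that $(g\lambda - \lambda)\downarrow\fkm = g\mu - \mu = 0$.

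For part (3), the tensor identity converts the hypothesis $\chi \otimes \wt\gamma \neq \chi$ into $\lambda_\gamma \notin U_\fkn\lambda U_\fkn - \lambda$, and in particular $\lambda_\gamma \notin U_\fkn\lambda - \lambda$ and $\lambda_\gamma \notin \lambda U_\fkn - \lambda$. The sets $V_L := \{s \in \FF_q : \lambda + s\lambda_\gamma \in U_\fkn\lambda\}$ and $V_R := \{s : \lambda + s\lambda_\gamma \in \lambda U_\fkn\}$ are $\FF_q$-subspaces of $\FF_q$, since $V_L \lambda_\gamma = \FF_q\lambda_\gamma \cap (U_\fkn\lambda - \lambda)$ is an intersection of two $\FF_q$-subspaces of $\fkn^*$ by property (4). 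Hence each of $V_L, V_R$ is either $\{0\}$ or all of $\FF_q$, and the hypothesis forces both to be $\{0\}$, giving $|U_\fkn\lambda| = |U_\fkn\mu|$ and $|\lambda U_\fkn| = |\mu U_\fkn|$. A parallel fiber-counting for the two-sided orbits, leveraging the $U_\fkn$-fixity of $\fkm^\perp$ to identify $U_\fkn$-orbits in the fiber $\lambda + \fkm^\perp$ of the restriction map with cosets of a subgroup of $\FF_q$, shows that $|U_\fkn\lambda U_\fkn| = |U_\fkn\mu U_\fkn|$ as well under the hypothesis. Feeding these equalities into Theorem \ref{resind-thm}(1) produces $c = N = 1$, so $\chi \downarrow U_\fkm = \chi_\fkm^\mu$. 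Part (2) follows from a Clifford-style argument on the abelian $p$-group quotient $U_\fkn/U_\fkm \cong \FF_q^+$: irreducibility of $\chi_\fkn^\lambda$ constrains the supercharacter decomposition of $\chi_\fkn^\lambda \downarrow U_\fkm$ to a form compatible with Clifford's theorem, and the tensor identity then parametrizes the $q$ potential twists $\chi_\fkn^{\lambda + s\lambda_\gamma}$ that restrict identically to $U_\fkm$, forcing $c = 1$ and, together with the bound from part (1), restricting $b$ to $\{0, 1\}$.

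The main obstacle will be the two-sided rigidity step in part (3): unlike $V_L$ and $V_R$, the corresponding two-sided set $U_\fkn\lambda U_\fkn - \lambda$ is not manifestly an $\FF_q$-subspace of $\fkn^*$, so deducing $|U_\fkn\lambda U_\fkn| = |U_\fkn\mu U_\fkn|$ from the hypothesis requires a delicate orbit analysis in the fiber of the restriction map, exploiting the $U_\fkn$-fixity of $\fkm^\perp$ and the orbit-size identities from Section \ref{sct-ag}, particularly the relation $|U_\fkn\lambda U_\fkn| = |U_\fkn\lambda|^2/|U_\fkn\lambda \cap \lambda U_\fkn|$ that ties the two-sided picture back to the one-sided data I have already pinned down.
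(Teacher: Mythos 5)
Your setup matches the paper's reduction exactly: $\fkn^2\subseteq\fkm$, the two-sided fixity of $\fkm^\perp$, and the tensor identity $\chi_\fkn^\lambda\otimes\wt\gamma=\chi_\fkn^{\lambda+\alpha}$ are precisely how the paper converts the hypothesis of (3) into the orbit condition $\lambda+\alpha\notin U_\fkn\lambda U_\fkn$, and your part (1) is essentially correct: factoring $|U_\fkn\lambda|/|U_\fkm\mu|$ as $\bigl(|U_\fkn\lambda|/|U_\fkm\lambda|\bigr)\bigl(|U_\fkm\lambda|/|U_\fkm\mu|\bigr)$ and bounding each factor by $q$ is a legitimate (slightly different) route to $a+b\leq 2$, whereas the paper factors through $|U_\fkn\mu|$ in its Lemma \ref{5.1}. (You should still quote \cite[Theorem 6.4]{DI06}, as the paper does in Section \ref{resind-sect}, to know the coefficient is a positive \emph{integer} power of $q$, i.e.\ $a\geq 0$.)

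Part (3), however, has a genuine gap. By Theorem \ref{resind-thm}(1) the coefficient and the number of constituents are $c=\frac{|U_\fkn\lambda|\,|U_\fkm\mu U_\fkm|}{|U_\fkm\mu|\,|U_\fkn\mu U_\fkn|}$ and $N=\frac{|U_\fkn\mu U_\fkn|}{|U_\fkm\mu U_\fkm|}$, so $cN=|U_\fkn\lambda|/|U_\fkm\mu|$. The three equalities you extract from the hypothesis --- $|U_\fkn\lambda|=|U_\fkn\mu|$, $|\lambda U_\fkn|=|\mu U_\fkn|$, and $|U_\fkn\lambda U_\fkn|=|U_\fkn\mu U_\fkn|$ --- compare orbits of $\lambda$ only with \emph{$U_\fkn$-orbits} of $\mu$; none of them controls $|U_\fkm\mu|$ or $|U_\fkm\mu U_\fkm|$, so feeding them into the restriction formula yields only $cN=|U_\fkn\mu|/|U_\fkm\mu|$, which a priori could still equal $q$. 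The missing ingredient is exactly the cross-over statement occupying the second half of the paper's Lemma \ref{5.1}: one shows the dichotomy (\ref{claim}), namely $|U_\fkn\mu|/|U_\fkm\mu|\in\{1,q\}$ with value $1$ precisely when some $g\in U_\fkn\setminus U_\fkm$ fixes $\mu$, and then proves such $g$ exists if and only if the subspace $S=\{X\in\fkn:\mu(X M)=0\ \forall M\in\fkm\}$ is not contained in $\fkm$, which is in turn equivalent to $\lambda+\alpha\notin\lambda U_\fkm$ --- a condition your hypothesis does provide, but only after this equivalence (relating a \emph{left} orbit ratio for $\mu$ to a \emph{right} $U_\fkm$-orbit condition on $\lambda$) has been established. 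Without it the conclusion $c=N=1$ is a non sequitur. Two side remarks: your two-sided fiber set $\{s\in\FF_q:\lambda+s\alpha\in U_\fkn\lambda U_\fkn\}$ is actually an additive subgroup of $\FF_q^+$, since $g(\lambda+t\alpha)h=g\lambda h+t\alpha$, so for $q$ prime your equality $|U_\fkn\lambda U_\fkn|=|U_\fkn\mu U_\fkn|$ does hold (for $q$ a proper prime power your sketch does not even give this) --- it is simply not the equality that is needed; and your part (2) is too vague to verify as written, though the paper's own treatment is comparably brief, resting on the classical Clifford-type statement for the index-$q$ normal subgroup quoted just before the proposition.
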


\begin{remark} In contrast to the irreducible case, the condition $\chi \otimes \wt \gamma \neq \chi$ in (3) is not necessary for $\chi \downarrow U_\fkm$ to be a supercharacter of $U_\fkm$.  For example, if $\chi$ is irreducible and $\chi \otimes \wt \gamma = \chi$, then by the result in the irreducible case $\chi \downarrow U_\fkm$ is a sum of $q$ irreducible characters.  It can occur that this sum is equal to a single supercharacter of $U_\fkm$.  For example, if 
\[ \fkm = \left\{ \(\barr{ccc } 0 & a & b \\ & 0 & a \\ & & 0 \earr\) : a,b \in \FF_q \right\}
 \qquad\text{and}\qquad
  \fkn = \fkn_3(q)=
  \left\{ \(\barr{ccc } 0 & a  & b \\ & 0 & c \\ & & 0 \earr\) : a,b,c \in \FF_q \right\}\]
then taking $\chi = \chi_\fkn^{\lambda}$ and $\wt \gamma = \chi_\fkn^{\alpha}$ where $\lambda = e_{13}^* \in \fkn^*$ and $\alpha = e_{12}^* - e_{23}^* \in \fkm^\perp$, we have using   Corollary 4.7 in \cite{T09}   that $\chi \otimes \wt \gamma = \chi$, but  by Lemma \ref{5.1} below $\chi \downarrow U_\fkm = \chi_\fkm^{\mu}$ is a supercharacter, where $\mu = \lambda \downarrow \fkm$.
\end{remark}
 
 We prove the proposition using the following lemma, which gives a simple method of determining whether $\chi$ restricts to a supercharacter once we have chosen representative maps in $\fkn^*$ for $\chi$ and $\wt \gamma$.

\begin{lemma}\label{5.1}
Retaining the notation of Proposition \ref{codim1-prop}, let $\chi = \chi_\fkn^\lambda$ for some  $\lambda \in \fkn^*$.  Let $\mu = \lambda\downarrow \fkm \in \fkm^*$, and choose a nonzero element $\alpha \in \fkm^\perp = \{ \eta \in \fkn^* : \ker \eta \supset \fkm \}$.  Define integers $\delta_{\mathrm L}, \delta_{\mathrm R}, \delta_{\mathrm L}',\delta_{\mathrm R}' \in \{0,1\}$ by  
\[ \barr{ccc} \delta_{\mathrm L} = \left\{\barr{ll} 1,&\text{if } \lambda +\alpha \in U_\fkn \lambda, \\ 0,&\text{otherwise}, \earr\right. & &
  \delta_{\mathrm L}' = \left\{\barr{ll} 1,&\text{if } \lambda +\alpha \in U_\fkm \lambda, \\ 0,&\text{otherwise}, \earr\right.
 \\ &
 \\
% \ea \qquad\text{and}\qquad
    \delta_{\mathrm R} = \left\{\barr{ll} 1,&\text{if } \lambda +\alpha \in \lambda U_\fkn , \\ 0,& \text{otherwise}, \earr\right. & &
 \delta_{\mathrm R}' = \left\{\barr{ll} 1,&\text{if } \lambda +\alpha \in \lambda U_\fkm . \\ 0,& \text{otherwise} . \earr\right.
\earr \]  We then have
\[q^{\delta_{\mathrm L}} =  \frac{|U_\fkn \lambda|}{|U_\fkn \mu|},\qquad q^{\delta_{\mathrm R}} =  \frac{|\lambda U_\fkn |}{| \mu U_\fkn|},\qquad  q^{\delta'_{\mathrm L}} = \frac{| \mu U_\fkn|}{| \mu U_\fkm|},\qquad q^{\delta'_{\mathrm R}} = \frac{|U_\fkn \mu|}{|U_\fkm \mu|},\] and $a+b = \delta_{\mathrm L} + \delta_{\mathrm R}' = \delta_{\mathrm L}' + \delta_{\mathrm R}$.  Consequently $a+b = 0$ if and only if $\lambda + \alpha \notin U_\fkn \lambda \cup \lambda U_\fkn$ and $a+b = 2$ if and only if $\lambda + \alpha \in U_\fkm \lambda \cap \lambda U_\fkm$.

  \end{lemma}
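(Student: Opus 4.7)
The plan is to establish the four orbit-size identities for the $\delta$'s, combine them with Theorem~\ref{resind-thm}(1) to obtain $a+b = \delta_{\mathrm L}+\delta_{\mathrm R}' = \delta_{\mathrm L}'+\delta_{\mathrm R}$, and then read off the two equivalences for $a+b \in \{0,2\}$.

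Identities 1 and 2 are direct. Because $\fkm$ is a two-sided ideal in $\fkn$, for $g \in U_\fkn$ we have $(g\lambda - \lambda)|_\fkm = g\mu - \mu$, so the restriction map $r : \fkn^* \to \fkm^*$ carries the subspace $U_\fkn\lambda - \lambda \subset \fkn^*$ onto $U_\fkn\mu - \mu \subset \fkm^*$. Since $\ker r = \fkm^\perp = \FF_q\alpha$, the ratio $|U_\fkn\lambda|/|U_\fkn\mu|$ equals $|(U_\fkn\lambda - \lambda) \cap \FF_q\alpha|$, which is $q$ exactly when $\alpha \in U_\fkn\lambda - \lambda$, i.e.\ exactly when $\delta_{\mathrm L} = 1$. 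Identity 2 is the mirror statement for right orbits.

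Identities 3 and 4 are where the real work lies. For Identity 3, I will introduce the bilinear form $B(X,Y) = \lambda(XY)$ on $\fkn$ and the map $\tilde\phi_R : \fkn \to \fkm^*$ given by $\tilde\phi_R(Y)(X) = B(X,Y)$ for $X \in \fkm$. A short calculation shows $\mu U_\fkn - \mu = \tilde\phi_R(\fkn)$ and $\mu U_\fkm - \mu = \tilde\phi_R(\fkm)$, so by rank--nullity and the codimension-one hypothesis, $|\mu U_\fkn|/|\mu U_\fkm|$ equals $q$ precisely when $\ker\tilde\phi_R \subset \fkm$. The main obstacle is the duality step: one must recognize $\ker\tilde\phi_R = \{Y \in \fkn : B(A,Y) = 0\ \forall A \in \fkm\}$ as the annihilator in $\fkn$ of the subspace $U_\fkm\lambda - \lambda$ of $\fkn^*$, so by standard biduality $\alpha \in U_\fkm\lambda - \lambda$ iff $\alpha$ vanishes on $\ker\tilde\phi_R$; since $\alpha$ is a nonzero element of $\fkm^\perp$ we have $\ker\alpha = \fkm$, and the vanishing condition becomes exactly $\ker\tilde\phi_R \subset \fkm$. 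This identifies $\delta_{\mathrm L}' = 1$ with the orbit-ratio condition, proving Identity 3; Identity 4 follows by the symmetric argument with left and right interchanged.

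With the four identities in hand, Theorem~\ref{resind-thm}(1) yields $q^{a+b} = |U_\fkn\lambda|/|U_\fkm\mu|$. Telescoping through $|U_\fkn\mu|$ gives $q^{a+b} = q^{\delta_{\mathrm L}+\delta_{\mathrm R}'}$, and telescoping through $|\mu U_\fkn|$ together with the identities $|U_\fkn\lambda| = |\lambda U_\fkn|$ and $|U_\fkm\mu| = |\mu U_\fkm|$ gives $q^{a+b} = q^{\delta_{\mathrm R}+\delta_{\mathrm L}'}$. The inclusions $U_\fkm\lambda \subset U_\fkn\lambda$ and $\lambda U_\fkm \subset \lambda U_\fkn$ force $\delta_{\mathrm L}' \leq \delta_{\mathrm L}$ and $\delta_{\mathrm R}' \leq \delta_{\mathrm R}$; hence $a+b = 0$ collapses all four $\delta$'s to zero, which is equivalent to $\lambda+\alpha \notin U_\fkn\lambda \cup \lambda U_\fkn$, while $a+b = 2$ forces them all to equal one, which is equivalent to $\lambda+\alpha \in U_\fkm\lambda \cap \lambda U_\fkm$.
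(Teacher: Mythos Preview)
Your proof is correct and follows the same overall architecture as the paper: establish the four orbit-ratio identities, telescope using $|U_\fkn\lambda|=|\lambda U_\fkn|$ and $|U_\fkm\mu|=|\mu U_\fkm|$, then read off the two extremal cases via the obvious inequalities $\delta_{\mathrm L}'\le\delta_{\mathrm L}$ and $\delta_{\mathrm R}'\le\delta_{\mathrm R}$. Identities 1 and 2 are argued identically in both proofs.

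The only real difference is in Identities 3 and 4. The paper first establishes a coset-counting dichotomy (the ratio $|U_\fkn\mu|/|U_\fkm\mu|$ is $1$ or $q$ according to whether some $g\in U_\fkn\setminus U_\fkm$ fixes $\mu$), then introduces the subspace $S=\{X\in\fkn:\lambda(XM)=0\text{ for all }M\in\fkm\}$ and argues that $S\not\subset\fkm$ is equivalent to $\lambda+\alpha\notin\lambda U_\fkm$. Your bilinear-form argument is a cleaner repackaging of exactly this: your $\ker\tilde\phi_R$ is the paper's $S$ (with left and right interchanged), and your rank--nullity step replaces the coset dichotomy while your biduality step makes explicit the duality the paper invokes in the line ``$S\not\subset\fkm$ if and only if $(\lambda U_\fkm-\lambda)\not\supset\fkm^\perp$.'' The gain is that you never need the separate argument that the ratio is $1$ or $q$; this falls out of rank--nullity and the codimension-one hypothesis directly.
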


\begin{proof}
% By Theorem \ref{resind-thm}, we know that $q^{a+b} = \frac{|U_\fkn \lambda|}{|U_\fkm \mu|} = \frac{|U_\fkn \lambda|}{|U_\fkn \mu|} \frac{|U_\fkn \mu|}{|U_\fkm \mu|}$, and so the lemma will follow if we can show that $q^{\delta_{\mathrm L}} =  \frac{|U_\fkn \lambda|}{|U_\fkn \mu|}$ and $q^{\delta_{\mathrm R}} = \frac{|U_\fkn \mu|}{|U_\fkm \mu|}$.
We first note that
$\fkm^\perp = \FF_q \spanning\{\alpha\}$, since the dimension of $\fkm^\perp$ is the codimension of $\fkm $ in $\fkn$, which equals one.  Let $f: \fkn^* \rightarrow \fkm^*$ denote the restriction map $f(\eta) = \eta \downarrow \fkm$; then $f$ is a linear surjection with kernel $\fkm^\perp$ of cardinality $q$.  Next let $V = (U_\fkn \lambda - \lambda) \subset \fkn^*$ and $W = (U_\fkn \mu - \mu) \subset \fkm^*$.  Then both sets are $\FF_q$-vector spaces (by the usual arguments; see \cite[Lemma 4.2]{DI06}) and $f(V) = W$, so $|W| =|V|/ |\ker (f\downarrow V)| $. Since $\lambda + \alpha \in U_\fkn \lambda$ if and only if $\alpha \in V$, and since $\ker(f\downarrow V)=\ker(f) = \fkm^\perp$ if and only if $\alpha \in V$, %and $\ker(f \downarrow V) = \{0\}$ otherwise, 
it follows that $  \frac{|U_\fkn \lambda|}{|U_\fkn \mu|} = \frac{|V|}{|W|} = |\ker(f\downarrow V)| = q^{\delta_{\mathrm L}}$.  The formula for $q^{\delta_{\mathrm R}}$ follows by the same argument switched from left to right.

Next, we claim that 
\be\label{claim} \frac{|U_\fkn \mu|}{ | U_\fkm \mu |} = \left\{\barr{ll} 1,& \text{if there exists $g \in U_\fkn - U_\fkm$ with $g\mu = \mu$,} \\  q, & \text{otherwise.}\earr\right.\ee  To see this note that if no such $g$ exists then for any choice of representatives $g_1,\dots,g_q \in U_\fkn$ of the (right) cosets of $U_\fkm$ in $U_\fkn$, the sets $U_\fkm g_i \mu$ are disjoint and of equal cardinality, meaning $|U_\fkn \mu| = q|U_\fkm \mu|$.  On the other hand, suppose there exists some $g \in U_\fkn - U_\fkm$ with $g\mu = \mu$, so that $g = (1+G)^{-1}$ for some $G\notin \fkm$.  The elements $g_t \overset{\mathrm{def}}= (1+tG)^{-1}$ for $t \in \FF_q$ then form a set of representatives of the distinct right cosets of $U_\fkm$ in $U_\fkn$.  This follows since the cosets $U_\fkm g_t^{-1}$ are disjoint, as for any $M \in \fkm$ we have $(1+M)g_t^{-1} = 1 + (tG + \wt M)$ where $\wt M = Mg_t^{-1} \in \fkm$.  Therefore the cosets $U_\fkm g_t$ are disjoint, and furthermore, $g_t\mu = \mu + t(g\mu-\mu)=\mu$ for each $t \in \FF_q$.  Consequently,  $U_\fkm \mu = U_\fkn \mu$ as $U_\fkn = \bigcup_{t \in \FF_q} U_\fkm g_t$.  Thus, in this second case, $\frac{|U_\fkn \mu|}{ | U_\fkm \mu |} = 1$, which proves (\ref{claim}).

Now, we claim there exists $g \in U_\fkn - U_\fkm$ with $g\mu = \mu$ if and only if $S \not\subset \fkm$, where $S$ is the subspace of $\fkn$ defined by 
\[\ba S &\overset{\mathrm{def}}= \{ X \in \fkn : (\lambda h - \lambda)(X) = 0\text{ for all }h \in U_\fkm \} 
= \{ X \in \fkn : \lambda(XM) = 0\text{ for all }M \in \fkm \}
\\& = \{ X \in \fkn : \mu(XM) = 0\text{ for all }M \in \fkm\}.
\ea\]  Here the last equality follows by noting that $XM \in \fkm$ for all $X \in \fkn$ and $M \in \fkm$, so by definition $\lambda(XM) = \mu(XM)$.  Our claim now follows by noting that $g = (1+X)^{-1} \in U_\fkn$ has $g\mu = \mu$ if and only if $X \in S$.  Using the fact that $S$ is a subspace, one can check that %$S^\perp = (\lambda U_\fkm - \lambda)$, we have  
$S\not\subset \fkm$ if and only if $(\lambda U_\fkm - \lambda) \not\supset \fkm^\perp$, which is equivalent to the condition $\lambda +\alpha \notin \lambda U_\fkm $ since $\fkm^\perp$ is 1-dimensional.  We therefore conclude that  $\frac{|U_\fkn \mu|}{ | U_\fkm \mu |} = q^{\delta_{\mathrm R}'}$.  As before, the formula for $q^{\delta_{\mathrm L}'}$ follows by symmetric arguments.

We now have  $a+b = \delta_{\mathrm L} + \delta_{\mathrm R}' = \delta_{\mathrm L}' + \delta_{\mathrm R}$ since
\[q^{a+b} = \frac{|U_\fkn \lambda|}{|U_\fkm \mu|} = \frac{|U_\fkn \lambda|}{|U_\fkn \mu|} \frac{|U_\fkn \mu|}{|U_\fkm \mu|}= \frac{|\lambda U_\fkn |}{|\mu U_\fkm |} = \frac{| \lambda  U_\fkn |}{|\mu U_\fkn|} \frac{|\mu U_\fkn|}{|\mu U_\fkm|}\]  Thus $a+b = 0$ iff $\delta_{\mathrm L} = \delta_{\mathrm R} = \delta_{\mathrm L}' = \delta_{\mathrm R}' = 0$, which is equivalent to the condition $\lambda+\alpha \notin U_\fkn \lambda \cup \lambda U_\fkn$, and $a+b= 2$ iff $\delta_{\mathrm L} = \delta_{\mathrm R} = \delta_{\mathrm L}' = \delta_{\mathrm R}'= 1$, which is equivalent to the condition $\lambda+\alpha \in U_\fkm\lambda \cap \lambda U_\fkm$.
\end{proof}

We now prove the proposition.

\begin{proof}[Proof of Proposition \ref{codim1-prop}]
(1) follows immediately from the lemma, and (2) comes from our result in the irreducible case.  Explicitly, if $\chi$ is irreducible then it restricts to a sum of either 1 or $q$ irreducible characters; in the first case $\chi \downarrow U_\fkm$ is a supercharacter, and in the second $\chi\downarrow U_\fkm$ is the sum of either 1 or $q$ supercharacters.

To prove (3), choose $\alpha \in \fkn^*$ such that $\wt \gamma = \chi_\fkn^\alpha$.  
Note from the proof of Proposition \ref{lift-prop} that $\alpha \in \fkm^\perp$, and since $\gamma$ is nontrivial, $\alpha \neq 0$.  Also, observe that since $\fkm$ is an ideal of codimension one and $\fkn$ is nilpotent, $\fkn^2 \subset \fkm$.  Consequently $U_\fkn \alpha U_\fkn = \{ \alpha\}$, and by (\ref{formula}) it follows that $\chi\otimes\wt \gamma = \chi_\fkn^{\lambda+\alpha}$.  Now suppose $\chi \otimes \wt \gamma \neq \chi$, so that $\lambda+\alpha \notin U_\fkn \lambda U_\fkn$.  Then clearly $\lambda +\alpha \notin U_\fkn\lambda\cup \lambda U_\fkn$, so by the lemma $a+b = 0$ and $\chi \downarrow U_\fkm$ is a supercharacter of $U_\fkm$.
 \end{proof}
 
\def\sgn{\mathrm{sgn}}

\begin{example} \emph{Alternating Pattern Groups.}  Given a poset $\cP$ on $[n]$, let $\cP^\cov$ denote the subset of covers in $\cP$; i.e., elements $(i,k) \in \cP$ for which no $j$ exists with $(i,j),(j,k) \in \cP$.  Define a map $\sgn : U_\cP \rightarrow \FF_q^+$ by \[\sgn(g) = \sum_{(i,j) \in \cP^\cov} g_{ij},\qquad\text{for }g\in U_\cP.\] One can check that $\sgn$ is a homomorphism, and that $\theta \circ t\hspace{0.5mm} \sgn$ defines a 1-dimensional representation for all $t\in \FF_q$.  Define the \emph{alternating pattern subgroup} \[A_\cP = \left\{ g\in U_\cP : \sum_{(i,j) \in \cP^\cov} g_{ij} = 0\right\}\] as the kernel of $\sgn$.  The group $A_\cP\subset U_\cP$ is an algebra subgroup of codimension one, so we can apply the preceding proposition and lemma.   Any $\alpha$ in the sense of Lemma \ref{5.1} is a multiple of $\alpha = \sum_{(i,j) \in \cP^\cov} e_{ij}^* \in \fkn_\cP^*$, and $\lambda +\alpha \notin U_\cP \lambda \cup \lambda U_\cP$ for all $\lambda \in \fkn_\cP^*$.  To see this, let $(i,j) \in \cP^\cov$ with $i$ minimal.  Then $ge_{ij} = e_{ij}$ for all $g \in U_\cP$ so 
\[ (g \lambda)_{ij} = \lambda_{ij} \neq \lambda_{ij} +1 = (\lambda+\alpha)_{ij},\qquad\text{for all }g\in U_\cP.\]  Therefore $\lambda+\alpha \notin U_\cP \lambda$, and a similar argument using $(i,j) \in \cP^\cov$ with $j$ maximal shows that $\lambda+\alpha \notin \lambda U_\cP$.

Thus, in analogy with the alternating subgroup of the symmetric group, every supercharacter of $U_\cP$ restricts to a supercharacter of $A_\cP$ by Lemma \ref{5.1}, and every supercharacter of $A_\cP$ arises in this way.  In addition, it follows from (\ref{formula}) that two supercharacters $\chi, \psi$ of $U_\cP$ have the same restriction  to $A_\cP$ if and only if $\chi = \psi \otimes \(\theta \circ t\hspace{0.5mm}\sgn\)$ for some $t \in \FF_q$.  More descriptively, we recall that two supercharacters are equal if and only if they are indexed by linear functionals in the same two-sided orbit.  If we let $\fka_\cP = \{ X \in \fkn_\cP : 1+X \in A_\cP\}$, then $\chi^\lambda \downarrow A_\cP = \chi^{\lambda \downarrow \fka _\cP}$ for $\lambda \in \fkn_\cP^*$.  Thus $\chi,\psi$ have the same restriction if and only if they can be  indexed by functionals in $\fkn_\cP^*$ which differ by a multiple of $\alpha$, since $\FF_q\spanning\{\alpha\}$ is the kernel of the restriction map $\fkn_\cP^* \rightarrow \fka_\cP^*$.  Since $\theta \circ t\hspace{0.5mm}\sgn = \chi^{t\alpha}$, and since $\chi^\lambda \otimes \chi^{t\alpha} = \chi^{\lambda + t\alpha}$ (which follows from (\ref{formula}) and the fact that $U_\cP \alpha U_\cP = \{\alpha\}$), our claim follows.

Let $A_n(q)$ denote the alternating pattern subgroup of $U_n(q)$.  Using the preceding observations, we can produce a formula for the number of supercharacters of $A_n(q)$.  We first require some definitions and a lemma.  Given a positive integer $n$, let
\[ \ba \mathscr{F}_n(q) &=  \{ \lambda \in \sP_n(q) : \lambda \text{ has a nonzero entry in the $i$th row or $i$th column for all $1\leq i \leq n$}\}, \\
F_n(q) & = |\mathscr{F}_n(q)|.\ea\] Here by convention $\mathscr{F}_0(q) = \{\varnothing\}$ and $F_0(q) = 1$.  Recall from Section \ref{sc-U_n-sect} the correspondence between elements of $\sP_n(q)$ and $\FF_q$-labeled set partitions of $[n]$.   The set $\mathscr{F}_n(q)$ corresponds to the subset of \emph{feasible} $\FF_q$-labeled set partitions of $[n]$, which are set partitions with no parts containing just one element.  The numbers $F_n(2)$ define the sequence \htmladdnormallink{A000296} {http://oeis.org/A000296} in \cite{OEIS}.  A survey of the combinatorial interpretations of $F_n(2)$ appear in \cite{Be99}, where $F_n(2)$ is the sequence $V_n$.  The following lemma gives a formula for $F_n(q)$ involving the Bell numbers.

\begin{lemma} \label{fpdef} The number $F_n(q)$ is the binomial transform of $B_n(q)$; i.e.,
  \[  F_n(q) = \displaystyle\sum_{k=0}^n (-1)^{k} \binom{n}{k}  B_{n-k}(q),\qquad\text{for }n\geq 0.\]
\end{lemma}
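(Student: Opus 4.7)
The plan is to prove the identity by first establishing a simpler forward relation and then inverting it via the standard binomial inversion formula. Specifically, I would show that
\[ B_n(q) = \sum_{k=0}^n \binom{n}{k} F_{n-k}(q), \qquad \text{for } n \geq 0, \]
and then apply binomial inversion to obtain the stated formula for $F_n(q)$.

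To prove the forward relation, I would use the combinatorial interpretation of $B_n(q)$ and $F_n(q)$ as counting $\FF_q$-labeled set partitions of $[n]$ (respectively without any restriction and with no singleton parts), as discussed in Section \ref{sc-U_n-sect}. Given an arbitrary $\FF_q$-labeled set partition $\lambda$ of $[n]$, let $T \subset [n]$ denote the set of elements which form singleton parts of $\lambda$, and let $k = |T|$. The remaining elements $[n] \setminus T$ form an $\FF_q$-labeled set partition with no singleton parts; since the underlying set $[n] \setminus T$ has $n-k$ elements and the labeling is unaffected by the embedding, this restriction is canonically in bijection with an element of $\mathscr{F}_{n-k}(q)$ (the feasibility condition is precisely the absence of singletons, and $\mathscr{F}_{n-k}(q)$ does not depend on the particular $(n-k)$-element underlying set up to relabeling). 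Conversely, from a subset $T \subset [n]$ of size $k$ and an element of $\mathscr{F}_{n-k}(q)$, we reconstruct $\lambda$ by adjoining the singletons $\{i\}$ for $i \in T$. Summing over $k$ gives the claimed identity.

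Having established this, the identity for $F_n(q)$ follows from the standard binomial inversion principle: if $a_n = \sum_{k=0}^n \binom{n}{k} b_{n-k}$ for all $n \geq 0$, then $b_n = \sum_{k=0}^n (-1)^k \binom{n}{k} a_{n-k}$. Applying this with $a_n = B_n(q)$ and $b_n = F_n(q)$ yields the formula in the lemma.

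The only subtle point is ensuring that the bijection between $\FF_q$-labeled set partitions with singleton part set $T$ and pairs (subset $T$, feasible labeled partition of $[n] \setminus T$) is truly natural; the labeling structure attaches $\FF_q^\times$ values only to pairs of consecutive integers within a given block, so restricting to $[n] \setminus T$ preserves all labels unchanged, and the base case $F_0(q) = 1 = B_0(q)$ handles the empty-partition convention. No calculation here is difficult; the main thing is to state the bijection carefully and invoke binomial inversion.
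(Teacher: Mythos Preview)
Your proposal is correct, and the argument is sound throughout; the bijection classifying $\FF_q$-labeled set partitions by their set of singletons is exactly as you describe, and binomial inversion then yields the stated formula.

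The paper takes a different but closely related route. Rather than proving the forward identity and inverting, it argues by direct inclusion--exclusion in the style of its earlier Lemma~\ref{count2}: writing $F_n(q) = B_n(q) - |\cT|$ where $\cT$ is the set of $\lambda \in \sP_n(q)$ having all zeros in the $i$th row and column for at least one $i$, it defines $\cT_S$ for each nonempty $S \subseteq [n]$ as the set of $\lambda$ with zero $i$th row and column for every $i \in S$, observes $|\cT_S| = B_{n-|S|}(q)$ and $\cT_R \cap \cT_S = \cT_{R \cup S}$, and applies inclusion--exclusion. The two arguments are dual: inclusion--exclusion over the singleton positions and binomial inversion of the forward count are two faces of the same M\"obius computation on the Boolean lattice. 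Your version has the virtue of isolating the combinatorial bijection cleanly before any alternating-sum manipulation; the paper's version emphasizes the formal parallel with Lemma~\ref{count2}.
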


\begin{proof} The proof is the same as that of Lemma \ref{count2}.  The statement holds if $n=0$; assume $n>0$ so that
$F_n(q) = B_n(q) - |\cT|$, where $\cT$ denotes the set  of elements in $\sP_n(q)$ which have all zeros in the $i$th row and column for at least one $i$.  Given a nonempty subset $S \subseteq \{1,\dots,n\}$, let $\cT_{S} \subseteq \cT$ denote the set of $\lambda \in \sP_n(q)$ with $\lambda_{ix} = \lambda_{xi} = 0$ for all $i \in S$ and $1\leq x \leq n$.  For any such $S$ we have $|\cT_{S}| = B_{n-|S|}(q)$, since deleting the rows and columns with coordinates in $S$ gives a bijection $\cT_S \rightarrow \sP_{n-|S|}(q)$.   Furthermore,  if $R,S\subseteq [[n]]$ are two nonempty subsets, then one sees directly that $\cT_R \cap \cT_S = \cT_{R\cup S}$.  Since $\cT = \bigcup_{i=1}^n \cT_{\{i\}}$, the desired result follows from the inclusion-exclusion principle.
%\[\begin{aligned}  |\cT| & =  
%\sum_{i_1=1}^n | \cT_{\{i_1\}}|- \sum_{i_1=1}^{n-1} \sum_{i_2=i_1+1}^n  
%|\cT_{\{i_1,i_2\}}| +
%\sum_{i_1=1}^{n-2} \sum_{i_2=i_1+1}^{n-1} \sum_{i_3=i_2+1}^n |\cT_{\{i_1,i_2,i_3\}} | - \dots \\ &
%\quad  -(-1)^{n-1} \sum_{i_1=1}^{2} \sum_{i_2=i_1+1}^{3}  
%\dots\sum_{i_{n-1}=i_{n-2}+1}^{n} |\cT_{\{i_1,i_2,\dots,i_{n-1}\}}|  - (-1)^{n} |\cT_{\{1,2,\dots,n\}}|
%\\ &
%= \binom{n}{1} B(n-1,q) - \binom{n}{2} B(n-2,q) + \dots{} - \binom{n}{n-1}(-1)^{n-1}  B(1,q) - (-1)^{n} B(0,q) \\& 
%= -\sum_{k=1}^n(-1)^k  \binom{n}{k} B(n-k,q)
%.\end{aligned}
%\]  Hence $F(n,q) = B(n,q) + \sum_{k=1}^n (-1)^k  \binom{n}{k} B(n-k,q) = \sum_{k=0}^n (-1)^k  \binom{n}{k} B(n-k,q)$.
\end{proof}

We now have an explicit formula.

\begin{proposition} The number of supercharacters of $A_{n+1}(q)$ is
\[ \frac{1}{q}B_{n+1}(q) + \frac{q-1}{q}F_n(q) =  \sum_{k=0}^n \frac{(q-1)^k + (-1)^k(q-1)}{q} \binom{n}{k} B_{n-k}(q),\qquad\text{for $n\geq 0$.}\]
\end{proposition}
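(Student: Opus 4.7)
The plan is to apply Burnside's lemma to the action of $\FF_q$ on the set of supercharacters of $U_{n+1}(q)$ defined by $t \cdot \chi = \chi \otimes \chi^{t\alpha}$, where $\alpha = \sum_{i=1}^n e_{i,i+1}^*$. The preceding discussion shows that two supercharacters of $U_{n+1}(q)$ restrict to the same supercharacter of $A_{n+1}(q)$ if and only if they lie in the same $\FF_q$-orbit under this action, and that every supercharacter of $A_{n+1}(q)$ arises as such a restriction. Hence the number of supercharacters of $A_{n+1}(q)$ equals
\[
\frac{1}{q}\sum_{t \in \FF_q} |\mathrm{Fix}(t)|,
\]
where $\mathrm{Fix}(t) = \{\lambda \in \sP_{n+1}^*(q) : \lambda + t\alpha \in U_{n+1}\lambda U_{n+1}\}$. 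The $t=0$ term contributes $|\mathrm{Fix}(0)| = B_{n+1}(q)$.

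Next I would observe that for any $s \in \FF_q^\times$ the scaling $\lambda \mapsto s\lambda$ is a bijection of $\sP_{n+1}^*(q)$ onto itself carrying two-sided $U_{n+1}$-orbits to two-sided $U_{n+1}$-orbits (since $g(s\lambda)h = s(g\lambda h)$). It therefore takes $\mathrm{Fix}(t)$ bijectively onto $\mathrm{Fix}(st)$, so $|\mathrm{Fix}(t)|$ is constant on $\FF_q^\times$, and it suffices to prove that this common value equals $F_n(q)$.

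For the central step I propose to construct a bijection $\Phi : \mathscr{F}_n(q) \to \mathrm{Fix}(1)$. Given a feasible $\pi \in \mathscr{F}_n(q)$ with arcs $(i_s,j_s)$ and labels $c_s \in \FF_q^\times$, set $\Phi(\pi) = \lambda_\pi \in \sP_{n+1}^*(q)$ to be the functional with entries $c_s$ at the shifted positions $(i_s,j_s+1)$ and zero elsewhere. Using the explicit formulas for the left and right actions of $U_{n+1}$ on $\fkn_{n+1}^*$ from \cite{DI06}, I would verify that the feasibility of $\pi$ (the fact that every element of $[n]$ is either a left or a right endpoint of some arc of $\pi$) forces $\lambda_\pi + \alpha$ to lie in the two-sided orbit $U_{n+1}\lambda_\pi U_{n+1}$, so that $\lambda_\pi \in \mathrm{Fix}(1)$. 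Conversely, given $\lambda \in \mathrm{Fix}(1)$ with arcs $(i_s,k_s)$, I would show that fixedness forces $k_s > i_s+1$ for every $s$ (otherwise $\alpha$ introduces a contribution at a position no element of $U_{n+1}\lambda U_{n+1}$ can match) together with an analogous ``coverage'' condition that every $i \in [n]$ is either some $i_s$ or satisfies $i+1 = k_s$ for some $s$; the partition $\pi$ with arcs $(i_s,k_s-1)$ then lies in $\mathscr{F}_n(q)$ and satisfies $\Phi(\pi) = \lambda$.

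Assembling these pieces yields the closed form $\frac{1}{q}B_{n+1}(q) + \frac{q-1}{q}F_n(q)$; the equivalent sum expression follows by substituting the recurrence (\ref{B_q-def}) for $B_{n+1}(q)$ and the identity from Lemma \ref{fpdef} for $F_n(q)$ and collecting coefficients of $\binom{n}{k}B_{n-k}(q)$. The hardest part of the argument is the orbit computation underpinning $\Phi$: when several arcs of $\pi$ produce coincident shifted endpoints in $\lambda_\pi$ (for instance the part $\{1,2,\dots,k\}$, which gets sent to the partition $\{1,3,5,\dots\} \cup \{2,4,\dots\}$), one must verify that the interacting contributions to $U_{n+1}\lambda_\pi U_{n+1}$ can be simultaneously tuned to reproduce $\lambda_\pi + \alpha$ despite the nonlinear cross-terms that appear in the orbit parametrization whenever arcs of $\lambda_\pi$ share an endpoint.
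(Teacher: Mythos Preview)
Your approach is correct and lands on exactly the same bijection as the paper, just packaged differently.  The paper counts orbits directly (every orbit has size $1$ or $q$), while you use Burnside; your scaling argument showing $|\mathrm{Fix}(t)|$ is constant on $\FF_q^\times$ is a clean substitute for the paper's implicit observation that the fixed-point condition does not depend on which nonzero $t$ you pick.  Your map $\Phi$ is precisely the inverse of the paper's bijection, which takes $\lambda\in\mathrm{Fix}(1)$ and deletes the first column and last row of the associated matrix to obtain an element of $\mathscr{F}_n(q)$.

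Where the two arguments diverge is in how the orbit condition is handled, and this is where you are making your life harder than necessary.  The paper does \emph{not} verify $\lambda_\pi+\alpha\in U_{n+1}\lambda_\pi U_{n+1}$ by constructing explicit $g,h$; instead it first proves (left as a ``one can check'') the clean combinatorial criterion
\[
\lambda+\alpha\in U_{n+1}\lambda U_{n+1}\quad\Longleftrightarrow\quad\text{for every }i,\ \lambda\text{ has a nonzero entry strictly right of or strictly above }(i,i+1).
\]
Your ``coverage condition'' (every $i\in[n]$ is some $i_s$ or has $i+1=k_s$) together with your ``no short arcs'' condition ($k_s>i_s+1$) is exactly this criterion rephrased for $\lambda_\pi$, so once the criterion is established the bijection is immediate and your worry about nonlinear cross-terms evaporates.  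The criterion itself follows because for $\lambda\in\sP_{n+1}^*(q)$ the superdiagonal entry $\mu_{i,i+1}$ of $\mu\in U\lambda U$ can be moved off $\lambda_{i,i+1}$ only by a row operation using a nonzero $\lambda_{i,j}$ with $j>i+1$ or a column operation using a nonzero $\lambda_{k,i+1}$ with $k<i$, and the at-most-one-entry-per-row-and-column constraint keeps these operations from interfering across different values of $i$.  Proving this criterion first, rather than chasing orbit elements directly, is the simplification you are missing.
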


\def\sF{\mathscr{F}}

\begin{proof}
Retaining the notation above, we have $\chi^\lambda \otimes \theta \circ t\hspace{0.5mm}\sgn = \chi^\lambda$ if and only if $\lambda +\alpha \in U_n(q) \lambda U_n(q)$, where $\alpha = \sum_{i=1}^{n-1} e_{i,i+1}^* \in \fkn_n^*(q)$.  One can check that this latter condition holds for $\lambda \in \sP^*_n(q)$ if and only if $\lambda$ has a nonzero entry strictly to the right of or strictly above $(i,i+1)$ for all $i$; call the set of such functionals $\wt\sF_{n}(q)$.  The cardinality of $\wt\sF_n(q)$ is then $F_{n-1}(q)$, since if we identify elements of $\wt \sF_n(q)$ as matrices in $\fkn_n(q)$, then deleting the first column and last row defines a bijection between $\wt\sF_n(q)$ and $\sF_{n-1}(q)$.  Each supercharacter $\chi^\lambda$ for $\lambda \in \wt\sF_n(q)$ restricts to a distinct supercharacter of $A_n(q)$.  Conversely, if $\lambda \in \sP^*_n(q) - \wt\sF_n(q)$ then the $q$ supercharacters $\chi^\lambda \otimes \theta \circ t\hspace{0.5mm}\sgn$ for $t\in \FF_q$ all have the same restriction.  Thus the number of supercharacters of $A_n(q)$ is \[\frac{1}{q} |\sP^*_n(q) - \wt\sF_n(q)| + |\wt\sF_n(q)| = \frac{1}{q} |\sP^*_n(q)| + \frac{q-1}{q} |\wt\sF_n(q)| = \frac{1}{q}B_n(q) + \frac{q-1}{q} F_{n-1}(q)\] and the second formula follows from the preceding lemma.
\end{proof}

There is a natural indexing set for the supercharacters of $A_n(q)$ given by  all $\FF_q$-labeled set partitions $\lambda$ of $[n]$ satisfying the following condition: if  the numbers $j$ and $j+1$ belong to the same part of $\lambda$, then for some $i<j$, $i$ is the largest element of its part in $\lambda$ and $i+1$ is the smallest element of its part in $\lambda$.  These set partitions correspond to the subset
\[ \bigl\{ \lambda \in \sP_n^*(q) : \lambda_{j,j+1} \neq 0 \text{ implies } \exists i\text{ with }1\leq i<j\text{ such that }\lambda_{i,x} = \lambda_{x,i+1}=0\text{ for all }x\bigr\}.\]
This follows by choosing an appropriate set of representatives of the equivalence classes in $\sP_n^*(q)$ under the relation $\sim$ defined by 
\[ \barr{c} \lambda \sim \mu\qquad \text{if and only if}\qquad  \lambda + t\(\sum_{i=1}^{n-1} e_{i,i+1}^*\) \in U_n(q) \mu U_n(q)\text{ for some $t \in \FF_q$}.
\earr\]   As noted in the discussion above, these equivalence classes parametrize the distinct restrictions of supercharacters of $U_n(q)$ to $A_n(q)$.

%\begin{remark}
%Since $A_n(q)$ is a supernormal, the group's superclasses  are indexed by the set of $\lambda \in \sP_n(q)$ with $\sgn(\lambda) = 0$.  On the other hand, the supercharacters of $A_n(q)$ are indexed by the set equivalence classes in $\sP_n(q)$ with respect to the relation $\sim$ defined by settting $\lambda \sim \mu$ if and only if $\lambda - \mu = t \(e_{12}^* + e_{23}^* + \dots + e_{n-1,n}^*\)$ for some $t \in \FF_q$.   When $q=2$, the former indexing set corresponds to the family of set partitions of $[n]$ for which an even number of the pairs $i,i+1$ belong the same part for $1\leq i < n$; the latter indexing corresponds to  
%\end{remark}

\end{example}

\section{Supercharacters of Abelian Semidirect Products}\label{abelian-sect}

Suppose $G$ is a finite group given by a semidirect product of the form $G = H\ltimes A$ where $A$ is normal and abelian.  Then $H$ acts on the set of irreducible character of $A$ by conjugation:
\[ h\cdot \tau(a) = \tau(h^{-1}ah),\qquad\text{for }h\in H,\ a \in A,\ \tau \in \Irr(A).\]  Mackey's ``method of little groups'' bijectively assigns to each irreducible character of $G$ a pair consisting of an $H$-orbit of $\Irr(A)$ and an irreducible character of the corresponding stabilizer subgroup in $H$.  The goal of this section is to provide a supercharacter analogue of this result for algebra groups given by semidirect products with an abelian supernormal subgroup.

In order to get some idea of what such an analogue might look like, let us describe how the irreducible characters of $G$ are parametrized more explicitly.  
Fix a set $\cR$ of representatives of the distinct $H$-orbits of irreducible characters of $A$, and for each $\tau \in \cR$ let $S_\tau$ denote its stabilizer subgroup in $H$.  Then each irreducible character of $G$ corresponds to a unique pair $(\cO_\tau, \psi)$ where $\cO_\tau$ is the $H$-orbit of some $\tau \in \cR$ and $\psi \in \Irr(S_\tau)$.  In particular, we have a bijection
\be \label{clifford-orig} \barr{ccc}
\Irr(G) & \to & \biggl \{ ( \cO_\tau,  \psi) : %\ba &
\text{$\tau \in \cR$ and $\psi \in \Irr(S_\tau)$}
\biggr\} \\
\chi & \mapsto & (\cO_\tau,\psi)
\earr \ee where $\chi$ is given by the explicit formula 
\[\chi = \Ind_{AS_\tau}^G \(\tilde \psi \otimes \tilde \tau\). \] Here $\tilde \psi$ and $\tilde \tau$ are the characters of $A S_\tau$ defined by $\tilde \psi(as) = \psi(s)$ and  $\tilde \tau(as) = \tau(a)$ for $a \in A$, $s \in S_\tau$.  Since $|\cO_\tau| = \frac{|G|}{|AS_\tau|}$, we can write this formula equivalently as
\be\label{formula-orig} m_\chi \chi = \Ind_{AS_\tau}^G \( m_\psi |\cO_\tau| \tilde \psi \otimes \tilde \tau\),\quad\text{where }m_\psi = \psi(1) = \frac{\psi(1)}{\langle \psi,\psi\rangle_{G}},\ m_\chi = \chi(1) = \frac{\chi(1)}{\langle \chi,\chi\rangle_{G}}.\ee  This version more closely mirrors its supercharacter analogue (\ref{clifford-bijection}) described below.

Theorem \ref{main-thm} describes a similar bijection for the supercharacters of an algebra group of the form $U_\fkn = U_\fkh \ltimes U_\fka$ where $U_\fka$ is supernormal and $\fka^2=0$.  In this case we again have a natural action of the subgroup $U_\fkh$ on the supercharacters of the abelian algebra group $U_\fka$, but this time this action is two-sided instead of by conjugation.  As before, the supercharacters of $U_\fkn$ are parametrized by the resulting $U_\fkh$-orbits and some additional data related to the characters of the corresponding stabilizer subgroups.  Unlike the irreducible case, however, this additional data takes the form of an equivalence class of supercharacters rather than a single supercharacter.

For the duration of this section, let $\fkn, \fkh, \fka$ be nilpotent $\FF_q$-algebras such that $U_\fkn = U_{\fk h} \ltimes U_{\fk a}$ is a semidirect product of algebra groups with $U_\fka$ supernormal and $\fka^2=0$.  Observe that in this case $\fkn = \fk h \oplus \fk a$ as a vector space, $U_\fka$ is abelian, and $\fk a$ is a two-sided ideal.  If $U_\fka$ is a pattern group then $U_\fka$ is abelian if and only if $\fka^2=0$, but this does not hold for algebra groups in general.
Given any subspace $\fk m \subset \fk n$, let \[\fk m^\perp = \{ \gamma \in \fkn^* : \ker \gamma \supset \fk m\}.\]  
Then $\fkn^* = \fk h^\perp \oplus \fk a^\perp$ and we have natural vector space isomorphisms $\fkh^\perp \cong \fka^*$ and $\fka^\perp \cong \fkh^*$ given by restriction to $\fka$ and $\fkh$, respectively.

We lead up to our theorem classifying the supercharacters of $U_\fkn$ with two lemmas.  The first examines some of the special properties the structure of $U_\fkn$ imposes on the group's action on the dual space $\fkn^*$.  To state this result, we introduce the following notation.  Given $\alpha \in \fkh^\perp$, define 
\[ \fk l_\alpha = \{ H \in \fk h: H \fk a \subset \ker \alpha\},\qquad \fk r_\alpha = \{ H \in \fk h: \fk a H \subset \ker \alpha\},\qquad\text{and}\qquad \fk s_\alpha = \fk l_\alpha \cap \fk r_\alpha.\]  These sets are subalgebras of $\fkh$ as a consequence of the fact that $\fka$ is an ideal.  Let $L_\alpha = 1 + \fk l_\alpha$, $R_\alpha = 1 + \fk r_\alpha$, and $S_\alpha = 1 + \fk s_\alpha$ denote the corresponding algebra subgroups of $U_\fkh$.  In addition, let \[T_\alpha = \{ (g,h) \in U_{\fk h}\times U_{\fk h} : g\alpha h^{-1} = \alpha \}\] denote the stabilizer subgroup of $\alpha$ with respect to the two-sided action of $U_\fkh$.   We now have our first lemma.

\begin{lemma}\label{lemma6.3}  For any $\alpha \in \fkh^\perp$, the following hold:
\begin{enumerate}
%\item[(1)] $U_\fkh \eta U_\fkh \subset \fkh^\perp$ for all $\eta \in \fkh^\perp$, and $U_\fkh \gamma U_\fkh \subset \fka^\perp$ and $U_\fka \gamma U_\fka = \{\gamma\}$ for all $\gamma \in \fka^\perp$.

\item[(1)] The groups $L_\alpha$ and $R_\alpha$ are the left and right stabilizers of $\alpha$ in $U_\fkh$, respectively.

\item[(2)] We have 
\[ \ba\(U_{\fk a} \alpha - \alpha\) &= \fk r_\alpha^\perp \cap \fk a^\perp, \\ 
\(\alpha U_{\fk a}- \alpha\) &= \fk l_\alpha^\perp\cap  \fk a^\perp,\ea\qquad\text{and}\qquad \(U_{\fk a} \alpha U_\fka   - \alpha\) = (U_{\fk a} \alpha - \alpha) + (\alpha U_{\fk a}- \alpha) = \fk s_\alpha^\perp \cap \fk a^\perp.\]  Consequently $|U_\fka \alpha U_\fka||S_\alpha| = |U_\fkh|$.
\item[(3)] %For each $\alpha \in \fk h^\perp$, 
For all $(g,h) \in T_\alpha$ and $X \in \fk s_\alpha$, we have $gX h^{-1} \in \fk s_\alpha$.  %Hence $T_\alpha$ acts on $\fk s_\alpha$ and $\fk s_\alpha^*$ by 
% \[ \ba (g,h)\cdot X &= gXh^{-1}, \\ (g,h)\cdot \eta(X) &= \eta(g^{-1} X h),\ea \qquad\text{for }(g,h) \in T_\alpha,\ X \in \fk s_\alpha,\ \eta \in \fk s_\alpha^*,\] and the orbits of these actions consist of unions of two-sided $S_\alpha$-orbits.  

%Consequently we have an equivalence relation $\sim_\alpha$ on the set of supercharacters of $S_\alpha$ defined by \[ \chi_{\fk s_\alpha}^{\mu} \sim_\alpha \chi_{\fk s_\alpha}^{\nu}\text{ if and only if }\mu \in T_\alpha\cdot \nu\text{ for $\mu,\nu\in \fk d _\alpha^*$.}\]

\end{enumerate} 
\end{lemma}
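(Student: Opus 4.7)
The plan is to prove all three parts by unwinding the group actions explicitly, exploiting heavily the two simplifications offered by the hypotheses: that $\fka^2 = 0$ and that $\alpha$ vanishes on $\fkh$. The key computational observation, valid for any $g = 1 + A \in U_\fka$, is that $g^{-1} = 1 - A$ (since $\fka^2 = 0$), and so for $X \in \fkn$ we have
\[ (g\alpha)(X) - \alpha(X) = -\alpha(AX), \qquad (\alpha g)(X) - \alpha(X) = -\alpha(XA). \]
Moreover, since $\fka$ is an ideal and $\fka^2 = 0$, for $X = H + A' \in \fkh \oplus \fka$ the product $AX$ collapses to $AH \in \fka$. Part (1) is essentially a direct unwinding: for $g \in U_\fkh$, $g^{-1} - 1 \in \fkh$, so $(g^{-1}-1)X$ lies in $\fkh$ when $X \in \fkh$ (forcing $\alpha$ to vanish there automatically) and in $\fka$ when $X \in \fka$. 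Hence $g\alpha = \alpha$ iff $(g^{-1}-1)\fka \subset \ker\alpha$ iff $g^{-1}-1 \in \fkl_\alpha$, which is equivalent to $g \in L_\alpha$. The right-stabilizer statement is symmetric.

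For part (2), I would define $\phi : \fka \to \fkn^*$ by $\phi(A)(X) = -\alpha(AX)$ so that $\phi(A) = (1+A)\alpha - \alpha$; by the observation above, $\phi(\fka) = U_\fka \alpha - \alpha$. Since $\phi(A)(X) = -\alpha(AH)$ depends only on the $\fkh$-component of $X$, we get $\phi(A) \in \fka^\perp$ immediately, and $\phi(A)(H) = -\alpha(AH) = 0$ whenever $H \in \fkr_\alpha$, giving the inclusion $U_\fka\alpha - \alpha \subseteq \fkr_\alpha^\perp \cap \fka^\perp$. To upgrade to equality, I will count dimensions using the bilinear pairing
\[ \fka \times \fkh \to \FF_q, \qquad (A,H) \mapsto \alpha(AH), \]
whose right radical is $\fkr_\alpha$ by definition. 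The standard equality of corank dimensions on both sides pairs up neatly with $\dim(\fkr_\alpha^\perp \cap \fka^\perp) = \dim\fkh - \dim\fkr_\alpha$ and with $\dim\ker\phi$. The left-action formula is symmetric. For the two-sided orbit, I will compute $g\alpha h$ for $g = 1+A$, $h = 1+B$ directly: because $AXB$ always lands in $\fka^2 = 0$, the cross-term vanishes, giving $g\alpha h - \alpha = (g\alpha - \alpha) + (\alpha h - \alpha)$. The identification of this sum with $\fks_\alpha^\perp \cap \fka^\perp$ then reduces to the standard linear algebra fact $(\fkl_\alpha \cap \fkr_\alpha)^\perp = \fkl_\alpha^\perp + \fkr_\alpha^\perp$ inside $\fkh^*$, applied to the ambient subspace $\fka^\perp \cong \fkh^*$. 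The order formula $|U_\fka\alpha U_\fka||S_\alpha| = |U_\fkh|$ is then immediate from $\dim(\fks_\alpha^\perp \cap \fka^\perp) = \dim\fkh - \dim\fks_\alpha$, since $\fka$ and $\fks_\alpha \subset \fkh$ intersect trivially.

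For part (3), I will exploit the defining identity $\alpha(Y) = \alpha(g^{-1}Yh)$ for $(g,h) \in T_\alpha$. Substituting $Y = (gXh^{-1})A$ for $A \in \fka$ gives
\[ \alpha\bigl((gXh^{-1})A\bigr) = \alpha\bigl(X(h^{-1}Ah)\bigr), \]
and the factor $h^{-1}Ah$ lies in $\fka$ because $\fka$ is an ideal, so $X \in \fkl_\alpha$ forces this to vanish; hence $gXh^{-1} \in \fkl_\alpha$. Substituting $Y = A(gXh^{-1})$ and using $X \in \fkr_\alpha$ yields $gXh^{-1} \in \fkr_\alpha$ symmetrically, so $gXh^{-1} \in \fks_\alpha$.

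The most delicate step I anticipate is the equality $U_\fka\alpha - \alpha = \fkr_\alpha^\perp \cap \fka^\perp$ in part (2), because one direction requires a nontrivial dimension count, and verifying that the three displayed descriptions of the two-sided orbit all coincide requires carefully tracking which ambient space each $\perp$ is taken in. Everything else is a direct computation that collapses because of the two structural hypotheses.
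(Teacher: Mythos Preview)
Your proof is correct and follows essentially the same line as the paper for parts (1) and (3): unwinding the action using $\alpha|_\fkh = 0$, and for (3) substituting $Y = (gXh^{-1})A$ and $Y = A(gXh^{-1})$ into the defining identity of $T_\alpha$ and using that $\fka$ is an ideal.

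For part (2) there is a minor but genuine difference in packaging. You obtain the equality $U_\fka\alpha - \alpha = \fkr_\alpha^\perp \cap \fka^\perp$ via the bilinear pairing $(A,H)\mapsto \alpha(AH)$ on $\fka\times\fkh$, invoking the standard fact that the left and right ranks agree: the image of your map $\phi$ has dimension $\dim\fka - \dim(\text{left radical}) = \dim\fkh - \dim\fkr_\alpha = \dim(\fkr_\alpha^\perp\cap\fka^\perp)$. The paper instead introduces auxiliary stabilizers $L_\alpha', R_\alpha'$ of $\alpha$ in $U_\fka$ and runs an inequality sandwich $|U_\fka\alpha| \le |\fkr_\alpha^\perp\cap\fka^\perp| = |\alpha U_\fkh| \le |(\fkl_\alpha')^\perp\cap\fkh^\perp| = |U_\fka\alpha|$, forcing equality throughout. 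Both arguments encode the same rank equality; yours is slightly more direct, while the paper's makes the parallel identity $\alpha U_\fkh - \alpha = (\fkl_\alpha')^\perp \cap \fkh^\perp$ explicit as a byproduct. The treatment of the two-sided orbit and the order formula is identical in both proofs.
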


\begin{proof}
%The subgroups $U_{\fk h}$ and $U_{\fk a}$ intersect only at the identity, so $\fk h \cap \fka = \{0\}$; since $|\fk h| |\fk a| = |\fk n| $ it follows that $\fkn = \fkh \oplus \fka$ as a vector space.  By Proposition \ref{supernormal-properties}, it follows that $\fka$ is a two-sided ideal, and the fact that the group $U_\fka$ is abelian implies that the algebra $\fka $ is abelian.

%$U_\fkh \eta U_\fkh \subset \fkh^\perp$ for all $\eta \in \fkh^\perp$ since $\fkh$ is a subalgebra; the other parts of (1) follow from Lemma \ref{supernormal-properties}.  

%If $\alpha \in \fkh^\perp$ and $\eta \in \fka^\perp$ and $g,h\in U_\fkh$, then for all $A \in \fka$ and $H \in \fkh$ we have $g^{-1}Hh^{-1} \in \fkh$ and $g^{-1}Ah^{-1} \in \fka$, whence $g\alpha h(H) = \alpha(g^{-1}Hh^{-1}) = 0$ and $g\eta h(A) = \eta(g^{-1}Ah^{-1}) = 0$.  Thus $U_\fkh \fkh^\perp U_\fkh = \fkh^\perp$ and $U_\fkh \fka^\perp U_\fkh = \fka^\perp$.  In addition, if $a,b \in U_\fka$ then for all $X \in \fkn$, the elements $(a^{-1}-1)X$, $X(b^{-1}-1)$, and  $(a^{-1}-1)X(b^{-1}-1)$ all belong to $\fka$, and it follows that $a\eta b(X) = \eta(X)$ 
%\[ a\eta b(X) = \eta(X) + \eta((a^{-1}-1)X + X(b^{-1}-1) + (a^{-1}-1)X(b^{-1}-1)) = \eta(X).\]  Thus $U_\fka \eta U_\fka = \{\eta\}$ for all $\eta \in \fka^\perp$.

Part (1) is quite similar to the first two parts of \cite[Lemma 4.2]{DI06}, and its proof follows largely the same argument.
For example, 
to see that $L_\alpha$ is the left stabilizer of $\alpha$ in $U_{\fk h}$, observe that if $H \in \fk h$, then $(1+H)\alpha = \alpha$ if and only if $(1+H)^{-1}\alpha =\alpha$ if and only if $\alpha(HX) = 0$ for all $X \in \fkn$.
%\[(1+H)\alpha = \alpha\quad \Leftrightarrow\quad (1+H)^{-1}\alpha - \alpha= 0\quad \Leftrightarrow\quad \alpha(HX) = 0\text{ for all $X \in \fkn.$}\]  
Since $H\fk h \subset \fk h \subset \ker \alpha$ by definition and since $\fkn = \fk h + \fk a$, it follows that $(1+H)\alpha = \alpha$ if and only if $H \fk a \subset \ker \alpha$, in which case $1+H \in L_\alpha$. The proof that $R_\alpha$ is the right stabilizer of $\alpha$ in $U_{\fk h}$ is identical.  

Define subalgebras $\fk l_\alpha' = \{ A \in \fk a : A \fk h \subset \ker \alpha\}$ and $\fk r_\alpha' = \{ A \in \fk a : \fk h A \subset \ker \alpha\}$, and let $L_\alpha'$ and $R_\alpha'$ be the corresponding algebra subgroups of $U_\fkn$.  By similar arguments, it follows that $L_\alpha'$ and $R_\alpha'$ are the left and right stabilizers  of $\alpha$ in $U_{\fk a}$.

To prove that $(U_{\fk a} \alpha - \alpha) = \fk r_\alpha^\perp \cap \fk a^\perp$, we first observe that $(U_{\fk a} \alpha - \alpha) \subset \fk r_\alpha^\perp \cap  \fk a^\perp$ since for $a \in U_{\fk a}$, $A \in \fka$, and $H \in \fk l_\alpha$, we have $(a^{-1}-1)A =0 \Rightarrow(a \alpha  - \alpha)(A)  =0$ and $(a^{-1}-1)H \in \ker \alpha  \Rightarrow (a \alpha  - \alpha)(H) =0$.
%\[ \ba A \in \fk a & \quad \Rightarrow\quad &(a^{-1}-1)A =0& \quad \Rightarrow\quad &(a \alpha  - \alpha)(A) = \alpha((a^{-1}-1)A) =0, \\
%H \in \fk l_\alpha & \quad \Rightarrow\quad &(a^{-1}-1)H \in \ker \alpha &\quad \Rightarrow\quad& (a \alpha  - \alpha)(H) = \alpha((a^{-1}-1)H) =0. \\
%\ea\]
Thus \[|U_{\fk a}\alpha - \alpha| = |U_\alpha \alpha| \leq |\fk r_\alpha^\perp \cap \fk a^\perp| = |\fk h| / |\fk r_\alpha| = |U_{\fk h}| / |R_\alpha| = |\alpha U_{\fk h}|.\]  
On the other hand, $(\alpha U_{\fk h} - \alpha) \subset (\fk l_\alpha')^\perp \cap \fk h^\perp$ since for $h \in U_{\fk h}$, $H \in \fkh$, and $A \in \fk l_\alpha'$, we have $H(h^{-1}-1) \in \fk h \subset \ker \alpha \Rightarrow (\alpha h - \alpha)(H) =0$ and $A(h^{-1}-1) \in \ker \alpha  \Rightarrow ( \alpha h  - \alpha)(A)  =0.$
%\[ \ba H \in \fk h & \quad \Rightarrow\quad &H(h^{-1}-1) \in \fk h \subset \ker \alpha & \quad \Rightarrow\quad &(\alpha h - \alpha)(H) = \alpha(H(h^{-1}-1)) =0, \\
%A \in \fk l'_\alpha & \quad \Rightarrow\quad &A(h^{-1}-1) \in \ker \alpha &\quad \Rightarrow\quad& ( \alpha h  - \alpha)(A) = \alpha(A(h^{-1}-1)) =0. 
%\ea\]  
Thus \[|\alpha U_{\fk h} - \alpha| = |\alpha U_{\fk h}| \leq |(\fk l'_\alpha)^\perp \cap \fk h^\perp| = |\fk a| / |\fk l'_\alpha| = |U_{\fk a}|/|L'_\alpha| = |U_{\fk a}\alpha|,\] so both of our inequalities become equalities throughout, and we obtain $|U_{\fk a}\alpha - \alpha| = |\fk r_\alpha^\perp \cap \fk a^\perp|$ and consequently $(U_{\fk a}\alpha - \alpha) = \fk r_\alpha^\perp \cap \fk a^\perp$.  The proof that $\alpha U_{\fk a}- \alpha = \fk l_\alpha^\perp\cap  \fk a^\perp$ is similar.  

It follows that 
\[(U_\fka \alpha - \alpha) + (\alpha U_\fka - \alpha) = \fk l_\alpha^\perp \cap \fka^\perp + \fk r_\alpha^\perp \cap \fka^\perp= (\fk l_\alpha^\perp + \fk r_\alpha^\perp) \cap \fka^\perp = (\fk l_\alpha \cap \fk r_\alpha )^\perp \cap \fka^\perp = \fk s_\alpha^\perp \cap \fka^\perp.\]
  But  observe that for all $a,b \in U_\fka$, $(a\alpha -\alpha) + (\alpha b - \alpha) = a\alpha b - \alpha$, since if $X \in \fkn$ then $(a^{-1}-1)X \in \fka \Rightarrow (a^{-1}-1)X (b^{-1}-1)   =0$, and so
\[ (a\alpha b-\alpha)(X) - (a\alpha -\alpha)(X) - (\alpha b - \alpha)(X) = \alpha((a^{-1}-1)X(b^{-1}-1)) =0.\]  Hence $(U_\fka \alpha U_\fka -\alpha) = (U_\fka \alpha - \alpha) + (\alpha U_\fka - \alpha) = \fk s_\alpha^\perp \cap \fka^\perp$.

Finally, suppose $(g,h) \in T_\alpha$ so that $g,h \in U_{\fk h}$ and $g\alpha h^{-1} = \alpha$.  Let $H \in \fk s_\alpha$, so that $H\fk a, \fk a H \subset \ker \alpha$.  Fix $A \in \fka$, and note that $g^{-1}Ag, h^{-1}Ah \in \fk a$  since $U_{\fk a}$ is supernormal.  Therefore $gHh^{-1} \in \fk s_\alpha$, since $\alpha(gH h^{-1}A) =  \alpha(H(h^{-1}Ah)) = 0$ and $\alpha(AgHh^{-1}) = \alpha((g^{-1} A g) H) =0$.  %Hence $T_\alpha$ acts on $\fk s_\alpha$ and in turn on $\fk s_\alpha^*$ as described.  Since $T_\alpha \supset S_\alpha \times S_\alpha$, the orbits of these actions consist of unions of two-sided $S_\alpha$-orbits.
 \end{proof}

Our next lemma uses the preceding results to say precisely when two functionals in $\fkn^*$ index the same supercharacter.  In order to state it, we observe that if $\alpha \in \fkh^\perp$, then it follows from (3) of the previous lemma that $T_\alpha$ acts on $\fk s_\alpha$ by $(g,h)\cdot X = gXh^{-1}$ and on its dual space $\fk s_\alpha^*$ by 
 \[ (g,h)\cdot \eta(X) = \eta(g^{-1} X h), \qquad\text{for }(g,h) \in T_\alpha,\ X \in \fk s_\alpha,\ \eta \in \fk s_\alpha^*.\] We now have the following.

\begin{lemma}\label{lemma6.4} 
%Let $\fkn$ be a nilpotent $\FF_q$-algebra with subalgebras $\fk h$ and $\fk a$ such that $U_\fkn = U_{\fk h} \ltimes U_{\fk a}$ is a semidirect product of algebra groups with $U_\fka$ supernormal and abelian.    Retaining the notation of the preceding lemma, we have 
Let $\alpha_1,\alpha_2,\alpha \in \fkh^\perp$ and $\eta_1,\eta_2,\eta \in \fka^\perp$.  Then the following hold:
\begin{enumerate} 
\item[(1)] $\chi_\fkn^{\alpha_1+\eta_1} = \chi_\fkn^{\alpha_2 +\eta_2}$ only if $\alpha_1 \in U_\fkh \alpha_2 U_\fk h$.

\item[(2)] $\chi_\fkn^{\alpha+\eta_1} = \chi_\fkn^{\alpha +\eta_2}$  if and only if $\eta_1\downarrow \fk s_\alpha \in T_\alpha\cdot(\eta_2\downarrow \fk s_\alpha)$. 

\item[(3)] If $\lambda = \alpha + \eta$, then $|U_\fkn \lambda U_\fkn| =|U_\fkh \alpha U_\fkh|  |U_\fka \alpha U_\fka| |T_\alpha\cdot (\eta\downarrow \fk s_\alpha)|$.
\end{enumerate}
\end{lemma}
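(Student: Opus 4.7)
The approach is to exploit the decompositions $\fkn = \fkh \oplus \fka$ and $\fkn^* = \fkh^\perp \oplus \fka^\perp$, together with the hypothesis $\fka^2 = 0$, to compute a typical translate $g\lambda h$ one ``component'' at a time. Every $g \in U_\fkn$ factors uniquely as $g = h_1 a_1$ with $h_1 \in U_\fkh$ and $a_1 \in U_\fka$, and writing $A_i' = a_i^{-1} - 1 \in \fka$, one has, for $\lambda = \alpha + \eta$ and $X \in \fkn$,
\[ (g\lambda h)(X) = (\alpha + \eta)\bigl(a_1^{-1} h_1^{-1} X h_2^{-1} a_2^{-1}\bigr). \]
The central computations are then the following two: (a) if $X \in \fka$, the element $h_1^{-1}Xh_2^{-1}$ is again in $\fka$ (since $\fka$ is a two-sided ideal), and the $a_i^{-1}$ contributions drop out by $\fka^2 = 0$, giving
\[ (g\lambda h)(X) = \alpha\bigl(h_1^{-1}Xh_2^{-1}\bigr) = (h_1 \alpha h_2)(X); \]
(b) if $X \in \fkh$, writing $Y = h_1^{-1}Xh_2^{-1} \in \fkh$ one has $g^{-1}Xh^{-1} = Y + A_1'Y + YA_2'$ (the $A_1'YA_2'$ term vanishes by $\fka^2=0$), and hence
\[ (g\lambda h)(X) = (h_1 \eta h_2)(X) + \alpha(A_1' h_1^{-1}X h_2^{-1}) + \alpha(h_1^{-1}X h_2^{-1} A_2'). \]

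Part (1) is immediate from (a): if $\chi_\fkn^{\alpha_1+\eta_1} = \chi_\fkn^{\alpha_2+\eta_2}$ then $\alpha_1 + \eta_1 = g(\alpha_2+\eta_2)h$ for some $g,h \in U_\fkn$, and restricting to $\fka$ gives $\alpha_1 \downarrow \fka = (h_1\alpha_2 h_2)\downarrow \fka$; since an element of $\fkh^\perp$ is determined by its restriction to $\fka$, this yields $\alpha_1 = h_1\alpha_2 h_2 \in U_\fkh \alpha_2 U_\fkh$. For Part (2), assume $\alpha+\eta_1 = g(\alpha+\eta_2)h$. Step (a) forces $h_1 \alpha h_2 = \alpha$, i.e.\ $(h_1, h_2^{-1}) \in T_\alpha$; substituting $B_1 = h_1 A_1' h_1^{-1}$, $B_2 = h_2^{-1}A_2' h_2 \in \fka$ into (b) and using $h_1\alpha h_2 = \alpha$ rewrites the $\fkh$-component of $g\lambda h$ as
\[ (g\lambda h)(X) = (h_1 \eta_2 h_2)(X) + \alpha(B_1 X) + \alpha(X B_2), \qquad X \in \fkh. \]
When $X \in \fk s_\alpha$, the last two summands vanish by definition of $\fk s_\alpha$, so $\eta_1 \downarrow \fk s_\alpha = (h_1\eta_2 h_2)\downarrow \fk s_\alpha = (h_1, h_2^{-1}) \cdot (\eta_2 \downarrow \fk s_\alpha)$, giving the forward direction of (2). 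For the converse, one must show that any $(h_1, h_2^{-1}) \in T_\alpha$ realizing $\eta_1\downarrow \fk s_\alpha = (h_1,h_2^{-1})\cdot(\eta_2\downarrow \fk s_\alpha)$ can be completed by a choice of $a_1, a_2 \in U_\fka$ to a genuine identity $\alpha+\eta_1 = (h_1 a_1)(\alpha+\eta_2)(h_2 a_2)$.

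The main technical point, and the step I expect to be the hardest, is a surjectivity claim: the linear map $\Phi : \fka \oplus \fka \to \fkh^*$ defined by $\Phi(B_1,B_2)(X) = \alpha(B_1 X) + \alpha(X B_2)$ has image exactly $\fk s_\alpha^\perp := \{f \in \fkh^* : f|_{\fk s_\alpha} = 0\}$. Containment is immediate from the definitions of $\fk l_\alpha, \fk r_\alpha$. For equality I would argue dimensionally: the image of $\Phi_1(B)(X) = \alpha(BX)$ lies in $\fk r_\alpha^\perp$ and has kernel $\{B \in \fka : B\fkh \subset \ker\alpha\}$, whose size can be read off from the proof of Lemma \ref{lemma6.3}(2) (where the analogous quantity $|L_\alpha'|$ was computed as $|\fka|\cdot|\fk r_\alpha|/|\fkh|$); this forces $\mathrm{Im}\,\Phi_1 = \fk r_\alpha^\perp$. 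The symmetric statement gives $\mathrm{Im}\,\Phi_2 = \fk l_\alpha^\perp$, and so $\mathrm{Im}\,\Phi = \fk r_\alpha^\perp + \fk l_\alpha^\perp = (\fk r_\alpha \cap \fk l_\alpha)^\perp = \fk s_\alpha^\perp$. Granted this, $(\eta_1 - h_1\eta_2 h_2)\downarrow \fkh$ lies in $\fk s_\alpha^\perp$ and hence equals some $\Phi(B_1,B_2)$; the corresponding $a_i = (1+h_i^{-1}B_i h_i)^{-1}$ (and its symmetric variant) give the required group element.

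Finally, for Part (3) I will count $|U_\fkn \lambda U_\fkn|$ by fibering the orbit over the map $(g\lambda h) \mapsto $ its $\fkh^\perp$-component. By (a) the image is exactly $U_\fkh \alpha U_\fkh$, and the $U_\fkh \times U_\fkh$-equivariance of the orbit guarantees that all fibers have the same cardinality. Over the basepoint $\alpha$, Part (2) and its proof show that the possible $\eta'$ partition into $|T_\alpha \cdot (\eta \downarrow \fk s_\alpha)|$ classes according to the value of $\eta'\downarrow \fk s_\alpha$, and within each class the freedom of choosing $(B_1,B_2)$ via $\Phi$ contributes exactly $|\fk s_\alpha^\perp| = |\fkh|/|\fk s_\alpha|$ possibilities. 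The final identity $|U_\fka \alpha U_\fka| = |\fkh|/|\fk s_\alpha|$ from Lemma \ref{lemma6.3}(2) then assembles the three factors into the formula $|U_\fkn \lambda U_\fkn| = |U_\fkh\alpha U_\fkh||U_\fka \alpha U_\fka||T_\alpha\cdot(\eta\downarrow \fk s_\alpha)|$.
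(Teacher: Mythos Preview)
Your proof is correct and follows essentially the same route as the paper's: decompose $g,h$ along $U_\fkn = U_\fkh U_\fka$, split $g\lambda h$ into its $\fkh^\perp$- and $\fka^\perp$-components, read off $(h_1,h_2^{-1})\in T_\alpha$ from the first component, and then identify the remaining freedom in the second component with $\fk s_\alpha^\perp$. The one remark worth making is that the ``main technical point'' you flag---the surjectivity $\mathrm{Im}\,\Phi = \fk s_\alpha^\perp$---is not a new difficulty but is exactly Lemma~\ref{lemma6.3}(2) transported along the canonical isomorphism $\fka^\perp \cong \fkh^*$: for $a=1+A,\ b=1+B \in U_\fka$ one has $(a\alpha b-\alpha)(X)=\alpha(AX)+\alpha(XB)$ since $\fka^2=0$, so your $\Phi$ coincides with $(a,b)\mapsto a\alpha b-\alpha$ restricted to $\fkh$, and Lemma~\ref{lemma6.3}(2) already says its image is $\fk s_\alpha^\perp\cap\fka^\perp$. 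Citing that directly would shorten your argument and avoid re-running the dimension count.
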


\begin{proof}
Write $\lambda_i = \alpha_i + \eta_i \in \fkn^*$.  Since $U_{\fkn} = U_{\fk h}U_{\fk a} = U_{\fk a}U_{\fk h}$, we have $\chi_\fkn^{\lambda_1} = \chi_\fkn^{\lambda_2}$ if and only if %$\lambda_1 \in U_\fkn \lambda_2 U_\fkn$, or equivalently iff 
$h_1a_1 \lambda_1 = \lambda_2 a_2h_2$ for some $a_i\in U_{\fk a},\ h_i \in U_{\fk h}$; by (2) of the previous lemma and Proposition \ref{supernormal-properties}, this is equivalent to 
\be \label{prev-eq} h_1a_1 \lambda_1 = \underbrace{h_1\alpha_1}_{\in \fkh^\perp} + \underbrace{h_1\((a_1\alpha_1-\alpha_1)+\eta_1\)}_{\in \fka^\perp} = \underbrace{\alpha_2h_2}_{\in \fkh^\perp} + \underbrace{\((\alpha_2a_2-\alpha_2)+\eta_2\)h_2}_{\in \fka^\perp} =\lambda_2a_2h_2.\ee  Since $\fkn^* = \fka^\perp \oplus \fkh^\perp$, this holds only if $h_1\alpha_1 =  \alpha_2 h_2$, in which case $\alpha_1 \in U_{\fk h} \alpha_2 U_{\fk h}$. This proves (1).

Now assume $\alpha_1 = \alpha_2 = \alpha$; then $\chi_\fkn^{\lambda_1}=\chi^{\lambda_2}_\fkn$ implies $h_1\alpha = \alpha h_2$ so $(h_1,h_2) \in T_\alpha$.  Using this fact, it follows, after acting on both sides of (\ref{prev-eq}) on the right with $h_2^{-1}$, that $\chi_\fkn^{\lambda_1} = \chi_\fkn^{\lambda_2}$ if and only if 
\[\alpha + (h_1a_1h_1^{-1} \alpha - \alpha) + h_1 \eta_1 h_2^{-1} = \alpha + (\alpha a_2 - \alpha) + \eta_2.\]  Since $U_\fka$ is normal, we can without loss of generality replace $h_1a_1h_1^{-1}$ with an arbitrary element of $U_\fka$.  Consequently, we have using (2) from the previous lemma that %after replacing $h_1a_1h_1^{-1}$ with $a_1 \in U_\fka$ and $(h_1,h_2) \in T_\alpha$ with $(g,h) \in T_\alpha$ 
\[\ba \chi_\fkn^{\alpha+\eta_1} = \chi_\fkn^{\alpha+\eta_2}& \quad\Leftrightarrow\quad 
(a_1 \alpha - \alpha) + g \eta_1 h^{-1} = (\alpha a_2 - \alpha) + \eta_2\text{ for some }a_i\in U_{\fk a},\ (g,h) \in T_\alpha \\
%& \quad\Leftrightarrow\quad  g \eta_1 h^{-1}  \in \eta_2 +(U_{\fk a}\alpha - \alpha) + (\alpha U_{\fka} -\alpha) \text{ for some }(g,h) \in T_\alpha\\
& \quad\Leftrightarrow\quad  g \eta_1 h^{-1}  \in \eta_2 + \fk s_\alpha^\perp \cap \fka^\perp \text{ for some }(g,h) \in T_\alpha\\
& \quad\Leftrightarrow\quad  g (\eta_1\downarrow \fk s_\alpha) h^{-1}  \in \eta_2\downarrow \fk s_\alpha \text{ for some }(g,h) \in T_\alpha,
\ea\]  which proves (2).%Observe that if $g\eta_1 h^{-1} = \eta_2 + \delta$ for some $\delta \in \fk s_\alpha^\perp\cap \fka^\perp$ and $(g,h) \in T_\alpha$, then $g(\eta_1 \downarrow \fk s_\alpha)h^{-1} = \eta_2 \downarrow \fk s_\alpha$.  Conversely, if $g(\eta_1\downarrow \fk s_\alpha)h^{-1} = g\eta_1h^{-1}\downarrow \fk s_\alpha = \eta_2 \downarrow \fk s_\alpha$ for some $(g,h) \in T_\alpha$, then $\delta \overset{\mathrm{def}}= g\eta_1 h^{-1} - \eta_2  \in \fk s_\alpha^\perp \cap \fk a^\perp$; in particular, $\delta \in \fk s_\alpha^\perp$ is clear, and $\delta \in \fk a^\perp$ since $\eta_i \in \fka^\perp$ and $g,h \in U_{\fk h}$, so in turn $g\eta_1 h^{-1}\in \fk a^\perp$.  Hence $\chi_\fkn^{\alpha +\eta_1} = \chi_\fkn^{\alpha+\eta_2}$ if and only if $\eta_1\downarrow \fk s_\alpha \in T_\alpha \cdot (\eta_2\downarrow \fk s_\alpha)$, which proves (2).

Write $\lambda = \alpha + \eta$ with $\alpha \in \fkh^\perp$ and $\eta \in \fka^\perp$.  Suppose $U_\fkh \alpha U_\fkh$ has $N = |U_\fkh \alpha U_\fkh|$ distinct elements of the form $\alpha_i = g_i \alpha h_i^{-1}$ for $i=1,\dots,N$ where $g_i,h_i \in U_\fkh$.  Let $\eta_i = g_i \eta h_i^{-1}$; then it follows from (1) and (2) that 
\[ |U_\fkn \lambda U_\fkn|  =\sum_{i=1}^N |\fk s_{\alpha_i}^\perp \cap \fka^\perp ||T_{\alpha_i}\cdot(\eta_i\downarrow \fk s_{\alpha_i})|= \sum_{i=1}^N |U_\fka \alpha_i U_\fka||T_{\alpha_i}\cdot(\eta_i\downarrow \fk s_{\alpha_i})|.\]  Since $hU_\fka  = U_\fka h $ for all $h \in U_\fkh$, we have $|U_\fka \alpha_i U_\fka| = |g_i U_\fka \alpha U_\fka h_i^{-1}| =  |U_\fka \alpha U_\fka|$ for all $i$.  Since $T_{\alpha_i} = (g_i,h_i)T_{\alpha} (g_i,h_i)^{-1}$ and $\fk s_{\alpha_i} = g_i\fk s_\alpha h_i^{-1}$, it similarly follows that $|T_{\alpha_i} \cdot (\eta_i\downarrow \fk s_{\alpha_i})| = |T_\alpha \cdot (\eta\downarrow \fk s_\alpha)|$ for all $i$.  Hence $|U_\fkn \lambda U_\fkn| = \sum_{i=1}^N |U_\fka \alpha U_\fka| |T_\alpha \cdot (\eta\downarrow \fk s_\alpha)| = |U_\fkh \alpha U_\fkh| |U_\fka \alpha U_\fka| |T_\alpha \cdot (\eta\downarrow \fk s_\alpha)|$, proving (3).
\end{proof}

We can now describe a supercharacter analogue for Mackey's ``method of little groups.''    As above, we continue to let $\fkn, \fkh, \fka$ be nilpotent $\FF_q$-algebras such that $U_\fkn = U_{\fk h} \ltimes U_{\fk a}$ is a semidirect product of algebra groups with $U_\fka$ supernormal and $\fka^2=0$.  The set of supercharacters of the abelian algebra group $U_\fka$ coincides with the set of its irreducible characters, since $\fka^2=0$ implies that every supercharacter is linear.  The group $U_{\fk h}$ acts compatibly on the left and right on this set by the formula
\[g\tau h (1+A) = \tau(1+g^{-1}A h^{-1}),\qquad\text{for } g,h \in U_\fkh,\  A \in \fka,\ \tau \in \Irr(U_\fka).\]   Given a supercharacter $\tau$ of $U_{\fk a}$, let $L_\tau$ and $R_\tau$ denote its left and right stabilizers in $U_{\fk h}$ and set $S_\tau = L_\tau \cap R_\tau$.  In addition, let $T_\tau = \{ (g,h) \in U_{\fk h}\times U_{\fk h} : g\tau h^{-1} = \tau\}$ denote the stabilizer of $\tau$ in $U_{\fk h}\times U_{\fk h}$.  

If we write $\tau$ explicitly as $\tau = \chi_\fka^\alpha$ for some $\alpha\in \fka^*$, then  in the notation of the preceding lemmas, we have $L_\tau = L_\alpha$, $R_\tau = R_\alpha$, $S_\tau = S_\alpha$, and $T_\tau = T_\alpha$.  By Lemma \ref{lemma6.3}, we therefore can assert the following.   $S_\tau$ is an algebra group of the form $S_\tau = 1 + \fk s_\tau$ for a subalgebra $\fk s_\tau \subset \fk h$.  The algebra $\fk s_\tau$ is closed under the action of $T_\tau$, and so $T_\tau$ acts on its dual space $\fk s_\tau^*$ by the formula
 \[ (g,h)\cdot \eta(X) = \eta(g^{-1}Xh),\qquad\text{for }(g,h)\in T_\tau,\ X\in \fk s_\tau,\ \eta \in \fk s_\tau^*.\]  The orbits of this action consist of unions of two-sided $S_\tau$-orbits in $\fk s_\tau^*$ because $T_\tau$ contains $S_\tau\times S_\tau$ as a subgroup, and so we have an equivalence relation $\sim_\tau$ on the set of supercharacters of $S_\tau$ defined by 
\[ \chi_{\fk s_\tau}^\mu \sim_\tau \chi_{\fk s_\tau}^{\nu}\text{ if and only if }\mu \in T_\tau\cdot \nu\text{ for }\mu,\nu \in \fk s_\tau^*.\]

%That $U_{\fk h}$ acts compatibly on the left and right on the set of supercharacters of $U_{\fk a}$  in the manner described follows simply from the fact that $U_{\fk a}$ is supernormal in $U_{\fk n}$.    Since $U_{\fk a}$ is abelian, the supercharacters of $U_{\fk a}$ are all linear and are each of the form $\tau_\alpha(g) \overset{\mathrm{def}}= \chi_{\fka}^{\alpha\downarrow \fk a}(g) =  \theta \circ (\alpha \downarrow \fk a)(g-1)$ for a unique $\alpha \in \fk h^\perp$.  Under this notation, $U_{\fk h}$ acts on $\tau_\alpha$ for $\alpha \in \fk h^\perp$ by $g\tau_\alpha h = \tau_{g\alpha h}$.  
%Note that this identity is well defined since if $g,h \in U_{\fk h}$ and $\alpha \in \fk h^\perp$, then $g\alpha h \in \fk h^\perp$ because $g^{-1} \fk h h^{-1} = \fk h$.  Likewise, note that if $g,h \in U_{\fk h}$ and $\eta \in \fk a^\perp$ then $g\eta h \in \fk a^\perp$ since $g^{-1}\fk a h^{-1} = \fk a$.
%Hence for any supercharacter $\tau = \tau_\alpha$ of $U_{\fk a}$ where $\alpha \in \fk h^\perp$, we have in the notation of the preceding lemmas that $L_\tau = L_\alpha$ and $R_\tau = R_\alpha$ and $S_\tau = S_\alpha$ and $\fk s_\tau = \fk s_\alpha$ and $T_\tau = T_\alpha$ and $\sim_\tau = \sim_\alpha$.  Therefore (2) follows from Lemma \ref{lemma6.3}.

The following theorem now classifies the supercharacters of $U_\fkn$.

\begin{theorem}\label{main-thm}
 Let $\fkn, \fkh, \fka$ be nilpotent $\FF_q$-algebras such that $U_\fkn = U_{\fk h} \ltimes U_{\fk a}$ is a semidirect product of algebra groups with $U_\fka$ supernormal and $\fka^2=0$.  Fix a set $\cR$ of representatives of the distinct two-sided $U_{\fk h}$-orbits of supercharacters of $U_{\fk a}$.  
%Let $\fkn$ be a nilpotent $\FF_q$-algebra with subalgebras $\fk h$ and $\fk a$ such that $U_\fkn = U_{\fk h} \ltimes U_{\fk a}$ is a semidirect product of algebra groups with $U_\fka$ supernormal and abelian.    Fix a set $\cR$ of representatives of the distinct two-sided $U_{\fk h}$-orbits of supercharacters of $U_{\fk a}$.  Then 
Each supercharacter of $U_\fkn$ then corresponds to a unique pair $(\cO_\tau,  \cC_\psi)$, where $\cO_\tau$ % = U_\fkh \tau U_\fkh$ 
denotes the two-sided $U_\fkh$-orbit of a supercharacter $\tau \in \cR$, and $\cC_\psi$ % = \{ \vartheta : \vartheta \sim_\tau \psi\}$ 
denotes the $\sim_\tau$-equivalence class of a supercharacter $\psi$ of $S_\tau$.  In particular, the map
\be \label{clifford-bijection} \barr{ccc}
\biggl\{ \text{Supercharacters of $U_\fkn$}\biggr\} & \to & \biggl \{ (\cO_\tau,  \cC_\psi) : %\ba &
\text{$\tau \in \cR$ and $\psi$ a supercharacter of $S_\tau$}
%, where} \\ &\text{$\cR$ is a set of representatives of the distinct} \\ 
%&\text{two-sided $U_{\fk h}$-orbits of supercharacters of $U_{\fk a}$} \ea 
\biggr\} 
\\ 
\chi_\fkn^{\lambda} & \mapsto & (\cO_\tau,  \cC_\psi ),\text{ where } \tau = \chi_{\fk a}^{\lambda \downarrow \fk a} \in \cR,\ \psi = \chi_{\fk s_\tau}^{\lambda \downarrow \fk s_\tau}
\earr \ee is a bijection, with inverse $(\cO_\tau, \cC_\psi) \mapsto \chi$ where $\chi$ is the supercharacter of $U_\fkn$ determined by the identity
\be\label{explicit-inverse}  m_\chi \chi = \SInd_{U_\fka S_\tau}^{U_\fkn}\bigg( \sum_{\vartheta \in \cC_\psi} m_\vartheta |\cO_\tau|\tilde \vartheta \otimes \tilde \tau \bigg),\quad\text{where } m_\vartheta = \frac{\vartheta(1)}{\langle \vartheta,\vartheta\rangle_{S_\tau}},\ m_\chi = \frac{\chi(1)}{\langle \chi,\chi\rangle_{U_\fkn}}.\ee Here $m_\chi$ and $m_\vartheta$ denote the multiplicities of $\chi$  and $\vartheta$ in the characters of $\CC U_\fkn$ and $\CC S_\tau$, and $\tilde \vartheta$ and $\tilde \tau$ are the characters of $U_\fka S_\tau$ defined by $\tilde \vartheta(as) = \vartheta(s)$ and  $\tilde \tau(as) = \tau(a)$ for $a \in U_\fka$, $s \in S_\tau$.

\end{theorem}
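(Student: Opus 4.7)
The plan is first to establish the bijection (\ref{clifford-bijection}) using Lemma \ref{lemma6.4}, and then to verify the explicit inverse formula (\ref{explicit-inverse}) by computing the relevant superinduction via Lemma \ref{sind-lemma}. Throughout I use the decomposition $\fkn^* = \fkh^\perp \oplus \fka^\perp$, which exists since $\fkn = \fkh \oplus \fka$ as a vector space. Each $\lambda \in \fkn^*$ thus writes uniquely as $\lambda = \alpha + \eta$ with $\alpha \in \fkh^\perp$ and $\eta \in \fka^\perp$. Because $\eta \downarrow \fka = 0$, the supercharacter $\tau = \chi_\fka^{\lambda \downarrow \fka}$ depends only on $\alpha$, and its two-sided $U_\fkh$-orbit corresponds to the two-sided $U_\fkh$-orbit of $\alpha$ in $\fkh^\perp$. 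In the notation of Lemmas \ref{lemma6.3} and \ref{lemma6.4}, if $\alpha$ represents $\tau$ then $S_\tau = S_\alpha$, $T_\tau = T_\alpha$, and $\fk s_\tau = \fk s_\alpha$, so $\psi = \chi_{\fk s_\tau}^{\lambda \downarrow \fk s_\tau}$ is determined by the image of $\eta \downarrow \fk s_\alpha$ in $\fk s_\alpha^*$.

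With these observations in hand, Lemma \ref{lemma6.4}(1) shows the two-sided $U_\fkh$-orbit of $\alpha$ is an invariant of $\chi_\fkn^\lambda$, and Lemma \ref{lemma6.4}(2) shows that once $\alpha$ is fixed, the $T_\alpha$-orbit of $\eta \downarrow \fk s_\alpha$ is an invariant of $\chi_\fkn^\lambda$ as well. Together these give the well-definedness and injectivity of (\ref{clifford-bijection}). Surjectivity follows by reverse engineering: given a pair $(\cO_\tau, \cC_\psi)$, pick $\alpha \in \fkh^\perp$ with $\chi_\fka^{\alpha \downarrow \fka} = \tau$, choose any $\eta' \in \fk s_\alpha^*$ representing $\psi$ under $\sim_\tau$, and lift $\eta'$ to some $\eta \in \fka^\perp$ using the surjective restriction $\fka^\perp \cong \fkh^* \to \fk s_\alpha^*$; then $\chi_\fkn^{\alpha+\eta}$ is a preimage.

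To verify the inverse formula (\ref{explicit-inverse}), set $\fkm = \fka \oplus \fk s_\tau$. Since $\fka$ is a two-sided ideal of $\fkn$ and $\fk s_\tau \subset \fkh$ is a subalgebra, $\fkm$ is a subalgebra of $\fkn$ with $U_\fkm = U_\fka S_\tau$, and $U_\fka$ is supernormal in $U_\fkm$ with $\fka^2 = 0$. Fix $\alpha \in \fkh^\perp$ representing $\tau$ and let $\beta \in \fkm^*$ extend $\alpha \downarrow \fka$ by zero on $\fk s_\tau$. Using $\fka^2 = 0$ together with the defining containments $\fk s_\tau \fka, \fka \fk s_\tau \subset \ker \alpha$ coming from $\fk s_\tau = \fk s_\alpha$, one checks $\beta$ is two-sided $U_\fkm$-invariant, so $\tilde \tau = \chi_\fkm^\beta$ is a $1$-dimensional supercharacter. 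By Proposition \ref{lift-prop}, each $\vartheta \in \cC_\psi$ lifts to $\tilde\vartheta = \chi_\fkm^{\tilde\mu}$ where $\tilde\mu \in \fkm^*$ vanishes on $\fka$ and satisfies $\tilde\mu \downarrow \fk s_\tau = \mu$ with $\vartheta = \chi_{\fk s_\tau}^\mu$. Applying (\ref{formula}) and using $U_\fkm\beta U_\fkm = \{\beta\}$ yields $\tilde\vartheta \otimes \tilde\tau = \chi_\fkm^{\tilde\mu + \beta}$.

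The final step applies Lemma \ref{sind-lemma} to $\SInd_{U_\fkm}^{U_\fkn}(\chi_\fkm^{\tilde\mu + \beta})$ for each $\vartheta \in \cC_\psi$ and collects the terms. The constituents produced are exactly those $\chi_\fkn^{\alpha' + \eta'}$ with $\alpha' \in U_\fkh \alpha U_\fkh$ and $\eta' \downarrow \fk s_{\alpha'}$ in the $T_{\alpha'}$-orbit of $\eta \downarrow \fk s_\alpha$, which by Lemma \ref{lemma6.4}(1)--(2) coincides with the preimage of $(\cO_\tau, \cC_\psi)$ in (\ref{clifford-bijection})---that is, a single supercharacter $\chi$ realized many times. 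The main obstacle is the bookkeeping of coefficients in this last step: I must show that after summing the coefficient $\frac{|U_\fkm(\tilde\mu + \beta)|}{|U_\fkm(\tilde\mu + \beta)U_\fkm||U_\fkn \lambda|}$ from Lemma \ref{sind-lemma} over all $\lambda$ with $\chi_\fkn^\lambda = \chi$ and over all $\vartheta \in \cC_\psi$, and multiplying by $m_\vartheta |\cO_\tau|$, the total equals $m_\chi$. Lemma \ref{lemma6.4}(3) provides the essential identity $|U_\fkn \lambda U_\fkn| = |U_\fkh \alpha U_\fkh||U_\fka \alpha U_\fka||T_\alpha \cdot (\eta \downarrow \fk s_\alpha)|$, which together with $|\cO_\tau| = |U_\fkh \alpha U_\fkh|/|U_\fka \alpha U_\fka|^{-1}$-type normalizations and the explicit forms of $m_\chi$, $m_\vartheta$ makes the coefficient match, completing the proof.
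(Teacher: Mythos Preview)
Your approach is correct and genuinely different from the paper's, so a brief comparison is in order.

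The paper establishes well-definedness of (\ref{clifford-bijection}) exactly as you do, via Lemma~\ref{lemma6.4}(1)--(2). But for the inverse formula (\ref{explicit-inverse}), rather than invoking Lemma~\ref{sind-lemma}, the paper unwinds the definition of superinduction directly: it writes $\SInd_{U_\fka S_\tau}^{U_\fkn}\bigl(\sum_{\vartheta\in\cC_\psi} m_\vartheta|\cO_\tau|\tilde\vartheta\otimes\tilde\tau\bigr)(e)$ as an explicit sum over $U_\fkn\times U_\fkn$, then uses an orthogonality identity (based on Lemma~\ref{lemma6.3}(2), namely $(\fka+\fk s_\tau)^\perp = U_\fka\alpha U_\fka - \alpha$) to trade the characteristic function of $\fka+\fk s_\tau$ for an average over $U_\fka\times U_\fka$, rewrites the $T_\tau$-orbit sum over $\cC_\psi$ as a sum over $T_\tau$ itself, and then collapses all the summations into $\sum_{\mu\in U_\fkn\lambda U_\fkn}\theta\circ\mu(e-1) = m_\chi\chi(e)$. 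This is a single chain of equalities, with the constants from Lemma~\ref{lemma6.3}(2) and Lemma~\ref{lemma6.4}(3) falling out at the end.

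Your route via Lemma~\ref{sind-lemma} also works, and the orbit bookkeeping you flag as ``the main obstacle'' does go through: one checks that the two-sided $U_\fkm$-orbit of $\tilde\mu+\beta$ in $\fkm^*$ has constant $\fka$-component $\alpha\downarrow\fka$ and $\fk s_\tau$-component ranging over $S_\tau\mu S_\tau$, so all $\lambda$ appearing in $\SInd_{U_\fkm}^{U_\fkn}(\chi_\fkm^{\tilde\mu+\beta})$ satisfy $\chi_\fkn^\lambda = \chi$; counting them and using $|U_\fka\alpha U_\fka||S_\tau| = |U_\fkh|$ from Lemma~\ref{lemma6.3}(2), $|U_\fkn\lambda U_\fkn| = |U_\fkh\alpha U_\fkh||U_\fka\alpha U_\fka||T_\tau\cdot\eta_0|$ from Lemma~\ref{lemma6.4}(3), and $\sum_{\vartheta\in\cC_\psi}|S_\tau\mu S_\tau| = |T_\tau\cdot\eta_0|$ gives exactly $m_\chi$. (Your expression ``$|\cO_\tau| = |U_\fkh\alpha U_\fkh|/|U_\fka\alpha U_\fka|^{-1}$-type normalizations'' is garbled; the correct identity is simply $|\cO_\tau| = |U_\fkh\alpha U_\fkh|$.) What your approach buys is modularity: Lemma~\ref{sind-lemma} packages the character-sum manipulation once, leaving only orbit combinatorics. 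What the paper's direct calculation buys is that it avoids having to analyze the $U_\fkm$-orbit structure of $\tilde\mu+\beta$ at all, which is where most of the care in your argument has to go.
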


\begin{remark} We have defined the map (\ref{clifford-bijection}) only for supercharacters indexed by functionals $\lambda \in \fkn^*$ such that $\chi_{\fka}^{\lambda\downarrow \fka} \in \cR$.  %, where $\cR$ is a fixed set of representatives of the $U_\fkh$-supercharacter orbits of $U_\fka$.  
Given any $\lambda \in \fkn^*$, however, we can always find some $\gamma \in U_\fkn \lambda U_\fkn$ such that $\chi_\fka^{\gamma \downarrow \fka} \in \cR$.  This follows since each $\tau \in \cR$ is of the form $\chi_\fka^{\alpha }$, where $\alpha \in \fka^*$ ranges over a set of representatives of the distinct two-sided $U_{\fkh}$-orbits in $\fka^*$,    %, since %.  This follows since 
%if we write $\tau = \chi_\fka^{\alpha\downarrow $ for $\alpha \in \fka^*$, then as $\tau$ ranges over all elements of $\cR$, $\alpha$ ranges over a set of representatives of the two-sided $U_\fkh$-obits in $\fka^*$, 
one of which must lie in the orbit $U_\fkh (\lambda \downarrow \fka) U_\fkh = \{ \gamma \downarrow \fka : \gamma \in U_\fkh\lambda U_\fkh\}$.  
This observation ensures that the map (\ref{clifford-bijection}) as stated is in fact defined for all supercharacters of $U_\fkn$.
\end{remark}

\begin{proof}

Parts (1) and (2) of Lemma \ref{lemma6.4} show the map (\ref{clifford-bijection}) to be well defined.  % and injective.
To prove that the map is a bijection, it suffices to derive the explicit inverse map $(\cO_\tau,\cC_\psi) \mapsto \chi$.  For this, write $\tau  = \chi_\fka^{\alpha\downarrow \fka}$ for $\alpha \in \fkh^\perp$ so that in our previous notation $S_\tau = S_\alpha$, $T_\tau = T_\alpha$, and $\fk s_\tau = \fk s_\alpha$.
Let $\psi = \chi_{\fk s_\tau}^{\eta_0}$ for some $\eta_0 \in \fk s_\tau^*$, and let $ \eta \in \fka^\perp$ be an arbitrary functional with $\eta \downarrow \fk s_\tau = \eta_0$.  Observe that $U_\fka S_\tau = U_{\fka + \fk s_\tau}$ since $U_{\fka+\fk s_\tau}$ contains both factor groups and has the same order as their product.  Finally, given $\gamma \in \fk s_\tau^*$, let $\tilde \gamma \in (\fka + \fk s_\tau)^*$ be the linear functional defined by $\tilde \gamma(A+H) = \gamma(H)$ for $A \in \fka$ and $H \in \fk s_\tau$.    %Finally, for $x,y,g\in U_\fkn$, let $\delta_{x,y}(g) = 1$ if $x(g-1)y+1 \in U_\fka S_\tau$ and let $\delta_{x,y}(g)=0$ otherwise.  

Now, fix an arbitrary element $e \in U_\fkn$.   By definition,
\[ \ba
\SInd_{U_\fka S_\tau}^{U_\fkn}\bigg( \sum_{\vartheta \in \cC_\psi} m_\vartheta |\cO_\tau| \tilde \vartheta \otimes \tilde \tau \bigg)(e) 
&=
\frac{|\cO_\tau|}{|U_\fkn||U_\fka||S_\tau|} \sum_{x,y \in U_\fkn} f(x(e-1)y)\ea\] where $f : \fkn \rightarrow \CC^\times$ is defined by  %$f(X) = 0$ if $X \notin \fka + \fk s_\tau$, and
\[ f(X) = \left\{\barr{ll} 
0,&\text{if }X \notin \fka + \fk s_\tau, \\ \\
\displaystyle \sum_{\gamma \in T_\tau\cdot \eta_0} \theta \circ(\tilde \gamma + \alpha)(X), & \text{if }X \in \fka + \fk s_\tau. 
\earr\right.
\] 
To simplify this formula we make two observations.  First, since $\fka+\fk s_\tau$ is closed under the action of $T_\tau$ and since $(\fka +\fk s_\tau)^\perp = \fka^\perp \cap \fk s_\tau^\perp= (U_\fka \alpha U_\fka - \alpha)$ by Lemma \ref{lemma6.3}, it follows by standard character orthogonality arguments that
\be\label{obs1} \frac{1}{|U_\fka |^2} \sum_{a,b \in U_\fka} \theta\circ (a\alpha b-\alpha)(gXh^{-1}) = \left\{\barr{ll} 0,&\text{if }X \notin \fka + \fk s_\tau, \\ 1,&\text{if } X\in \fka + \fk s_\tau.\earr\right.\qquad\text{for any }(g,h) \in T_\tau.\ee  
Second, since in addition $T_\tau$ fixes $\alpha$, it follows that if $X \in \fka + \fk s_\tau$ and $(g,h) \in T_\tau$ and $\gamma = (g,h)\cdot \eta_0$, then $(\tilde \gamma + \alpha)(X) = (\eta+\alpha)(g^{-1}Xh)$.
Thus 
\be\label{obs2} f(X) = \frac{|T_\tau\cdot \eta_0|}{|T_\tau|} \sum_{(g,h) \in T_\tau} \theta \circ(\eta + \alpha)(gXh^{-1}),\qquad\text{if }X \in \fka + \fk s_\tau. \ee
Multiplying (\ref{obs1}) and (\ref{obs2}) gives 
\[ f(X) = \frac{|T_\tau\cdot \eta_0|}{|U_\fka|^2|T_\tau|} 
\sum_{(g,h)\in T_\tau} \sum_{a,b \in U_\fka }\theta \circ (\eta +a\alpha b)(gXh^{-1}),\qquad\text{for \emph{all} }X\in \fkn.\] 
Substituting this into our initial formula, we obtain by removing redundant summations
\[\ba 
\SInd_{U_\fka S_\tau}^{U_\fkn}\bigg(\sum_{\vartheta \in \cC_\psi} m_\vartheta |\cO_\tau| \tilde \vartheta \otimes \tilde \tau \bigg)(e) 
&=
\frac{|\cO_\tau||T_\tau\cdot \eta_0|}{|U_\fkn||U_\fka|^3|S_\tau||T_\tau|} \sum_{x,y \in U_\fkn} \sum_{(g,h) \in T_\tau} \sum_{a,b \in U_\fka}\theta \circ (\eta + a\alpha b)(gx(e-1)yh^{-1})
\\&%=
%\frac{|\cO_\tau||T_\tau\cdot \eta_0|}{|U_\fkn|^2} \sum_{x,y \in U_\fkn} \sum_{\nu \in \fk s_\tau^\perp \cap \fka^\perp}\theta \circ (\eta + \nu + \alpha)(x(e-1)y)
% \\&
=\frac{|\cO_\tau||T_\tau\cdot \eta_0|}{|U_\fkn||U_\fka|^3|S_\tau|}  \sum_{x,y \in U_\fkn} \sum_{a,b \in U_{\fk a}} \theta \circ (\eta + a\alpha b)(x(e-1)y).
\ea\] 
Write $\lambda =  \alpha + \eta \in \fkn^*$, and note that $a\lambda b=\eta + a\alpha b $ for $a,b \in U_\fka$ by (4) of Proposition \ref{supernormal-properties}.  Since $\frac{|U_\fka \alpha U_\fka|}{|U_\fkh|} = \frac{1}{|S_\tau|}$ by Lemma \ref{lemma6.3} and $|\cO_\tau||T_\tau\cdot \eta_0||U_\fka \alpha U_\fka| =
|U_\fkh \alpha U_\fkh||U_\fka \alpha U_\fka||T_\tau\cdot \eta_0|=
 |U_\fkn \lambda U_\fkn|$ by Lemma \ref{lemma6.4}, we have
\[\ba 
\SInd_{U_\fka S_\tau}^{U_\fkn}\bigg( \sum_{\vartheta \in \cC_\psi} m_\vartheta |\cO_\tau|\tilde \vartheta \otimes \tilde \tau \bigg)(e) 
&=
\frac{|\cO_\tau||T_\tau\cdot \eta_0|}{|U_\fkn||U_\fka|^3|S_\tau|} \sum_{x,y \in U_\fkn} \sum_{a,b \in U_{\fk a}} \theta \circ(x^{-1}a \lambda by^{-1})(e-1) 
%\\&
%=\frac{|\cO_\tau||T_\tau\cdot \eta_0||U_\fka\alpha U_\fka|}{|U_\fkn|^2} \sum_{x,y \in U_\fkn} \theta \circ \lambda(x(e-1)y)
\\&
=\frac{|\cO_\tau||T_\tau\cdot \eta_0||U_\fka \alpha U_\fka|}{|U_\fkn||U_\fka||U_\fkh|}  \sum_{x,y  \in U_\fkn} \theta \circ (x\lambda y)(e-1) 
\\& = \sum_{\mu \in U_\fkn \lambda U_\fkn} \theta\circ \mu(e-1) 
\\&= \frac{\chi_\fkn^\lambda(1)}{\langle \chi_\fkn^\lambda,\chi_\fkn^\lambda\rangle_{U\fkn}}\chi_\fkn^\lambda(e).
\ea\]   Since $\lambda \downarrow \fka = \alpha \downarrow \fka$ we have  $\chi_\fka^{\lambda \downarrow \fka} = \tau \in \cR$, and since $\lambda \downarrow \fk s_\tau = \eta\downarrow \fk s_\tau = \eta_0$ we have $\chi_{\fk s_\tau}^{\lambda\downarrow \fk s_\tau} = \psi$.  Hence $\chi_\fkn^\lambda \mapsto (\cO_\tau, \cC_\psi)$ under the map (\ref{clifford-bijection}), which completes the proof of the theorem.
 \end{proof}

To conclude this work, we show in an example how one can apply 
 Theorem \ref{main-thm} to describe the supercharacters of a particular algebra group.  In order to keep the technical considerations in this example to a minimum, we first state the following general purpose lemma.

Fix a positive integer $n$.  For any subset $J \subset [[n]]$  of positions above the diagonal in an $n\times n$ matrix, define a vector space $\fkn_J =\FF_q\spanning\{e_{ij} \in \fkn_n(q) : (i,j) \in J\}$ over $\FF_q$.  Suppose $L,R\subset  U_n(q)$ are algebra groups over $\FF_q$ such that 
\be \label{cond-curr} g X, X h \in \fkn_J,\qquad\text{for all $g \in L$, $h \in R$, $X \in \fkn_J$.}\ee  Then the product group $L\times R$ acts linearly on $\fkn_J$ by $(g,h)\cdot X = gXh^{-1}$ and in turn on its dual space $\fkn_J^*$ by $(g,h)\cdot\lambda(X) = \lambda(g^{-1}Xh)$ for $(g,h) \in L\times R$, $X \in \fkn_J$, and $\lambda \in \fkn_J^*$.  Given these observations, we have the following result.

\begin{lemma}\label{example-lemma}  Let $J\subset [[n]]$ and suppose $L,R\subset  U_n(q)$ are algebra groups over $\FF_q$ satisfying (\ref{cond-curr}).  
Assume for all $1\leq i <j<k<l \leq n$ the following condition holds:
\be \label{cond-6.3} (i,k),(j,k)\in J \text{ implies } 1 + e_{ij} \in L\qquad\text{and}\qquad 
(j,k),(j,l) \in J \text{ implies } 1 + e_{kl} \in R.\ee
Then the $L\times R$-orbits in $\fkn_J$ and in $\fkn_J^*$ are both in bijection with the set %$\sP_J\overset{\mathrm{def}} = 
$\sP_n(q) \cap \fkn_J$.%$\{ \lambda \in \sP_n(q) : \supp(\lambda ) \subset J\}$.
\end{lemma}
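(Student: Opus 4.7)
The plan is to obtain both bijections by combining the row-and-column reduction algorithm behind Yan's classification of superclasses of $U_n(q)$ with a verification that conditions \eqref{cond-curr}--\eqref{cond-6.3} already supply every generator the algorithm requires.

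Uniqueness will come almost for free. Since $L,R\subset U_n(q)$, any two elements of $\fkn_J$ in the same $L\times R$-orbit lie in the same two-sided $U_n(q)$-orbit on $\fkn_n(q)$, and the bijections in \eqref{U_n(q)-classification} state that each such $U_n(q)$-orbit contains at most one element of $\sP_n(q)$. The analogous reasoning, applied via the supercharacter side of \eqref{U_n(q)-classification}, will handle $\fkn_J^*$ after identifying $\fkn_J^*$ with matrices supported on $J$ through the dual basis $\{e_{ij}^*\}$.

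For existence I will induct on $n$. Given a nonzero $X\in \fkn_J$, I take as pivot the position $(j^*,k^*)$, where $j^*$ is the largest row index containing a nonzero entry of $X$ and $k^*$ is any column index with $X_{j^*,k^*}\neq 0$. I first clear the remainder of row $j^*$ by right-multiplying $X$ by $1-X_{j^*,k^*}^{-1}X_{j^*,l}\,e_{k^* l}$ for each $l>k^*$ with $X_{j^*,l}\neq 0$; this element lies in $R$ because $(j^*,k^*),(j^*,l)\in J$ triggers the second half of \eqref{cond-6.3}. I then clear column $k^*$ above row $j^*$ by left-multiplying by $1-X_{j^*,k^*}^{-1}X_{i,k^*}\,e_{i j^*}$ for each $i<j^*$ with $X_{i,k^*}\neq 0$, which lies in $L$ by the first half of \eqref{cond-6.3}. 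Maximality of $j^*$ ensures column $k^*$ is already zero below $(j^*,k^*)$; condition \eqref{cond-curr} keeps the matrix inside $\fkn_J$ throughout; and the pivot $(j^*,k^*)$ emerges as the unique nonzero entry of its row and column. The remaining nonzero entries all sit at positions $(i,l)$ with $i\neq j^*$ and $l\neq k^*$, so I will recurse using only row operations with first index $\neq j^*$ and column operations with second index $\neq k^*$. A direct computation shows that these restricted operations leave the pivot $(j^*,k^*)$ fixed, that they inherit conditions \eqref{cond-curr}--\eqref{cond-6.3} on the restricted index set, and that they remain sufficient to perform the clearing step for any pivot of the submatrix. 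The induction hypothesis applied to the reindexed problem of strictly smaller size then completes the reduction to an element of $\sP_n(q)\cap \fkn_J$. The argument for $\fkn_J^*$ proceeds verbatim, applied to the coefficient matrices of functionals under the identification $\lambda \leftrightarrow (\lambda(e_{ij}))_{(i,j)\in J}$.

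The main obstacle, and where the most care is required, is the recursion step: I must verify that after freezing $(j^*,k^*)$ the restricted generators genuinely generate enough of the action on the submatrix, that they preserve the pivot, and that \eqref{cond-curr}--\eqref{cond-6.3} transfer cleanly to the smaller index set. Each of these reduces to a short matrix computation that exploits the observation that row $j^*$ and column $k^*$ carry only the pivot after the initial pass, but writing it out carefully is the technical heart of the proof.
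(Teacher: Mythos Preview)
Your reduction strategy is the same as the paper's---Yan-style row/column clearing using exactly the generators supplied by \eqref{cond-6.3}---but the induction is organized differently, and two details need repair. The paper inducts on a height function $d(X)$ that orders positions in $[[n]]$ by proximity to the diagonal (entries on the first superdiagonal are maximal); the pivot is the support position of $X$ with largest $d$-value, which automatically guarantees there is nothing to the left of it in its row or below it in its column. This keeps every step inside the same ambient $\fkn_J$, so the paper never has to set up a smaller instance of the lemma: after clearing, one writes $xXy = X_{jk}e_{jk}+Y$ with $d(Y)<d(X)$, applies the hypothesis to $Y$ directly, and then adjusts the resulting $(g,h)$ by zeroing certain entries so that $ge_{jk}h=e_{jk}$. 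Your recursion on the active submatrix achieves the same end but buys you the obligation you flag as the ``main obstacle''; the paper's $d(X)$-scheme sidesteps it.

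Two fixes are needed in your plan. First, $k^*$ must be the \emph{minimal} column with $X_{j^*,k^*}\neq 0$, not an arbitrary one: your right-multiplications $1-X_{j^*,k^*}^{-1}X_{j^*,l}e_{k^*l}$ only clear positions $(j^*,l)$ with $l>k^*$, so any nonzero entry to the left of the pivot survives. Second, to preserve column $k^*$ in the recursion you need row operations $1+te_{ij}$ with $j\neq j^*$ (not merely $i\neq j^*$): such an operation adds $t$ times row $j$ to row $i$, and if $j=j^*$ it places $tX_{j^*,k^*}\neq 0$ at $(i,k^*)$. In practice the operations you actually invoke in the next clearing pass do satisfy $j\neq j^*$, but your stated restriction is not the one that guarantees pivot preservation.

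For $\fkn_J^*$ the paper does not rerun the argument at all: it cites \cite[Lemma~4.1]{DI06}, which says that compatible left/right actions on a finite vector space and its dual have the same number of orbits, so only the count in $\fkn_J$ is needed. Your plan to repeat the reduction on coefficient matrices does not proceed ``verbatim'': under the dual pairing the action becomes $\Lambda\mapsto (g^{-1})^T\Lambda\, h^T$ on coefficient matrices, which swaps the roles of rows and columns (and of $L$ and $R$), so you would need the transposed form of \eqref{cond-6.3}. This can be made to work, but the one-line citation is both shorter and more robust.
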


\begin{proof}
By Lemma 4.1 in \cite{DI06} the number of $L\times R$ orbits in $\fkn_J$ is the same as the number of orbits in $\fkn_J^*$, so we need only to count the orbits in $\fkn_J$.  We know from the description (\ref{U_n(q)-classification}) of the superclasses of $U_n(q)$ that the elements $\lambda \in \sP_n(q) \cap \fkn_J$ all belong to distinct two-sided $U_n(q)$-orbits, and hence to distinct $L\times R$-orbits.  To show that these elements represent all the orbits in $\fkn_J$, we use a straightforward argument by induction.    

To set this up, we define a function $f: [[n]] \rightarrow \NN$ by 
\[\ba  f(i,j) &= i + \biggl(\ \sum_{t=1}^{n-1- (j-i)} t\ \biggr)
= i +\frac{(n-1-j+i)(n-j+i)}{2},\ea
\qquad\text{for }1\leq i<j\leq n.\]  This function just orders the positions above the diagonal in an $n\times n$ matrix; for example, if $n=4$ then 
\[ (f(i,j))_{1\leq i < j\leq n} = \(\barr{cccc} 
  & 4 & 2 & 1 \\
  &   & 5 & 3 \\
    &  &   & 6 \\
  &   &   &   \earr\).
\]  Now define $d(X) = 0$ if $X=0$ and $d(X) = \max\{ f(i,j): X_{ij} \neq 0\}$ otherwise.

We want to show that every $X \in \fkn_J$ belongs to the $L\times R$-orbit of some $\lambda \in \sP_n(q) \cap \fkn_J$.  
To do this, we induct on $d(X)$.  If $d(X) = 0$ then $X = 0$ and this is obvious, so assume $X \neq 0$ and that any $Y \in \fkn_J$ with $d(Y) < d(X)$ belongs to the orbit of some $\lambda \in \sP_n(q) \cap \fkn_J$.  Let $(j,k) \in \supp(X)$ be the position with $f(j,k) = d(X)$ and define 
\[ x = \prod_{i < j} \( 1 - X_{ik} (X_{jk})^{-1} e_{ij}\)\qquad\text{and}\qquad y = \prod_{\ell>k} \( 1 - X_{j\ell} (X_{jk})^{-1} e_{k\ell}\)
\] where the products (of commuting factors) are taken in any order.  For each $i<j$, the corresponding factor in $x$ lies in $L$ if $(i,k) \in J$ by (\ref{cond-6.3}), and is $1 \in L$ if $(i,k) \notin J$, so $x \in L$.  By similar reasoning, $y \in R$.  Therefore $xXy \in \fkn_J$ and, more significantly, one can check that $xXy = X_{jk}e_{jk}  + Y$ where $Y \in \fkn_J$ has all zeros in $j$th row and $k$th column, and has $d(Y) < d(X)$.

Consequently, by inductive hypothesis there exists $(g,h) \in L\times R$ with $gYh = \lambda  \in \sP_n(q) \cap \fkn_J$.  We may assume that $\lambda$ has no nonzero entries in the $j$th row or $k$th  column, since setting $g_{j\ell} = 0$ for all $\ell>j$ and $h_{ik} = 0$ for all $i<k$ has the effect of replacing row $j$ and column $k$ in $\lambda$ with zeros, in which case $gYh = \lambda$ remains an element of $\sP_n(q) \cap \fkn_J$.  Likewise, we may assume that $g_{ij} = 0$ for all $i<j$ and $h_{k\ell} = 0$ for all $\ell>k$ as these entries have no effect on the product $gYh$.  It follows from these assumptions that $ge_{jk}h = e_{jk}$ and in turn that $gxXyh = X_{jk}e_{jk} + \lambda  \in \sP_n(q) \cap \fkn_J$.  This proves by induction that the elements in $\sP_n(q) \cap \fkn_J$ index the distinct $L\times R$ orbits in $\fkn_J$.
%Since these orbits lie in disjoint two-sided $U_n(q)$-orbits by (\ref{U_n(q)-classification}), it follows that they comprise all distinct $L\times R$ orbits in $\fkn_J$.  By Lemma 4.1 in \cite{DI06} the number of $L\times R$ orbits in $\fkn_J$ is the same as the number of orbits in $\fkn_J^*$, so the lemma follows.
\end{proof}

Before proceeding to our example, we introduce a final bit of notation.  
Given $1\leq i \leq n$ let 
\be\label{N_q-def}
\ba N_{n,i}(q) %&=  \#\left\{\barr{c} \text{$\FF_q$-labeled set partitions of $[n]$ such that} \\ \text{$i$ is the largest element of its  part}\earr\right\}\\
&=  \left|\left\{ \lambda\in \sP_n(q) : \lambda_{ij} = 0\text{ for all $j$}\right\}\right| = \left|\left\{ \lambda\in \sP_n(q) : \lambda_{j,n+1-i} = 0\text{ for all $j$}\right\} 
\right|.
 \ea\ee
The second equality follows by noting that the antitranspose map on $n\times n$ matrices defines  an involution of $\sP_n(q)$.  %, it follows that $N_q(n,n+1-i)$ is the number of set partitions $\lambda \in \sP_n(q)$ such that $i$ is the smallest element of its part, or equivalently, the number of set partitions whose matrices contain no nonzero entries in the $i$th column.  
Also, since $(q-1)N_{n,i}(q)$ is simply the number of $\lambda \in \sP_{n+1}(q)$ with a nonzero entry in position $(1,i)$, it follows that 
\be \label{N_q-id} B_n(q) + \sum_{i=1}^n (q-1)N_{n,i}(q) = B_{n+1}(q).\ee

\begin{example} Fix two positive integers $m$ and $n$.  Let $\H\subset \cP$ denote the posets on $[m+n]$ 
corresponding to the partial orderings
\be \label{posets}\H\ =\  \xy<0.45cm,1.5cm> \xymatrix@R=.3cm@C=.3cm{
  &      m+n\\
m &     \vdots \ar @{-} [u] &  \\
  \vdots \ar @{-} [u] &    m+1 \ar @{-} [u] \\
1 \ar @{-} [u]    &   \\
}\endxy
\qquad\text{and}\qquad
\cP\ =\  \xy<0.45cm,1.5cm> \xymatrix@R=.3cm@C=.3cm{
  &      m+n\\
m  \ar @{-} [ur] &     \vdots \ar @{-} [u] &  \\
  \vdots \ar @{-} [u] &    m+1 \ar @{-} [u] \\
1 \ar @{-} [u]   \ar @{-} [ur]   &   \\
}\endxy
 \ee
and set $\cA = \cP - \H = \{ (1, m+i), (j,m+n) : 1\leq i \leq  n,\ 1\leq j < m \}$. Then $\cA$ is also a poset on $[m+n]$ and $\cA \vartriangleleft \cP$.  Furthermore, since the partial ordering represented by $\cA$ contains no 3-chains, the algebra $\fkn_\cA$ satisfies $(\fkn_\cA)^2=0$.  Thus the pattern group $U_\cP = U_\H\ltimes U_\cA$ is given by a semidirect product of algebra groups with $U_\cA$ supernormal and $(\fkn_\cA)^2=0$, so Theorem \ref{main-thm} applies.  

Elements of the algebra $\fkn_\cP$ are $(m+n)\times(m+n)$ matrices of the form
\[X(a,b,c,x,y) \overset{\mathrm{def}}= \(\barr{r|c} x &  \barr{ll} a^T & c \\  0 & b \earr \\ \hline  0 &  y \earr\),\quad\text{where }x \in \fkn_m(q),\ y \in \fkn_n(q),\ a \in \FF_q^{n-1},\ b\in \FF_q^{m-1},\ c \in\FF_q.
%  \(\barr{cccccccccc}
% 0 & x_{12} & x_{13} & \cdots & x_{1m} & a_1 & a_2 & \cdots & a_{n-1} & c \\
%    & 0          & x_{23} & \cdots & x_{2m} & 0          & 0          & \cdots & 0         &  b_{m-1} \\
%    &              & 0         & \ddots & \vdots & \vdots & \vdots   &            & \vdots & \vdots \\
%    &              &            & \ddots  & x_{m-1,m} & 0         & 0           & \cdots  & 0          & b_2 \\ 
%    &              &            &              & 0          & 0         & 0           & \cdots  & 0          & b_1 \\ %\hline 
%    &              &            &              &             & 0          & y_{12}  & \cdots  & y_{1,n-1}  & y_{1n} \\
%    &              &            &              &             &             & \ddots   & \ddots & \vdots & \vdots \\  
%    &              &            &              &             &             &            & 0           & y_{n-2,n-1} & y_{n-2,n} \\
%    &              &            &              &             &             &            &              & 0           & y_{n-1,n} \\
%    &              &            &              &             &             &            &              &               & 0 
%  \earr\) \in \fkn_\cP^* .
\]  The subalgebra $\fkn_\H$ consists of all such matrices with $a=b=c = 0$, and so we have a natural isomorphism $U_\H \cong U_m(q) \times  U_n(q)$.  Likewise, the subalgebra $\fkn_\cA$ consists of all such matrices with $x=y = 0$.  We can naturally identify the dual space $\fkn_\cA^*$ with $\fkn_\cA$; under this identification, let $\alpha = \alpha(a,b,c) \in \fkn_\cA^*$ denote the linear functional corresponding to $X(a,b,c,0,0)$.  Each such $\alpha$ indexes a distinct supercharacter $\tau = \tau(a,b,c)$ of $U_\cA$ given by $\tau(g) = \theta \circ \alpha(g-1)$, and the action of $U_{\H}$ on the supercharacters of $U_\cA$ is equivalent to the group's action on $\fkn_\cA^*$.  

If we identify $\alpha$ with its corresponding matrix, then the right action of $U_\H$ on $\alpha$ adds multiples of entries in the first row to entries which are further to the left; i.e., we can add a multiple of $c$ to any $a_i$, or add a multiple of $a_j$ to each $a_i$ with $i<j$.  Similarly, the left action of $U_\H$ on $\alpha$ adds multiples of entries in the last column to entries which are further down.  It follows that the two-sided $U_\H$-orbits of the characters of $U_\cA$ are indexed by the set of $\alpha = \alpha(a,b,c)$ such that either $a=b=0$ and $c \in\FF_q^\times$, or $c=0$ and $a,b$ have at most one nonzero coordinate.  Thus, every $U_\H$ character orbit is indexed by a unique $\tau = \tau(a,b,c)$, where $a,b,c$ are described by one of the following five cases:

\begin{enumerate}
\item[(i)] $a=b=c =0$.   Then $T_\tau = S_\tau = U_\H$, and the $\sim_\tau$ equivalence classes $C_\psi$ of supercharacters of $S_\tau$ correspond to the distinct supercharacters of $U_\H$.  There are $B_m(q)B_n(q)$ of these, since $U_\H \cong U_{m}(q) \times  U_n(q)$.

%then $U_\H \alpha U_\H$ is given by 
% \[ U_\H \alpha U_\H = \left\{ \(\barr{r|c} 0 & \
% \barr{lllll}
%  a_1 & a_2 & \cdots & a_{n-1} & c \\
%      0          & 0          & \cdots & 0         &  b_{m-1} \\
%      \vdots & \vdots   &            & \vdots & \vdots \\
%      0         & 0           & \cdots  & 0          & b_2 \\ 
%      0         & 0           & \cdots  & 0          & b_1
% \earr \\  \hline 0 & 0 \earr\)
%  \right\}\]
\item[(ii)] $b=c = 0$ and $a = [0\ \cdots \ 0\ a_i \ 0 \cdots \ 0]^T \in \FF_q^{n-1}$ where $a_i \in \FF_q^\times$ and $1\leq i \leq n-1$.    Then $L_\tau = U_\H$ and $R_\tau$ is  the subgroup of $U_\H$ of matrices with no nonzero entries above the diagonal in the $(m+i)$th column, and it follows that $S_\tau = R_\tau$ and $T_\tau = U_\H\times R_\tau$.  After applying Lemma \ref{example-lemma} with $L = L_\tau$, $R = R_\tau$, and $\fkn_J = \fk s_\tau$, it follows that the $\sim_\tau$ equivalences classes  are indexed by pairs $(\lambda,\mu) \in \sP_m(q) \times \sP_n(q)$, where $\lambda$ is arbitrary, but we require that $\mu_{ki} = 0$ for all $k$.  There are $B_m(q) N_{n,n+1-i}(q)$ of these.

\item[(iii)] $a=c = 0$ and $b = [0\ \cdots \ 0\ b_j \ 0 \cdots \ 0]^T \in \FF_q^{m-1}$ where $b_j \in \FF_q^\times$ and $1\leq j \leq m-1$.    Then $R_\tau = U_\H$ and $L_\tau$ is  the subgroup of $U_\H$ of matrices with no nonzero entries above the diagonal in the $j$th row, and it follows that $S_\tau = L_\tau$ and $T_\tau = L_\tau\times U_\H$.  After applying Lemma \ref{example-lemma} as in (ii), it follows that the $\sim_\tau$ equivalences classes are again indexed by pairs $(\lambda,\mu) \in \sP_m(q)\times \sP_n(q)$, where this time $\mu$ is arbitrary, but we require that $\lambda_{j+1,k}=0$ for all $k$.  There are $ N_{m,j+1}(q)B_n(q)$ of these.

\item[(iv)] $c = 0$ and $a$, $b$ are given  as in Cases 2 and 3, respectively.  Then $R_\tau$ is given as in Case 2 while $L_\tau$ is given as in Case 3, and $T_\tau = L_\tau \times R_\tau$.  Thus $S_\tau$ consists of the subgroup of $U_\H$ of matrices with no nonzero entries above the diagonal in the $(m+i)$th column or $j$th row.  After applying Lemma \ref{example-lemma} with $L= L_\tau$, $R=R_\tau$, and $\fkn_J = \fk s_\tau$, it follows as before that the $\sim_\tau$ equivalences classes are indexed by pairs $(\lambda,\mu) \in \sP_m(q) \times \sP_n(q)$, where we require that $\lambda_{j+1,k} = \mu_{ki} = 0$ for all $k$.  There are $ N_{m,j+1}(q)N_{n,n+1-i}(q)$ of these.

\item[(v)] $a=b = 0$ and $c \in \FF_q^\times$.    Then $L_\tau \cong U_{m-1}(q)\times  U_n(q)$ and $R_\tau \cong U_{m}(q) \times U_{n-1}(q)$ are  the subgroups of $U_\H$ of matrices with no nonzero entries above the diagonal in the first row and last column, respectively, and $T_\tau = L_\tau \times R_\tau$.  Hence $S_\tau \cong U_{m-1}(q) \times U_{n-1}(q)$ and the $\sim_\tau$ equivalence classes correspond to the distinct supercharacters of $S_\tau$.  There are $B_{m-1}(q)B_{n-1}(q)$ of these.

\end{enumerate}

The set of supercharacters $\tau = \tau(a,b,c)$ corresponding to these five cases uniquely index the distinct $U_\H$-orbits of supercharacters of $U_\cA$.  Case (i) describes only one such $\tau$; cases (ii), (iii), and (v) each describe $q-1$; and case (iv) describes $(q-1)^2$.  Thus, it follows by counting the number of $\sim_\tau$ equivalence classes in each case and applying Theorem \ref{main-thm} that the number of supercharacters and superclasses of $U_\cP$ is 
\[ \( B_m(q) + \sum_{i=2}^{m} (q-1) N_{m,i}(q) \)\( B_n(q) + \sum_{j=2}^{n} (q-1) N_{n,j}(q)\) + (q-1)B_{m-1}(q)B_{n-1}(q),\] where $B_n(q)$ and $N_{n,i}(q)$ are defined by (\ref{B_q-def}) and (\ref{N_q-def}).    Since $N_{m,1}(q) = B_{m-1}(q)$, it follows from (\ref{N_q-id}) that we can express the number of supercharacters and superclasses just in terms of $q$-Bell numbers as follows:

\begin{proposition} The number of supercharacters and superclasses of the pattern group $U_\cP$, with $\cP$ defined as in (\ref{posets}), is
\be \biggl(B_{m+1}(q)-(q-1)B_{m-1}(q)\biggr)\biggl(B_{n+1}(q)-(q-1)B_{n-1}(q)\biggr) + (q-1) B_{m-1}(q)B_{n-1}(q).\ee% where $q^* = q-1$.
\end{proposition}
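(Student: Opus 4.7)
My plan is to take the raw count of supercharacters and superclasses of $U_\cP$ that has already been assembled from the five-case analysis together with Theorem~\ref{main-thm}, namely
\[ \Bigl( B_m(q) + \sum_{i=2}^{m} (q-1) N_{m,i}(q) \Bigr)\Bigl( B_n(q) + \sum_{j=2}^{n} (q-1) N_{n,j}(q)\Bigr) + (q-1)B_{m-1}(q)B_{n-1}(q), \]
and to simplify it into the desired closed form by invoking the identity (\ref{N_q-id}). The one subtlety is that (\ref{N_q-id}) begins its sum at $i=1$, while the sums above begin at $i=2$ (respectively $j=2$), so I would first isolate the missing $i=1$ term.

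To pin that term down, I would argue directly from the definition (\ref{N_q-def}) that $N_{m,1}(q) = B_{m-1}(q)$: any $\lambda \in \sP_m(q)$ with $\lambda_{1j}=0$ for all $j$ is determined by its restriction to rows and columns indexed by $\{2,\dots,m\}$, and this restriction gives a bijection with $\sP_{m-1}(q)$. (Alternatively, one can use the antitranspose involution of $\sP_m(q)$ noted just below (\ref{N_q-def}) together with $N_{m,m}(q)=B_{m-1}(q)$.) Subtracting $(q-1)N_{m,1}(q)$ from the full identity (\ref{N_q-id}) now gives
\[ B_m(q) + \sum_{i=2}^{m} (q-1) N_{m,i}(q) \;=\; B_{m+1}(q) - (q-1)B_{m-1}(q), \]
and the analogous identity with $n$ in place of $m$. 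Substituting both into the displayed expression above yields the formula in the proposition.

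Because every substantive ingredient is already in place — the orbit counts supplied by Lemma~\ref{example-lemma}, the parametrization provided by Theorem~\ref{main-thm}, and the recurrences and identities (\ref{B_q-def}), (\ref{N_q-def}), (\ref{N_q-id}) — there is no genuine obstacle here; the proposition is pure algebraic bookkeeping on top of the five-case analysis. The only thing that requires a moment of care is matching the sum ranges correctly and verifying the $N_{m,1}(q)=B_{m-1}(q)$ identification, both of which are straightforward from the definitions.
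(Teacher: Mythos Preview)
Your proposal is correct and follows precisely the paper's own argument: after the five-case count is assembled, the paper also notes $N_{m,1}(q)=B_{m-1}(q)$ and applies (\ref{N_q-id}) to rewrite each factor as $B_{m+1}(q)-(q-1)B_{m-1}(q)$ (and similarly for $n$). There is nothing to add.
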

\end{example}

%
%\section{TO DO}

%The following:

%-Introduction

%-Edit

%+Overhaul supernormal pattern groups section

%+Classify all supernormal subgroups of $ U_n(q)$.

%+Decide what to do with set partitions

%+Lemma 6.1

%-Fix example lemma

%+Codimension 1 example

%-Example after every proposition

%+Fill in continuity

%+Complete bibliography

%+Split

%-Applications of codimension 1

%-Combinatorics of alternating pattern groups

\end{document}